\documentclass[10pt,reqno]{article}

\usepackage{amsthm,amsmath,amssymb,amsbsy,amsfonts,graphics,epsfig,graphicx,mathdots,cleveref}

\newcommand{\R}{{\mathbb R}}
\newcommand{\C}{{\mathbb C}}

\newcommand{\Z}{{\mathbb Z}}
\newcommand{\N}{{\mathbb N}}
\newcommand{\Oo}{{\mathcal O}}

\newcommand{\beq}{\begin{equation}}
\newcommand{\eeq}{\end{equation}}

\newtheorem{theo}{{\bf Theorem}}[section]
\newtheorem{cor}[theo]{{\bf Corollary}}
\newtheorem{lem}[theo]{{\bf Lemma}}
\newtheorem{Prop}[theo]{{\bf Proposition}}

\newcounter{definition}
\newenvironment{defn}[1][]{\refstepcounter{definition}\par\medskip
	\noindent \textbf{Definition~\thedefinition. #1} \rmfamily}{\medskip}

\newcounter{example}
\newenvironment{ex}[1][]{\refstepcounter{example}\par\medskip
   \noindent \textbf{Example~\theexample. #1} \rmfamily}{\medskip}
   
\newcounter{remark}   
\newenvironment{rem}[1][]{\refstepcounter{remark}\par\medskip
   \noindent \textbf{Remark~\theremark. #1} \rmfamily}{\medskip}

\numberwithin{equation}{section}

\author{A. Bourget and T. McMillen\footnote{\textbf{Mailing address}: Department of Mathematics, California State University (Fullerton),
McCarthy Hall 154, Fullerton CA 92834 (US). \textbf{Email address}: abourget@fullerton.edu} \\ Department of Mathematics \\ California State University, Fullerton}

\title{Stability of Doubly Indexed Discrete Linear Systems}

\date{}

\begin{document}

\maketitle

\begin{abstract}
	We study the stability of the two-dimensional discrete dynamical system defined by 
	\[ x_{i,j} = A x_{i-1,j} + B x_{i,j-1},\]
	where $A, B \in \mathcal{M}_n(\mathbb{C})$.  Although this system—commonly known as the Fornasini--Marchesini model---has been examined for decades, several key stability results have lacked rigorous proofs. This paper establishes precise and self-contained criteria for Lyapunov, asymptotic, and neutral stability under general bounded boundary conditions. In particular, we prove that asymptotic stability is equivalent to the spectral condition $ \rho(z_1 A + z_2 B) < 1 $ for all $ |z_1| \leq 1,  |z_2|  \leq 1 $. We further analyze the commutative case, derive recurrence relations for anti-diagonal sums, and present new examples and counterexamples illustrating the nuances of stability. The results are complemented by sufficient conditions based on matrix norms, Lyapunov functions, and the joint spectral radius.
\end{abstract}

\noindent \textbf{Keywords}: Two-dimensional discrete dynamical systems, Stability (Lyapunov, asymptotic, neutral).\\

\noindent \textbf{MSC 2020}: 93D05, 93C55  

\section{Introduction}

Let $\mathcal{M}_n(\C)$ denote the set of all $ n \times n $ complex matrices, and let $ A, B \in \mathcal{M}_n(\C)$. We consider the doubly indexed discrete dynamical system defined by the recurrence relation
\begin{equation} \label{2D FM}
x_{i,j} = A x_{i-1,j} + B x_{i,j-1} \qquad (i, j \in \mathbb{N}),
\end{equation}
subject to prescribed boundary conditions $ \{x_{i,0}\} $ and $ \{x_{0,j}\} $. This two-dimensional recurrence defines a system whose evolution proceeds along two independent directions and is thus commonly referred to as a 2D system.

The study of such systems originated in the 1970s with the pioneering work of Fornasini and Marchesini \cite{FoMa78, FoMa80}, who introduced state-space models for multidimensional systems. These so-called Fornasini--Marchesini (FM) systems have since found widespread applications in engineering and applied mathematics, including digital signal processing, image analysis, and multidimensional control~\cite{Bo82,Ka85, GaRo18, TiDh12}. 

Their mathematical structure generalizes the classical one-dimensional system $x_{k+1}=Ax_k$, whose asymptotic behavior is completely characterized by the spectral radius $\rho(A)$ of the transition matrix  $A$.  In the 1D setting, the classification of stability is well established:
\begin{itemize}
\item $x_k \rightarrow 0$ if $\rho(A)<1$, 
\item $x_k$ is bounded but does not tend to zero for some $x_0$ if $\rho(A)=1$ and all eigenvalues with $|\lambda|=1$ are semisimple, 
\item $\|x_k\| \rightarrow \infty$ for some $x_0$ if $\rho(A)>1$, or if $\rho(A) = 1$ and $A$ has an eigenvalue with $|\lambda|=1$ that is not semisimple. 
\end{itemize}

By contrast, the analysis of stability in 2D systems is significantly more intricate. One major complication arises from the influence of boundary data along two directions, which interact in a nontrivial way through the recurrence. Moreover, different notions of convergence  can lead to different classifications of stability. The literature often assumes special classes of boundary conditions—for example, sequences that vanish eventually or decay at infinity—leading to simplifications that obscure the full complexity of the system’s behavior.

We take a general, foundational approach: we assume only bounded boundary data and use Pringsheim convergence (i.e., $x_{i,j} \to 0$ as $i,j \to \infty$ independently). This setting cleanly separates Lyapunov, asymptotic, and neutral stability, and makes explicit how these notions depend on the chosen boundary class and mode of convergence.
 
Our primary objective is to provide a complete and rigorous framework for analyzing the stability of 2D systems, filling gaps that persist in prior treatments. While some aspects of this system have been studied in the past, many arguments lack full mathematical rigor, and certain key results are stated without proof or only under restrictive assumptions. In particular, to the best of our knowledge, the spectral radius condition

\begin{equation*}  
\rho(z_1A+z_2B) < 1 \qquad \text{for all } |z_1| \leq 1,  |z_2| \leq  1
\end{equation*}
does not appear to have been stated in the literature as a necessary and sufficient condition for the asymptotic stability of the origin. This condition, which generalizes the 1D spectral criterion to the 2D setting, plays a central role in our analysis and leads to a precise characterization of the system's long-term behavior.

We also investigate several structural features of the system, including closed-form solutions via generalized matrix polynomials, recurrence relations along anti-diagonals, and norm-based and joint spectral radius-based sufficient conditions for stability. Our results apply in both the commutative case (when  $A$ and $B$ commute) and the noncommutative case, with sharp criteria for when the origin is asymptotically stable, neutrally stable, or unstable. We provide concrete examples—some new, others correcting existing misconceptions in the literature—that illustrate the delicate interplay between the spectral properties of $A$ and $B$ and the system’s global behavior.

A key theme throughout the paper is the contrast between the commutative and noncommutative settings. When $A$ and $B$ commute, the dynamics can often be reduced, after a similarity transformation, to lower-dimensional subsystems whose behavior is governed by paired spectral data; in the diagonalizable case, $A$ and $B$ are simultaneously diagonalizable, and stability reduces to scalar conditions on the corresponding eigenvalue pairs. More generally, commutativity yields a common invariant decomposition (cf.\ Lemma~4.3), which effectively decouples the analysis into blocks. In the noncommutative case, no such simultaneous reduction is available: eigendirections need not be compatible, and stability is controlled by the joint behavior of the pencil $z_1A+z_2B$ over the torus $|z_1|=|z_2|=1$. This distinction explains why the commutative case admits a more explicit classification, whereas the general case requires genuinely multidimensional spectral criteria.

The structure of the paper is as follows:
\begin{itemize}
	\item In Section~2, we introduce the stability notions used throughout the paper and fix the functional setting: bounded boundary data together with Pringsheim convergence in the interior of the lattice. We also explain how this setting differs from the commonly used $c_0$-boundary framework and from convergence of the whole array as $i+j\to\infty$.
	
    \item Section~3 establishes an explicit representation formula for the solution $x_{i,j}$ in terms of the boundary data and the noncommutative word sums $W_{i,j}(A,B)$. This formula is used to relate stability to the two one-axis contributions from the boundary data. We also derive a recurrence for the anti-diagonal sums, which yields a simple instability criterion when $\rho(A+B)>1$.
   \item In Section 4, we analyze the commutative case in detail. Compared with earlier work on asymptotic stability for commuting Fornasini--Marchesini models, we do not assume that one of the matrices is diagonalizable, and we obtain a full classification into asymptotic, neutral, and unstable regimes in the bounded-boundary/Pringsheim setting.
   
   \item Section 5 treats the general noncommutative case, includes a rigorous proof of the main spectral condition for asymptotic stability, and presents several sufficient conditions for Lyapunov and asymptotic stability using matrix norms, numerical radius, Lyapunov functions, and the joint spectral radius.
   \item Section 6 discusses BIBO-type estimates for $\ell^1$ boundary data.
  \end{itemize}
   
Throughout the paper, we aim to develop a rigorous and accessible framework that clarifies the behavior of this rich and subtle class of dynamical systems. We conclude in Section 7 with remarks on natural generalizations, including multi-directional systems and the inclusion of diagonal terms, and outline several directions for future research.

\section{Definitions}

In this section, we introduce the stability notions used throughout the paper. Our formulation follows the standard approach in dynamical systems (see, e.g., Strogatz~\cite{st94}), adapted to the doubly indexed setting and to boundary data prescribed along the coordinate axes.

Let $A,B\in \mathcal{M}_n(\C)$. We denote by $\mathcal{S}(A,B)$ the two-dimensional discrete dynamical system
\begin{equation} \label{S(A,B)}
	x_{i,j}=Ax_{i-1,j}+Bx_{i,j-1}, \qquad (i,j\in\N),
\end{equation}
together with boundary conditions $\{x_{i,0}\}_{i\ge 1}$ and $\{x_{0,j}\}_{j\ge 1}$.
Throughout the paper we assume
\[  \{x_{i,0}\}_{i\ge 1},\ \{x_{0,j}\}_{j\ge 1}\in \ell^\infty(\N;\C^n). \]
The vector $x_{0,0}$ may be chosen arbitrarily -- unless otherwise stated, we set $x_{0,0}=0$.    

Given such boundary data, the recurrence \eqref{S(A,B)} determines a unique array $\{x_{i,j}\}_{i,j\ge 0}$ by forward propagation on the lattice. In the present paper, we work primarily with bounded boundary data and use Pringsheim convergence for the interior trajectory: the statement $x_{i,j}\to 0$ as $i,j\to\infty$ means that for every $\varepsilon>0$ there exists $N\in\N$ such that
\[ \|x_{i,j}\|<\varepsilon \qquad \text{whenever } i\ge N \text{ and } j\ge N.\]

Throughout the paper, we use the following notation for the torus and the closed bidisk:
\[ \mathbb T=\{z\in\mathbb C: |z|=1\}, \qquad \mathbb T^2=\mathbb T\times\mathbb T, \]
and
\[ \overline{\mathbb D}=\{z\in\mathbb C: |z|\le 1\}, \qquad \overline{\mathbb D}^{\,2} =  \overline{\mathbb D} \times  \overline{\mathbb D}. \]

Unless stated otherwise, stability of $\mathcal{S}(A,B)$ refers to stability of the fixed point $x=0$ relative to this boundary class and this mode of convergence.

\begin{defn}  \label{def:stability}
	The system ${\cal S}(A,B)$ is said to be:
	
	\begin{enumerate}
		\item[(a)] \emph{Lyapunov stable} if for every $\varepsilon>0$, there exists $\delta = \delta(\varepsilon)>0$ such that the corresponding solution satisfies
		\[ \|x_{i,j}\|<\varepsilon \qquad \text{for all } i,j\in\mathbb N \]
		whenever 
		\[ \sup_{i\geq 1}\|x_{i,0}\|<\delta \quad\text{and}\quad \sup_{j\geq 1}\|x_{0,j}\|<\delta . \]

		\item[(b)] \emph{Attractive} if there exists $\delta>0$ such that, for each
		admissible boundary data satisfying
		\[ \sup_{i\geq 1}\|x_{i,0}\|<\delta \quad\text{and}\quad \sup_{j\geq 1}\|x_{0,j}\|<\delta ,\]
		the corresponding solution satisfies
		\[ \lim_{i,j\to\infty} x_{i,j}=0 \]
		in the sense of Pringsheim convergence. That is,  for every $\varepsilon>0$, there exists an integer $N$, possibly depending on the boundary data and on
		$\varepsilon$, such that
		\[ \|x_{i,j}\|<\varepsilon \qquad \text{whenever } i\geq N \text{ and } j\geq N . \]
			
		\item[(c)] \emph{Asymptotically stable} if it is Lyapunov stable and attractive.
		
		\item[(d)] \emph{Neutrally stable} if it is Lyapunov stable but not attractive.
		
		\item[(e)] \emph{Unstable} if it is not Lyapunov stable.
	\end{enumerate}
\end{defn}

\begin{rem} We make several comments regarding the above definitions and the choice of framework for the system $\mathcal{S}(A,B)$. 
\begin{enumerate}
	\item The norm $\|\cdot\|$ in Definition~\ref{def:stability} is an arbitrary norm on $\C^n$; since all norms on $\C^n$ are equivalent, the stability notions are independent of this choice. On occasion, it will be convenient to use the max norm
	\[ 	\|x\|_\infty=\max\{|x_k|:k=1,\dots,n\}. \]
	All matrix norms are assumed to be induced (and hence submultiplicative).
	
	 \item The stability notions considered here should not be interpreted as properties of a single prescribed boundary data. Rather, they are properties of the system relative to a specified class of admissible boundary data and a specified regime of convergence at infinity. Once this functional setting is fixed, the property is determined by the matrices defining the system; however, changing the admissible boundary-data class or the convergence regime may change the resulting stability condition.  See Remark~\ref{rem4} for further discussion. 
    
    \item The convergence requirement used in the present paper is Pringsheim convergence, which focuses on the interior asymptotic regime $x_{i,j}\to 0$ as both indices tend to infinity. This is weaker than requiring  convergence of the whole array as $i+j\to\infty$, a condition often used in the literature, for instance in $c_0$-type settings~\cite{Ba18, Ba19,BaClRiSiYe23, Ba23,BaCl23}. The two notions are not directly comparable unless both the admissible boundary class and the mode of convergence are specified. In particular, if one requires $x_{i,j}\to 0$ as $i+j\to\infty$ for the whole array, then non-vanishing boundary data are incompatible with attractivity, since the boundary values themselves are part of the array. Thus, that stronger convergence regime is naturally paired with boundary data that vanish at infinity, for instance compactly supported data or, more generally, data in $c_0$. By contrast, Pringsheim convergence is well suited to the bounded-boundary setting adopted here, since it isolates the interior behavior  of the solution from possible non-decay along the coordinate axes.

    \item Fornasini and Marchesini~\cite{FoMa78, FoMa80} investigated related 2D systems under both forward and backward evolution (with indices in $\Z$ rather than $\N$), and considered boundary conditions prescribed along a diagonal (e.g. $\{x_{-r,r}\}$) rather than along the axes.
  
	\item By choosing boundary conditions supported on a single axis, it is straightforward to verify that if either of the one-dimensional systems associated with $A$ or $B$ is unstable, then the two-dimensional system $\mathcal{S}(A,B)$ is also unstable. For instance, assume that $x_{i,0}=0$ for all $i\ge 1$. Then
	\[  x_{i+1,1}=Ax_{i,1}, \]
	which shows that $\mathcal{S}(A,B)$ restricted to the first column reduces to the one-dimensional system associated with $A$. Similarly, imposing the boundary conditions $x_{0,j}=0$ for all $j\ge 1$, we obtain
	\[ x_{1,j+1}=Bx_{1,j}, \]
	which corresponds to the one-dimensional system associated with $B$. As several examples later in the paper show, however, the asymptotic or Lyapunov stability of $\mathcal{S}(A,B)$ cannot, in general, be inferred from the stability properties of the corresponding one-dimensional systems.
	\end{enumerate}
\end{rem}

\begin{rem} \label{rem2}
	Throughout the paper we will use two elementary invariance properties: the stability type of $\mathcal{S}(A,B)$ is unchanged under unimodular rescaling and similarity transformations.
\begin{enumerate}
	\item  	If $(z_1,z_2)\in\mathbb{T}^2$, then $\mathcal{S}(A,B)$ and $\mathcal{S}(z_1A,z_2B)$ have the same stability type. Indeed, if $\{x_{i,j}\}$ is a solution of $\mathcal{S}(A,B)$, define
	\[ x'_{i,j}=z_1^i z_2^j x_{i,j}. \]
	Then $\{x'_{i,j}\}$ satisfies
	\[ x'_{i,j}=z_1A x'_{i-1,j}+z_2B x'_{i,j-1}, \]
	with bounded boundary data
	\[ x'_{i,0}=z_1^i x_{i,0}, \qquad x'_{0,j}=z_2^j x_{0,j}. \]
	
	\item If $X$ is invertible, then $\mathcal{S}(A,B)$ and 
	\[\mathcal{S}\left(XAX^{-1},XBX^{-1}\right) \] 
	have the same stability type. Indeed, $y_{i,j} = Xx_{i,j}$ satisfies
	\[ y_{i,j} = XAX^{-1} y_{i-1,j} + XBX^{-1}y_{i,j-1}.\]
	In both cases, the transformation is bijective and preserves bounded boundary data as well as convergence, boundedness, and unboundedness of solutions. Hence the stability type is unchanged.
\end{enumerate}
\end{rem}

\section{Preliminary results}

We begin with an explicit representation formula for the solution $x_{i,j}$ in terms of the boundary data on the two coordinate axes. Related formulas appear in Proposition 2.1 of~\cite{Ba23} and Theorem 5 of~\cite{FoMa80}. Our purpose here is to record the precise form needed below: the formula is written directly in terms of the boundary sequences $\{x_{i,0}\}$ and $\{x_{0,j}\}$, and the coefficients are expressed as the noncommutative word sums $W_{i,j}(A,B)$.

\begin{Prop} \label{BigForm}
	For every $i,j \in \mathbb{N}$, the solution of \eqref{S(A,B)} satisfies
	\begin{equation} \label{BigFormEqn}
		x_{i,j} = \sum_{k=0}^{j-1} W_{i-1,k}(A,B)\, A x_{0,j-k} + \sum_{k=0}^{i-1} W_{k,j-1}(A,B)\, B x_{i-k,0},
	\end{equation}
	where $W_{i,j}(A,B)$ denotes the sum of all words in the letters $A$ and $B$ containing exactly $i$ copies of $A$ and $j$ copies of $B$. If $A$ and $B$ commute, then
	\[ W_{i,j}(A,B)=\binom{i+j}{i}A^iB^j. \]
\end{Prop}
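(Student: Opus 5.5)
Since $x_{0,0}=0$, the right-hand side of \eqref{S(A,B)} is linear in the boundary data. I will prove \eqref{BigFormEqn} by induction on $i+j$, using only the word-sum recurrence
\[ W_{p,q}(A,B)=A\,W_{p-1,q}(A,B)+B\,W_{p,q-1}(A,B), \]
valid for $p,q\ge 0$ with the conventions $W_{0,0}=I$ and $W_{p,q}=0$ if $p<0$ or $q<0$. This recurrence is immediate from classifying each word by its first letter: a word with $p$ copies of $A$ and $q$ copies of $B$ either begins with $A$, followed by a word with $p-1$ copies of $A$ and $q$ of $B$, or begins with $B$, followed by a word with $p$ copies of $A$ and $q-1$ of $B$.

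\emph{Base case} $i=j=1$. The recurrence gives $x_{1,1}=Ax_{0,1}+Bx_{1,0}$, and the formula gives $W_{0,0}Ax_{0,1}+W_{0,0}Bx_{1,0}$, which is the same.

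\emph{Inductive step.} Take $i,j\ge 1$ with $i+j\ge 3$, and write $x_{i,j}=Ax_{i-1,j}+Bx_{i,j-1}$. The case $i=1$ (or $j=1$) has to be handled carefully, since then $x_{0,j}$ is boundary data rather than given by the formula. I would check that the formula, read at $i-1=0$ with the conventions above, still produces $x_{0,j}$. The first sum becomes $\sum_k W_{-1,k}Ax_{0,j-k}=0$, which is wrong, so I will not use the formula there. Instead, in the case $i=1$ I will substitute $x_{0,j}$ directly and verify that the result matches: the $k=0$ term $W_{0,0}Ax_{0,j}$ of the first sum supplies exactly $Ax_{0,j}$.

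For $i,j\ge 2$, substitute the formula for $x_{i-1,j}$ and $x_{i,j-1}$. In the $x_{0,\cdot}$ terms:
\begin{itemize}
\item $A x_{i-1,j}$ contributes $\sum_{k=0}^{j-1} A W_{i-2,k}\,A x_{0,j-k}$;
\item $B x_{i,j-1}$ contributes $\sum_{k=0}^{j-2} B W_{i-1,k}\,A x_{0,j-1-k}$. Reindexing with $k\mapsto k-1$ turns this into $\sum_{k=1}^{j-1} BW_{i-1,k-1}Ax_{0,j-k}$.
\end{itemize}
Adding these and using the word recurrence with $W_{i-1,-1}=0$ gives $\sum_{k=0}^{j-1}W_{i-1,k}Ax_{0,j-k}$. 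The $x_{\cdot,0}$ terms are handled symmetrically. The edge cases $i=1$ or $j=1$ go the same way once the boundary value is inserted directly, as described above.

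\emph{Main obstacle.} The main difficulty is purely bookkeeping: handling the reindexing and the edge terms consistently, so that the boundary contributions are counted neither twice nor zero times.

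\emph{Commutative case.} When $A$ and $B$ commute, every word with $i$ copies of $A$ and $j$ copies of $B$ equals $A^iB^j$. There are $\binom{i+j}{i}$ such words, corresponding to the choice of positions of the $A$'s, so $W_{i,j}(A,B)=\binom{i+j}{i}A^iB^j$.
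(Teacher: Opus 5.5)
Your proof is correct, and it takes a different route from the paper's.

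The paper works in the ring of formal power series. It writes $(I-z_1A-z_2B)X=U(z_1)+V(z_2)$ and expands $(I-z_1A-z_2B)^{-1}=\sum_{i,j}W_{i,j}z_1^iz_2^j$ as a Neumann series. It then reads off $x_{i,j}$ as a convolution of the $W$'s with the differences $x_{k,0}-Ax_{k-1,0}$ and $x_{0,k}-Bx_{0,k-1}$. Finally it telescopes these sums into the stated form using the last-letter recurrence $W_{i,j}=W_{i-1,j}A+W_{i,j-1}B$.

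You instead verify the formula directly by induction on $i+j$, using the first-letter recurrence $W_{p,q}=AW_{p-1,q}+BW_{p,q-1}$. That is the natural choice, since $A$ and $B$ act on the left in $Ax_{i-1,j}+Bx_{i,j-1}$. Your reindexing and your treatment of the edge cases $i=1$ and $j=1$ both check out.

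What each approach buys:
\begin{itemize}
\item Your argument is shorter and entirely elementary. It also shows that $x_{0,0}$ never enters the computation, so your opening appeal to $x_{0,0}=0$ is unnecessary.
\item Your argument only verifies a formula that must already be known. The paper's argument derives it.
\item More importantly, the paper's route sets up the identity $X=(I-z_1A-z_2B)^{-1}(U+V)$. That identity is reused in the proofs of Theorem~\ref{unstabilityTH}(b) and the Kreiss-type theorem.
\item The same Neumann-series identification of the Taylor coefficients of $(I-z_1A-z_2B)^{-1}$ with $W_{r,s}(A,B)$ reappears in the proof of Theorem~\ref{bigTH}.
\end{itemize}

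One small imprecision: your word recurrence is not valid at $(p,q)=(0,0)$. There $W_{0,0}=I$, while the right-hand side vanishes under your conventions. This is exactly why the $k=0$ boundary term has to be inserted by hand when $i=1$ or $j=1$, which you correctly do.
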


\begin{proof}
	For simplicity, we denote $W_{i,j}(A,B)$ by $W_{i,j}$ throughout the proof. We work first in the ring of formal power series in $z_1,z_2$ with coefficients in $\C^n$. Thus, the following generating-function manipulations are purely algebraic, and no convergence is being assumed. Define formally 
	\begin{equation} \label{genfun1}
	X(z_1, z_2) = \sum_{i,j \geq 0} x_{i,j} z_1^i z_2^j
	\end{equation}
	Note that
	\begin{align*} 
		z_1 AX & = \sum_{i,j \geq 0} A x_{i,j} z_1^{i+1} z_2^j = \sum_{i\geq 1, j\geq 0} Ax_{i-1,j} z_1^i z_2^j	\\
		z_2 B X & = \sum_{i,j \geq 0} Bx_{i,j} z_1^i z_2^{j+1} = \sum_{i\geq 0, j\geq 1} Bx_{i,j-1} z_1^i z_2^j
	\end{align*}
	It then follows from \eqref{S(A,B)} that 
	\begin{align} \label{U and V}
	X - z_1 A X- z_2 B X & =  \sum_{i \geq 1} (x_{i,0}-Ax_{i-1,0}) z_1^i + \sum_{j \geq 1} (x_{0,j}-Bx_{0,j-1}) z_2^j   \nonumber \\
       & =: U(z_1) + V(z_2) 
    \end{align}   	
Since the constant term of $I-z_1A-z_2B$ is $I$, it is invertible in the formal power series ring, with inverse interpreted formally as the Neumann series
\[ (I-z_1 A -z_2 B)^{-1} =  \sum_{n \geq 0} (z_1A+z_2B)^n. \] 
As a consequence, 
\begin{equation} \label{X(z1,z2)}
  X(z_1,z_2) = (I -z_1 A -z_2 B)^{-1} (U(z_1) + V(z_2)). 
\end{equation}

We multiply the formal power series for the resolvent of $z_1 A +z_2 B$ and $U(z_1)$ using the Cauchy product to obtain 
\begin{align} \label{U}
	(I -z_1 A -z_2 B)^{-1} U(z_1) & = \sum_{n \geq 0} (z_1 A +z_2 B)^n \ \sum_{i \geq 1} (x_{i,0}-Ax_{i-1,0}) \, z_1^i \nonumber \\
	  & = \sum_{i,j \geq 0} W_{i,j} z_1^i z_2^j \  \sum_{i \geq 1} (x_{i,0}-Ax_{i-1,0}) \, z_1^i \nonumber \\
	  & = \sum_{i,j \geq 0} \sum_{k=1}^i W_{i-k,j}(x_{k,0}-Ax_{k-1,0}  ) z_1^i z_2^j
\end{align}
By a similar argument applied to  $V(z_2)$, we obtain
\begin{align} \label{V}
	(I -z_1 A -z_2 B)^{-1} V(z_2) & =  \sum_{i,j \geq 0} \sum_{k=1}^j W_{i,j-k}(x_{0,k}-Bx_{0,k-1}  ) z_1^i z_2^j.
\end{align}

Comparing the coefficients of $z_1^i z_2^j$ in the expressions  \eqref{genfun1}, \eqref{U} and \eqref{V}, it follows that
\begin{align} \label{xij}
	x_{i,j} = \sum_{k=1}^i W_{i-k,j}(x_{k,0}-Ax_{k-1,0}  ) +  \sum_{k=1}^j W_{i,j-k}(x_{0,k}-Bx_{0,k-1}  )
\end{align}
We now use the combinatorial identity 
\[  W_{i,j} = W_{i-1,j} \, A + W_{i,j-1} \, B \] 
in the first sum above to obtain
\begin{align} \label{xij1}
	\sum_{k=1}^i  W_{i-k,j} (x_{k,0}-Ax_{k-1,0} ) & = \sum_{k=1}^i \left(  W_{i-k,j} x_{k,0} - W_{i-k,j} A x_{k-1,0} \right) \nonumber \\
	   & = \sum_{k=1}^i  W_{i-k,j} x_{k,0}  - \sum_{k=1}^i W_{i-k+1,j} x_{k-1,0}  \nonumber  \\
	   & \quad + \sum_{k=1}^i W_{i-k+1,j-1} B x_{k-1,0} \nonumber  \\
	   & = W_{0,j} x_{i,0} +\sum_{k=1}^{i-1} W_{k,j-1} B x_{i-k,0} \nonumber \\
	   & = \sum_{k=0}^{i-1} W_{k,j-1} B x_{i-k,0} 
\end{align}
Similarly,  the second sum 
\begin{align} \label{xij2}
	 \sum_{k=1}^j W_{i,j-k}(x_{0,k}-Bx_{0,k-1}  ) = \sum_{k=0}^{j-1} W_{i-1,k} A x_{0,j-k}
\end{align}
	Replacing the sums in \eqref{xij} by \eqref{xij1} and \eqref{xij2}, we finally obtain
\begin{equation*}
	x_{i,j} = \sum_{k=0}^{j-1} W_{i-1,k} A x_{0,j-k} + \sum_{k=0}^{i-1} W_{k,j-1} B x_{i-k,0} 
\end{equation*}	
as desired. Finally, if $A$ and $B$ commute, every word containing $i$ copies of $A$ and $j$ copies of $B$ equals $A^iB^j$. Since there are $\binom{i+j}{i}$ such words, we have
\[ W_{i,j}(A,B)=\binom{i+j}{i}A^iB^j. \]
This completes the proof.
\end{proof}

\begin{rem} \label{rem1}
	As a straightforward consequence of Proposition~\ref{BigForm}, together with the fact that the boundary data may be chosen independently on the two axes, the system admits the following characterization. In particular, when testing one of the two contributions in \eqref{BigFormEqn}, the boundary data on the other axis may be set identically equal to zero, so no cancellation between the two sums is involved.
	\begin{enumerate}
		\item $\mathcal{S}(A,B)$ is \emph{asymptotically stable} if and only if both sums on the right-hand side of equation \eqref{BigFormEqn} converge to zero as $i,j \to \infty$, for all admissible boundary data.
		
		\item $\mathcal{S}(A,B)$ is \emph{neutrally stable} if and only if both sums in equation~\eqref{BigFormEqn} remain bounded for every admissible boundary data, while for some admissible boundary data at least one of the two contributions does not tend to zero.
		
		\item $\mathcal{S}(A,B)$ is \emph{unstable} if and only if one of the two one-axis contributions in equation \eqref{BigFormEqn} is unbounded for some admissible boundary data. Equivalently, instability can be detected by choosing boundary data on one axis and taking the data on the other axis to be identically zero.
	\end{enumerate}
\end{rem}

\medskip

Since we do not require the boundary conditions to vanish at infinity and we adopt Pringsheim convergence, our definition of attractivity does not \emph{a priori} rule out the possibility that \(x_{i,j}\) becomes unbounded along a fixed vertical line \((i=\mathrm{constant})\) or a fixed horizontal line \((j=\mathrm{constant})\), while still satisfying \(x_{i,j}\to 0\) as \(i,j\to\infty\). In the \(c_0\)-boundary setting with convergence taken as \(i+j\to\infty\), attractivity is known to imply asymptotic stability for linear two-dimensional Roesser and Fornasini--Marchesini models; see \cite{Ba18}, and also \cite{BaClRiSiYe23} for related connections between stability notions. In Theorem \ref{theo:att Lyap} below, we prove the analogous implication in the bounded-boundary/Pringsheim framework used here. The proof below is independent of the arguments in these references and uses the explicit representation formula \eqref{BigFormEqn}.

We first prove two auxiliary lemmas, both of which are useful in their own right. The first lemma shows that attractivity of $S(A,B)$ forces attractivity of the one-dimensional systems generated by $A$ and by $B$. 

\begin{lem} \label{lem:Attractive}
	If the system ${\cal S}(A,B)$ is attractive, then so are the 1D systems generated by $A$ and $B$.
\end{lem}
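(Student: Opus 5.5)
We argue by contraposition, using the single-axis reduction from the Remark in Section 2. The notion of attractivity for the 1D system $y_{k+1}=Ay_k$ is the usual one: $A^k y_0\to 0$ for all $y_0$ of small norm. By linearity this is equivalent to $\rho(A)<1$. So suppose $\rho(A)\ge 1$, and pick an eigenpair $Av=\lambda v$ with $|\lambda|\ge 1$ and $v\neq 0$. We will build admissible boundary data, of arbitrarily small sup norm, whose solution does not tend to zero in the Pringsheim sense. This contradicts attractivity of $\mathcal S(A,B)$. The case of $B$ follows by the symmetric argument with the roles of the axes exchanged.

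Simply taking $x_{i,0}=0$ and $x_{0,1}=\delta v$, with all other boundary values zero, only controls the column $j=1$: there $x_{i+1,1}=Ax_{i,1}$. Pringsheim convergence says nothing about a fixed row or column. So we need data that keeps the $A$-direction alive for all large $j$. The natural choice is $x_{0,j}=c\,v$ for every $j\ge 1$ and $x_{i,0}=0$ for all $i\ge 1$, with $\|cv\|<\delta$. The $B$-contributions then mix $v$ with other directions, so we project out with a left eigenvector. Take $w^*$ with $w^*A=\lambda w^*$ and, for the moment, set $s_{i,j}=w^*x_{i,j}$. Applying $w^*$ to the recurrence gives
\[ s_{i,j}=\lambda s_{i-1,j}+w^*Bx_{i,j-1}, \]
which does not close unless $w^*$ is also a left eigenvector of $B$. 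This is the main obstacle: in the noncommutative case there is no common eigenvector.

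To get around it, I would run the argument through the representation formula \eqref{BigFormEqn} and seek a contradiction with the uniform structure. Use the bounded data $x_{0,j}=c\,v$ for $j\le M$ and $x_{0,j}=0$ for $j>M$. For this data, $x_{i,j}$ with $j>M$ is a finite combination, and the solution restricted to rows $j\ge M$ is again a solution of $\mathcal S(A,B)$ started from row $M$. The row data $x_{i,M}$ then plays the role of boundary data on a shifted axis, and the column $i=0$ carries zeros.

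A cleaner route avoids this altogether: exploit the freedom in $x_{0,0}$ together with a shift of origin. Fix $N$ and define boundary data by $x_{0,j}=0$ for all $j$, and $x_{i,0}=0$ except $x_{1,0}$ chosen so that the solution contains an $A$-orbit along a diagonal shift. Concretely, the solution started at a single point $x_{1,0}=u$ gives
\[ x_{i,j}=W_{i-1,j-1}Bu, \]
and the diagonal terms $W_{k,k}$ should grow whenever $\rho(A)\ge1$. If that fails, I would instead take a superposition of such point impulses placed at $x_{m,0}$ for $m=1,\dots,L$ with phases chosen so that the contributions add coherently at a chosen point $(i,j)$ with $i,j\ge N$. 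This forces $\|x_{i,j}\|\ge \eta>0$ at points with arbitrarily large $i,j$.

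The hard step is precisely this coherence argument. To finish it, I would reduce $\rho(A)\ge1$ to a lower bound on $\sum_k \|W_{k,j-1}B\|$ that does not vanish as $i,j\to\infty$. By the $\ell^\infty$-$\ell^1$ duality implicit in Remark~\ref{rem1}, item 1, this yields bounded data violating Pringsheim convergence.
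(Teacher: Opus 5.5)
Your proposal does not yet contain a proof. The decisive step is explicitly deferred (``the hard step is precisely this coherence argument''), and the route you commit to for it uses the wrong axis. Impulses placed at $x_{m,0}$ lie on the $i$-axis. By \eqref{BigFormEqn} they reach $x_{i,j}$ only through $W_{k,j-1}(A,B)B$, which carries a factor of $B$. Take $A=I$, $B=0$, where the 1D system of $A$ is not attractive. Every such impulse then produces $x_{i,j}=0$ for $j\ge 1$; moreover $W_{k,k}=0$ for all $k\ge 1$ and $\sum_k\|W_{k,j-1}B\|=0$. So two of your claims are false in general: that $W_{k,k}$ grows when $\rho(A)\ge 1$, and that $\sum_k\|W_{k,j-1}B\|$ has a non-vanishing lower bound. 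The $A$-dynamics is driven by the data on the other axis: $x_{i,j}=\sum_{k=0}^{j-1}W_{i-1,k}(A,B)Ax_{0,j-k}$. Its $k=0$ term is $A^ix_{0,j}$, which equals $\lambda^i x_{0,j}$ when $x_{0,j}$ is a multiple of your eigenvector $v$. On that axis the lower bound you were after is immediate.

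What is then genuinely missing is the passage from a lower bound at each point to a single bounded datum. Duality only gives, for each $(i,j)$, a datum that depends on $(i,j)$. Failure of Pringsheim convergence needs one datum with $\|x_{i,j}\|\ge\eta$ along a sequence of points with both indices tending to infinity, and Remark~\ref{rem1} provides nothing of this sort. The paper supplies exactly this with a gliding-hump construction:
\begin{itemize}
\item Set $x_{i,0}=0$, and $x_{0,j_k}=\lambda_k v$ along a sparse increasing sequence $j_k$, with all other $x_{0,j}$ zero. Pair the $j_k$ with a sequence $i_k\to\infty$.
\item By causality, $x_{i_k,j_k}=\lambda_kA^{i_k}v+S_k$, where $S_k$ depends only on $\lambda_1,\dots,\lambda_{k-1}$. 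Later impulses never affect $x_{i_k,j_k}$.
\item Choose the unimodular $\lambda_k$ recursively so that $\|S_k+\lambda_kA^{i_k}v\|\ge\|A^{i_k}v\|$. The paper averages over phases; $\lambda_k\in\{1,-1\}$ already suffices, since $\|S+w\|+\|S-w\|\ge 2\|w\|$.
\item This gives $\|x_{i_k,j_k}\|\ge\|v\|$ for every $k$. After scaling $v$ to have norm below $\delta$, this contradicts attractivity.
\end{itemize}
Your ``coherent superposition'' idea is close in spirit, but coherence at one chosen point is not enough. The missing ingredient is the recursive choice along a sequence of points, made consistent by causality. Alternatively, your duality idea can be made rigorous via Schur's theorem that weakly null sequences in $\ell^1$ are norm null. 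The functionals $\xi\mapsto x_{i_k,j_k}$ have finitely supported kernels of norm at least $\|A^{i_k}\|\ge 1$, so they cannot converge to zero pointwise on $\ell^\infty$. That argument, however, has to be invoked explicitly.
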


\begin{proof}
	We prove the assertion for \(A\); the proof for \(B\) is analogous. Suppose, toward a  contradiction, that $A^iv \not\to 0$ for some $v \in \C^n$.  Then there exists $\varepsilon>0$ and a sequence $i_k \to \infty$ such that 
	\begin{equation*}
		\|A^{i_k}v\|\ge \varepsilon \qquad \text{for all } k .
	\end{equation*}
	Choose a strictly increasing sequence  \(  j_k \to \infty \). We prescribe boundary data on the \(j\)-axis only by setting  
	\[ x_{i,0}=0 \qquad (i\ge 1), \]
	and
	\begin{equation*}
		x_{0,j_k}=\lambda_k v,\qquad |\lambda_k|=1,
	\end{equation*}
	with all other \(x_{0,j}, \ (j\neq j_k)\) equal to zero. The constants \(\lambda_k\) will be chosen recursively. By \eqref{BigFormEqn}, at the point $(i_k,j_k)$, we have 
	\begin{align*}
		x_{i_k,j_k}
		&= \sum_{\ell=0}^{j_k-1} W_{i_k-1,\ell}(A,B) A x_{0,j_k-\ell} \\
		&= W_{i_k-1,0}(A,B)A x_{0,j_k}
		+ \sum_{\ell=1}^{j_k-1} W_{i_k-1,\ell}(A,B) A x_{0,j_k-\ell} \\
		&= \lambda_k A^{i_k}v + S_k,
	\end{align*}
	where 
	\[ S_k = \sum_{\ell=1}^{j_k-1} W_{i_k-1,\ell}(A,B) A x_{0,j_k-\ell} .  \]
	Since $j_k-\ell <j_k$ when $\ell \geq 1$, the vector $S_k$ depends only on  $\lambda_1, \ldots, \lambda_{k-1}$, and not on $\lambda_k$. Since all vector norms on  $\mathbb C^n$ are equivalent, we may use the norm induced by the standard inner product on $\C^n$ to obtain
	\[ \frac{1}{2\pi} \int_0^{2\pi} \| S_k + e^{it} A^{i_k} v\|^2 \, dt = \| S_k\|^2 + \|  A^{i_k} v \|^2. \]
	By the mean-value theorem for integrals, there exists $t_k$ such that 
	\[  \| S_k + e^{it_k} A^{i_k} v\|^2 = \|S_k\|^2 +  \| A^{i_k} v\|^2.  \]
	Choose $\lambda_k=e^{it_k}$. Then 
	\begin{equation*}
		\| x_{i_k,j_k} \| = \|S_k+\lambda_k A^{i_k}v\|\ge \|A^{i_k}v\|\ge \varepsilon .
	\end{equation*}
	This contradicts the attractivity of ${\cal S}(A,B)$. Therefore, the 1D system associated with $A$ is attractive,
\end{proof}

The second lemma gives the uniform tail estimate that will be needed in the proof of Theorem~\ref{theo:att Lyap}. For bounded boundary data
\[ \xi=\bigl((x_{r,0})_{r\geq1},(x_{0,s})_{s\geq1}\bigr), \]
set
\begin{equation} \label{norm bound}
	\|\xi\| = \max\left\{ \sup_{r\geq1}\|x_{r,0}\|, \sup_{s\geq1}\|x_{0,s}\| \right\}. 
\end{equation}	
For $i,j\geq1$, define
\begin{align}
	T_{i,j}\xi &= \sum_{k=0}^{j-1}W_{i-1,k}(A,B)A x_{0,j-k} + \sum_{k=0}^{i-1}W_{k,j-1}(A,B)B x_{i-k,0}. \label{Tij}
\end{align}
By Proposition~\ref{BigForm}, $T_{i,j}\xi=x_{i,j}$. Thus $T_{i,j}$ is the linear operator that assigns to the boundary data $\xi$ the value of the corresponding solution at $(i,j)$. The following lemma provides the uniform tail estimate needed below.

\begin{lem}\label{lem:uniform}
	If $\mathcal S(A,B)$ is attractive, then there exist $N\in\mathbb N$ and $M>0$ such that
	\[ \|T_{i,j}\xi\| \leq M\|\xi\| \qquad \text{for all }i,j\geq N \]
	and all bounded boundary data $\xi$.
\end{lem}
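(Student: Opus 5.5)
The plan is a Baire category (uniform boundedness) argument on the Banach space of bounded boundary data. Let $X=\ell^\infty(\N;\C^n)\times\ell^\infty(\N;\C^n)$ with the norm $\|\xi\|$ from \eqref{norm bound}; it is a Banach space. Each $T_{i,j}$ in \eqref{Tij} is a finite linear combination of coordinate evaluations, so it is a bounded linear map $X\to\C^n$:
\[
\|T_{i,j}\xi\|\le \Bigl(\sum_{k=0}^{j-1}\|W_{i-1,k}A\|+\sum_{k=0}^{i-1}\|W_{k,j-1}B\|\Bigr)\|\xi\|.
\]

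First I would remove the smallness restriction in the definition of attractivity. If $\xi\neq 0$ is arbitrary, then $\eta=\tfrac{\delta}{2\|\xi\|}\xi$ satisfies $\|\eta\|<\delta$, so $T_{i,j}\eta\to0$ in the Pringsheim sense. By linearity of $T_{i,j}$ it follows that $T_{i,j}\xi\to 0$ as $i,j\to\infty$ for every $\xi\in X$.

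Next define, for $N\in\N$,
\[
F_N=\{\xi\in X:\ \|T_{i,j}\xi\|\le 1 \text{ for all } i,j\ge N\}.
\]
Each $F_N$ is closed, being an intersection of preimages of closed sets under the continuous maps $T_{i,j}$. The previous step shows $X=\bigcup_N F_N$: for each $\xi$, Pringsheim convergence with $\varepsilon=1$ gives an $N$ with $\xi\in F_N$. By the Baire category theorem some $F_{N}$ contains a closed ball $\overline{B}(\xi_0,r)$ with $r>0$.

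For any $\eta$ with $\|\eta\|\le r$, both $\xi_0$ and $\xi_0+\eta$ lie in $F_N$. Hence for all $i,j\ge N$,
\[
\|T_{i,j}\eta\|\le\|T_{i,j}(\xi_0+\eta)\|+\|T_{i,j}\xi_0\|\le 2.
\]
Rescaling a general $\xi\ne0$ to $\eta=r\xi/\|\xi\|$ then gives $\|T_{i,j}\xi\|\le (2/r)\|\xi\|$ for all $i,j\ge N$, so we may take $M=2/r$.

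I expect the only delicate point to be setting up the Baire argument correctly. One must work in $\ell^\infty$, which is complete, rather than in $c_0$ or a space of finitely supported data, and one must check closedness of $F_N$. Closedness relies on each individual $T_{i,j}$ being continuous; this holds because $x_{i,j}$ depends on only finitely many boundary values, even though no uniform bound on $\|T_{i,j}\|$ is available a priori. The essential point is that the threshold $N$ is made independent of $\xi$, which pointwise Pringsheim convergence alone does not give.
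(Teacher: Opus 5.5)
Your proposal is correct and follows essentially the same route as the paper's proof. Both run a Baire category argument on the complete space of bounded boundary data, using the closed sets $F_N$, the difference trick $T_{i,j}\eta = T_{i,j}(\xi_0+\eta)-T_{i,j}\xi_0$, and a final rescaling; your closed-ball version merely yields the constant $M=2/r$ in place of the paper's $4/r$.
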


\begin{proof}
	Let $\mathcal B$ be the space of all bounded boundary data, endowed with the norm \eqref{norm bound}. This space is complete. For each fixed $i,j$, equation~\eqref{Tij} shows that $T_{i,j}$ is a continuous linear map from $\mathcal B$ into $\mathbb C^n$.
	
	Since $\mathcal S(A,B)$ is attractive, there exists $\delta>0$ such that
	\[ T_{i,j}\xi\longrightarrow0 \qquad\text{as }i,j\longrightarrow\infty \]
	whenever $\|\xi\|<\delta$. By linearity, this convergence holds for every $\xi\in\mathcal B$.
	
	For each $m\geq1$, define
	\[ F_m = \left\{ \xi \in \mathcal B: \|T_{i,j}\xi\|\leq1 \text{ for all }i,j\geq m \right\}.\]
	Each $F_m$ is closed, since
	\[ F_m = \bigcap_{i,j\geq m} \left\{ \xi \in \mathcal B:\|T_{i,j}\xi\|\leq1 \right\}.\]
	Moreover, since $T_{i,j}\xi\to0$ for every $\xi\in\mathcal B$,
	\[ \mathcal B=\bigcup_{m=1}^{\infty}F_m.\]
	
	By the Baire category theorem, some $F_N$ has nonempty interior. Hence there exist $\xi^0\in\mathcal B$ and $r>0$ such that
	\[ \{\xi\in\mathcal B:\|\xi-\xi^0\|<r\} \subseteq F_N. \]
	Thus, if $\|\eta\|<r$, then
	$\xi^0+\eta\in F_N$ and $\xi^0\in F_N$, so for all $i,j\geq N$,
	\[ \|T_{i,j}\eta\| \leq \|T_{i,j}(\xi^0+\eta)\| + \|T_{i,j}\xi^0\| \leq2.\]
	
	Now let $\|\xi\| \leq1$ and set
	\[ \eta=\frac r2\,\xi. \]
	Then $\|\eta\|<r$, and therefore
	\[ \frac r2\,\|T_{i,j}\xi\| \leq2 \qquad (i,j\geq N). \]
	Hence
	\[ \|T_{i,j}\xi\| \leq \frac4r \qquad (i,j\geq N). \]
	The result follows with $M=4/r$.
\end{proof}

We are now ready to relate attractivity to Lyapunov stability. The next theorem shows that, in the bounded-boundary/Pringsheim framework, attractivity alone implies asymptotic stability.

\begin{theo} \label{theo:att Lyap}
If $\mathcal{S}(A,B)$ is attractive, then it is Lyapunov stable. In particular,  $\mathcal{S}(A,B)$ is asymptotically stable if and only if it is attractive. 
\end{theo}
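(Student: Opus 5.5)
The plan is to show that attractivity yields a single constant $M'$ with $\|x_{i,j}\|\le M'\|\xi\|$ for \emph{all} $i,j\ge 1$ and all bounded boundary data $\xi$, with $\|\xi\|$ as in \eqref{norm bound}. Lyapunov stability then follows by taking $\delta=\varepsilon/M'$. The second sentence of the theorem is immediate from the first, since asymptotic stability is by definition Lyapunov stability plus attractivity.

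First, Lemma~\ref{lem:uniform} already gives $N$ and $M$ with $\|T_{i,j}\xi\|\le M\|\xi\|$ for $i,j\ge N$. The lattice points not covered are those in the two strips $\{1\le i<N,\ j\ge1\}$ and $\{i\ge1,\ 1\le j<N\}$. These strips are unbounded, so a finite-max argument over finitely many operators $T_{i,j}$ will not suffice; a uniform bound along each line is needed.

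To obtain it, I will invoke Lemma~\ref{lem:Attractive}: the 1D systems generated by $A$ and $B$ are attractive. This means $A^k\to0$ and $B^k\to0$, hence $\rho(A)<1$ and $\rho(B)<1$. Consequently
\[ C:=\max\Bigl\{\sum_{k\ge0}\|A^k\|,\ \sum_{k\ge0}\|B^k\|\Bigr\}<\infty . \]
Now fix $i\ge1$ and view the recurrence as a 1D system in $j$ driven by the previous column:
\[ x_{i,j}=B^{j}x_{i,0}+\sum_{k=0}^{j-1}B^{k}A\,x_{i-1,j-k}, \]
which follows by unrolling $x_{i,j}=Bx_{i,j-1}+Ax_{i-1,j}$. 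Setting $m_i=\sup_{j\ge0}\|x_{i,j}\|$ gives
\[ m_i\le C\|\xi\|+C\|A\|\,m_{i-1}. \]
Here $m_0\le\|\xi\|$, using $x_{0,0}=0$ (a fixed nonzero $x_{0,0}$ would only contribute the constant term $\|x_{0,0}\|$, which is also small when it is part of the data). By induction, $m_i\le C_i\|\xi\|$ for constants $C_i$ depending only on $A$, $B$ and $i$. The symmetric argument, with the roles of $A$ and $B$ and of the two indices swapped, bounds $\sup_{i}\|x_{i,j}\|\le C'_j\|\xi\|$ for each fixed $j$. Taking $M'=\max\{M,C_1,\dots,C_{N-1},C'_1,\dots,C'_{N-1}\}$ yields the global bound.

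The main obstacle is precisely the strips: Pringsheim attractivity says nothing directly about bounded-index lines, where, a priori, solutions could blow up. The key input that rules this out is Lemma~\ref{lem:Attractive} (giving summability of $\|A^k\|$ and $\|B^k\|$), combined with the column-by-column induction. The only delicate point to check is that the constants $C_i$ are independent of $\xi$, which is clear from the linear recursion above.
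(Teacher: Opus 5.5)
Your proof is correct. Its architecture matches the paper's: Lemma~\ref{lem:uniform} handles the tail $i,j\ge N$, Lemma~\ref{lem:Attractive} gives $\rho(A),\rho(B)<1$, and the remaining work is to bound the finitely many unbounded strips. The two arguments differ only in that last step.

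The paper stays inside the representation formula \eqref{BigFormEqn}. It writes each word as $B^{q_0}AB^{q_1}\cdots AB^{q_p}$ to get
\[ \|W_{p,q}(A,B)\|\le C_B^{p+1}\binom{p+q}{q}\|A\|^p\beta^q. \]
It then uses the negative binomial series to show $\sum_q\|W_{p,q}\|<\infty$ for each fixed $p$ (and symmetrically), and bounds the one-axis operators $U_{i,j}$ and $V_{i,j}$ separately.

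You instead treat each column as a 1D system in $j$ driven by the previous column. You use only $\sum_k\|B^k\|<\infty$ (and $\sum_k\|A^k\|<\infty$ for the rows) and induct on the column index. This avoids word counting and the negative binomial identity entirely.

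Your constants $C_i$ may grow geometrically in $i$. The paper's bounds $C_B^{p+1}\|A\|^p(1-\beta)^{-p-1}$ grow the same way, and in both cases this is harmless because only $i<N$ (resp.\ $j<N$) is needed. The paper's version has the advantage of keeping everything in the $W_{p,q}$ formalism used elsewhere in the paper. Yours is shorter and more elementary.

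One small remark: $x_{0,0}$ never enters the recurrence for $i,j\ge 1$, since $x_{1,1}=Ax_{0,1}+Bx_{1,0}$. Your parenthetical about a nonzero $x_{0,0}$ is therefore unnecessary.
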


\begin{proof}
	By Lemma \ref{lem:uniform}, there exists $N \in \N$ for which 
	\[  \sup_{i \ge N, j \ge N } \| T_{i,j} \| < \infty. \]
	It remains to control the finitely many horizontal and vertical strips outside this tail region.  By Lemma  \ref{lem:Attractive} and the standard one-dimensional stability criterion, the systems
	generated by $A$ and $B$ are attractive; hence
	\[ \rho(A)<1,\qquad \rho(B)<1. \]
	
	Choose \(\alpha,\beta\in(0,1)\) and constants \(C_A,C_B>0\) such that
	\begin{equation*}
		\|A^m\|\le C_A\alpha^m, \qquad \|B^m\|\le C_B\beta^m \qquad (m\ge 0).
	\end{equation*}
    Fix $p \geq 0$. Every word containing $p$ copies of $A$ and $q$ copies of $B$ can be written in the form
    \[ B^{q_0} A B^{q_1}A \cdots A B^{q_p}, \qquad q_0+\cdots q_p=q. \]
    Thus each such word has  norm at most
    \[ \| B^{q_0} A B^{q_1}A \cdots A B^{q_p} \| \leq C_B^{p+1} \|A\|^p \beta^q. \]
    Since the number of such words is $\binom{p+q}{q}$, we get
    \[ \| W_{p,q}(A,B) \|  \leq C_B^{p+1}  \binom{p+q}{q} \|A\|^p \beta^q.  \]
	The negative binomial series then implies  
	\begin{align*}
		\sum_{q=0}^{\infty}\|W_{p,q}(A,B)\| & \leq C_B^{p+1} \|A\|^p \sum_{q=0}^\infty \binom{p+q}{q} \beta^q \\
		   & =  C_B^{p+1} \|A\|^p (1-\beta)^{-p-1} < \infty.
	\end{align*}
	Similarly, for each fixed \(q\ge 0\),
	\begin{equation*}
		\sum_{p=0}^{\infty}\|W_{p,q}(A,B)\| < \infty. 
	\end{equation*} 
	Now decompose \(T_{i,j}\) according to \eqref{BigFormEqn} as the sum of the two one-axis contributions
	\begin{equation*}
		T_{i,j}=U_{i,j}+V_{i,j},
	\end{equation*}
	where
	\begin{equation*}
		U_{i,j}\bigl((x_{0,r})_{r\ge 1}\bigr) = \sum_{r=0}^{j-1} W_{i-1,r}(A,B)A x_{0,j-r},
	\end{equation*}
	and
	\begin{equation*}
		V_{i,j}\bigl((x_{r,0})_{r\ge 1}\bigr) = \sum_{r=0}^{i-1} W_{r,j-1}(A,B)B x_{i-r,0}.
	\end{equation*}
	
	By Lemma \ref{lem:uniform}, the operators \(T_{i,j}\) are uniformly bounded when \(i\ge N\) and \(j\ge N\). Suppose first $1 \leq i < N$. Since $i$ belongs to a finite set, it suffices to prove uniform boundedness in $j$ for each fixed $i$: For this contribution, we have 
	\begin{align*}
		\|U_{i,j}\|  & \leq \|A\| \sum_{k=0}^{j-1} \| W_{i-1,k}(A,B) \| \\
		       & \leq \|A\| \sum_{k=0}^\infty \| W_{i-1,k}(A,B) \| < \infty
	\end{align*}
	Hence $\sup_{j\ge 1}\|U_{i,j}\|<\infty$. For the second contribution
	\[ \|V_{i,j}\| \leq \|B\| \sum_{k=0}^{i-1} \| W_{k,j-1}(A,B) \| .\]
	For each fixed $k$, the convergence of $ \sum_{q=0}^\infty \|W_{k,q}(A,B)\|$ implies that 
	\[ \sup_{q \ge 0} \| W_{k,q}(A,B) \| < \infty.\]
	Since the sum over $k$ is finite, it follows that 
	\[ \sup_{j \geq 1 } \| V_{i,j} \| < \infty. \]
	Taking the maximum over the finitely many indices $1 \leq i <N$, we obtain
		\begin{equation*}
		\sup_{1\le i<N,\ j\ge 1}\|T_{i,j}\|<\infty .
	\end{equation*}
	Similarly, if \(1\le j<N\), then the same estimates, with the roles of \(A\) and \(B\) interchanged, give
	\begin{equation*}
		\sup_{i\ge 1,\ 1\le j<N}\|T_{i,j}\|<\infty .
	\end{equation*}
	
	Combining these two strip bounds with the tail provided by Lemma~\ref{lem:uniform}, we obtain
	\begin{equation*}
		M:=\sup_{i,j\ge 1}\|T_{i,j}\|<\infty .
	\end{equation*}
	Therefore, for every boundary data \(\xi\in \mathcal B\),
	\begin{equation*}
		\sup_{i,j\ge 1}\|x_{i,j}\| = \sup_{i,j\ge 1}\|T_{i,j}\xi\| \le M\|\xi\| .
	\end{equation*}
	Given \(\varepsilon>0\), choose \(\delta=\min \left\{ \varepsilon, \frac{\varepsilon}{M+1}\right\}\). Then $\|\xi\|<\delta $ implies that the interior values satisfy 
	\begin{equation*}
         \|x_{i,j}\|<\varepsilon \qquad (i,j\ge 1),
	\end{equation*}
	and the boundary values also satisfy $\|x_{i,0}\| < \varepsilon$ and $\|x_{0,j} \| < \varepsilon$. Hence \(\mathcal S(A,B)\) is Lyapunov stable.
\end{proof}

Our next result establishes a recurrence relation satisfied by the sum of the anti-diagonals of the sequence $\{x_{i,j}\}$, defined by  
\begin{equation} 
y_n = \sum_{k=0}^n x_{k,n-k}.
\end{equation}
To the best of our knowledge, this relation has not appeared in the literature before. We use this relation to show that $\mathcal{S}(A, B)$ is unstable if $A+B$ has an eigenvalue outside the unit disk. 

\begin{Prop} \label{anti}
	Let $y_n$ be as above. We have 
	\begin{align}  	\label{bigrec}
		y_{n+1} &= (A+B) y_n + f_n = \sum_{k=0}^n (A+B)^{n-k} f_k 
    \end{align}
	where
	\[  f_n = x_{n+1,0} + x_{0,n+1} - A x_{n,0} - B x_{0,n} \]
	depends only on the boundary conditions.
\end{Prop}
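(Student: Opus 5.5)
The plan is a direct computation: split the anti-diagonal sum $y_{n+1}$ into its two boundary terms and its interior terms, apply the recurrence \eqref{S(A,B)} to each interior term, and recognize $y_n$ in the resulting sums. The closed form then follows by iterating the first-order recurrence. No estimates are needed; the whole argument is index bookkeeping.

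\textbf{Step 1 (isolating the boundary).} For $n\ge 0$ I will write
\[ y_{n+1} = x_{n+1,0} + x_{0,n+1} + \sum_{k=1}^{n} x_{k,n+1-k}. \]
Every term in the remaining sum has both indices at least $1$, so \eqref{S(A,B)} applies and gives
\[ x_{k,n+1-k} = A x_{k-1,n+1-k} + B x_{k,n-k}. \]

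\textbf{Step 2 (reindexing).}
\begin{itemize}
\item In the $A$-part I substitute $m=k-1$, which runs over $0,\dots,n-1$. This gives $A\sum_{m=0}^{n-1} x_{m,n-m} = A\,(y_n - x_{n,0})$, since only the term $m=n$ of $y_n$ is missing.
\item The $B$-part is $B\sum_{k=1}^{n} x_{k,n-k} = B\,(y_n - x_{0,n})$, since only the term $k=0$ is missing.
\end{itemize}
Adding these to the two boundary terms yields
\[ y_{n+1} = (A+B)y_n + x_{n+1,0}+x_{0,n+1}-Ax_{n,0}-Bx_{0,n} = (A+B)y_n + f_n. \]
The case $n=0$ is consistent: the interior sum is empty and $y_0=x_{0,0}$.

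\textbf{Step 3 (closed form).} I will iterate the recurrence, by induction on $n$, to get
\[ y_{n+1} = (A+B)^{n+1}y_0 + \sum_{k=0}^{n}(A+B)^{n-k}f_k. \]
With the standing convention $x_{0,0}=0$ we have $y_0=0$, which gives the stated formula. I will remark that for general $x_{0,0}$ the extra term $(A+B)^{n+1}x_{0,0}$ must be kept.

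The only delicate point is the endpoint bookkeeping in Step 2: checking exactly which terms of $y_n$ are missing from each reindexed sum, and handling the role of $x_{0,0}$ correctly. Everything else is routine.
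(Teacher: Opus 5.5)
Your proposal is correct and follows essentially the same route as the paper: isolate the two boundary terms of $y_{n+1}$, apply the recurrence to the interior terms, reindex to recover $(A+B)y_n$ minus the missing endpoint terms $Ax_{n,0}$ and $Bx_{0,n}$, and then iterate using $y_0=x_{0,0}=0$. Your remark that the term $(A+B)^{n+1}x_{0,0}$ must be kept for a general corner value is a correct minor refinement; the paper avoids it by its convention $x_{0,0}=0$.
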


\begin{proof}
	The recurrence relation \eqref{S(A,B)} for $x_{i,j}$ implies 
	\begin{align*}
		y_{n+1} & = \sum_{k=0}^{n+1} x_{k,n-k+1} \\
		& =  \sum_{k=1}^{n} Ax_{k-1,n-k+1} + \sum_{k=1}^{n} B x_{k,n-k} +x_{n+1,0} + x_{0,n+1}\\
		& = \sum_{k=0}^{n} (Ax_{k,n-k} + B x_{k,n-k}) + (x_{0,n+1} +x_{n+1,0} - A x_{n,0} -Bx_{0,n}) \\
		& = (A+B)y_n + f_n
	\end{align*}
	as desired. Since $y_0=x_{0,0}=0$, the recurrence gives $y_1=f_0$. Iterating
	\begin{align*}
		y_{n+1} & = (A+B) y_n +f_n 
\end{align*}
then yields 
\begin{align*}	
	y_{n+1}	& = (A+B)\sum_{k=0}^{n-1} (A+B)^{n-1-k} f_k + f_n \\
		& = \sum_{k=0}^{n} (A+B)^{n-k} f_k,
	\end{align*}
	so the second equality in \eqref{bigrec} follows by induction. 
\end{proof}

\begin{cor} \label{unstablegeneral}
	If $\rho(A+B) > 1$, then $\mathcal{S}(A,B)$ is unstable.
\end{cor}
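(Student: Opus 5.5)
We argue by contradiction, using the anti-diagonal recurrence of Proposition~\ref{anti}. Suppose $\mathcal{S}(A,B)$ is Lyapunov stable. Then there is $\delta>0$ such that $\|\xi\|<\delta$ gives $\|x_{i,j}\|<1$ for all $i,j\ge 1$; the boundary values themselves are also bounded by $\delta$. Consequently every anti-diagonal sum obeys
\[ \|y_n\| = \Bigl\|\sum_{k=0}^n x_{k,n-k}\Bigr\| \le (n+1)\max(1,\delta), \]
so $\|y_n\|$ grows at most linearly in $n$. The goal is therefore to exhibit small bounded boundary data for which $\|y_n\|$ grows exponentially.

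Let $\lambda$ be an eigenvalue of $A+B$ with $|\lambda|>1$, and let $v$ be a corresponding eigenvector. We choose boundary data so that the forcing term $f_n$ in \eqref{bigrec} is simple. The cleanest choice is to put data on one axis only, at a single site: set $x_{0,j}=0$ for all $j$, $x_{1,0}=\eta v$ with $\eta>0$ small, and $x_{i,0}=0$ for $i\ge 2$, with $x_{0,0}=0$. Then
\[ f_0 = x_{1,0} = \eta v, \qquad f_1 = -A x_{1,0} = -\eta A v, \qquad f_n = 0 \ \ (n\ge 2). \]
Formula \eqref{bigrec} gives, for $n\ge 1$,
\[ y_{n+1} = (A+B)^n \eta v - (A+B)^{n-1}\eta A v = \eta (A+B)^{n-1}\bigl(\lambda v - A v\bigr) = \eta (A+B)^{n-1} B v . \]

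The main obstacle is that $Bv$ need not be an eigenvector, and it could even vanish. Two fixes are available. The first is to use constant data $x_{i,0}=\eta v$ for all $i\ge1$. Then $f_0=\eta v$ and $f_n=\eta(I-A)v$ for $n\ge 1$, and the resulting sum $y_{n+1}=\eta\bigl[\lambda^n v+\sum_{k=1}^n (A+B)^{n-k}(I-A)v\bigr]$ again involves non-eigen directions. The more robust fix is to replace the eigenvector condition by a dual one. Take a left eigenvector $w^*$ with $w^*(A+B)=\lambda w^*$ and apply $w^*$ to \eqref{bigrec}:
\[ w^* y_{n+1} = \sum_{k=0}^n \lambda^{n-k} w^* f_k . \]
Now choose boundary data at the single site $x_{1,0}=\eta u$, so that $w^*f_0=\eta w^*u$ and $w^*f_1=-\eta w^*Au$. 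This gives
\[ w^*y_{n+1}=\eta\lambda^{n-1}\bigl(\lambda w^*u - w^*Au\bigr) = \eta\lambda^{n-1}\, w^*(\lambda I - A)u. \]
Since $w^*\neq 0$, the row vector $w^*(\lambda I-A)$ either is nonzero, in which case we pick $u$ to make the pairing equal to $1$, or it vanishes. If it vanishes, then $w^*A=\lambda w^*$, so $\lambda$ is an eigenvalue of $A$ with $|\lambda|>1$, and $\mathcal{S}(A,B)$ is unstable by the remark in Section~2 that instability of the 1D system generated by $A$ forces instability of $\mathcal{S}(A,B)$.

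In the main case we get $|w^*y_{n+1}|=\eta|\lambda|^{n-1}$, which grows exponentially. But $|w^*y_{n+1}|\le \|w\|\,\|y_{n+1}\|$ is bounded linearly in $n$, a contradiction. Hence $\mathcal{S}(A,B)$ is not Lyapunov stable; by Remark~\ref{rem2} the conclusion does not depend on the choice of norm.
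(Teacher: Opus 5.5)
Your proof is correct, but it takes a different route from the paper.

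\textbf{The paper's argument.} The paper removes exactly the obstacle you identify ($Bv$ need not lie along an eigenvector) by placing the eigenvector on \emph{both} axes. With $x_{1,0}=x_{0,1}=v$ and all other boundary data zero, one gets $f_0=2v$ and $f_1=-Av-Bv=-\lambda v$. Then \eqref{bigrec} collapses to the exact vector identity $y_n=\lambda^{n-1}v$. Its exponential growth contradicts the linear bound $(n+1)M$ that any bounded solution would give.

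\textbf{Your argument.} You keep the data on a single axis and pair \eqref{bigrec} with a left eigenvector $w^*$ of $A+B$. This gives $w^*y_{n+1}=\eta\lambda^{n-1}w^*(\lambda I-A)u=\eta\lambda^{n-1}w^*Bu$. The price is a case split. In the degenerate case $w^*B=0$ you appeal to one-dimensional instability of $A$, which is valid because then $\lambda\in\sigma(A)$ with $|\lambda|>1$.

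You could avoid that appeal. If $w^*B=0$, put the data on the other axis, $x_{0,1}=\eta u$, which gives $w^*y_{n+1}=\eta\lambda^{n-1}w^*Au$. Here $w^*A$ and $w^*B$ cannot both vanish, since their sum is $\lambda w^*\neq 0$.

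\textbf{Comparison.} The paper's symmetric choice is shorter and produces an explicit closed form. Your scalar projection shows that instability is detectable with data on one axis at a time, in the spirit of Remark~\ref{rem1}. It also does not need the forcing terms to line up with a right eigenvector.

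\textbf{Minor points.} Independence of the norm comes from item~1 of the first remark in Section~2, not from Remark~\ref{rem2}, which concerns unimodular rescaling and similarity. The exploratory paragraph on constant boundary data can simply be dropped.
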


\begin{proof}
	Consider the 1D system associated with $A+B$. By assumption,  $A+B$ has an eigenvalue $|\lambda| >1$ with eigenvector $v$. By Proposition~\ref{anti}, with boundary conditions 
	\[ x_{1,0} = x_{0,1} = v, \quad x_{i,0} = x_{0,j} = 0 \ (i,j > 1), \] 
	we have
	\[ f_0 = 2v, \ f_1 = -(A+B)v = -\lambda v, \quad f_j = 0 \ ( j>1).	\]
	Thus, it follows that 
	\begin{align*}
		y_{n} & = \sum_{k=0}^{n} x_{k,n-k}
		\nonumber
		\\
		& = (A+B)^{n-1}f_0 + (A+B)^{n-2} f_1
		\nonumber
		\\
		&= \lambda^{n-1} v
	\end{align*}
	Therefore
	\begin{equation} \label{eqn22}
		\left\| \sum_{k=0}^{n} x_{k,n-k} \right\| = |\lambda|^{n-1}\|v\| 
	\end{equation}  
	If there exists $M>0$ such that $\| x_{k,n-k} \| < M$ for all $k$ and $n$, then 
	\[ \left\| \sum_{k=0}^n x_{k,n-k} \right\| \leq \sum_{k=0}^n \| x_{k,n-k} \| \leq (n+1)M \]
	contradicting \eqref{eqn22}. Hence, there must be a subsequence $\{ x_{i_k,j_k} \}$ for which
	\[  \| x_{i_k,j_k} \| \to \infty \qquad \text{ as } k \to \infty. \]
	Thus the solution is unbounded, and hence $S(A,B)$ is not Lyapunov stable. 
\end{proof}

\section{The commutative case}

We begin by analyzing the stability of the system \(\mathcal{S}(A,B)\) when the matrices \(A\) and \(B\) commute. In this case, the stability of ${\cal S}(A,B)$ can be described explicitly in terms of paired spectral data for \(A\) and \(B\).

This section is related to the work of Bachelier et al.~\cite{BaCl23}, where asymptotic stability is characterized for commuting Fornasini--Marchesini models under the additional assumption that one of the two matrices is diagonalizable. Our setting and conclusions are different in several respects: we do not impose any diagonalizability assumption, we work with bounded boundary data and Pringsheim convergence rather than \(c_0\)-boundary data and convergence as \(i+j\to\infty\), and we classify not only asymptotic stability but also neutral stability and instability. The proofs below are also adapted to this functional setting and use the explicit representation formula to read off the growth or decay of the relevant coefficient matrices.

\subsection*{The scalar case}

It is useful to first consider the situation where $A=a$ and $B=b$ are scalar. More precisely, we consider the system
\begin{equation} \label{2D Scalar} 
\begin{cases} 
       x_{i,j} = a x_{i-1,j}+bx_{i,j-1} \\
       \{ x_{i,0} \}_{i=1}^\infty \in \ell^\infty, \quad  \{ x_{0,j}\}_{j=1}^\infty \in \ell^{\infty}.
  \end{cases}
\end{equation}  
In \cite{Ba19}, it is shown that $x=0$ is asymptotically stable as $i+j \to \infty$, with boundary data in $c_0$, if and only if one of the following conditions holds:
\begin{itemize}
\item $|a|+|b|\leq 1 \text{ if } a\neq 0 \text{ and } b\neq 0$,
\item $|a|<1 \text{ if } b=0$,
\item $|b|<1 \text{ if } a=0$.
\end{itemize}
However, it follows directly from the recurrence relation \eqref{2D Scalar} that $x=0$ remains attractive when $|a|=1$, $b=0$, or when $|b|=1$, $a=0$, under their assumptions.
Consequently, the correct formulation of their result should be that $x=0$ is asymptotically stable if and only if $|a|+|b|\leq 1$.

The proof given in \cite{Ba19} is computationally intensive; it involves a detailed case-by-case analysis and relies on  Stirling's formula to estimate behavior of terms of the form $\binom{r+s}{r} a^r b^s$. In Theorem \ref{Scalar} below, we extend this result to the more general setting described in \eqref{2D Scalar} using a much simpler argument based on the negative binomial series.  The theorem provides a more refined analysis by distinguishing between cases in which the system is asymptotically stable and those in which it is neutrally stable. 
 
\begin{theo} \label{Scalar}
The  system $\mathcal{S}(a,b)$ is:
\begin{enumerate}
  \item[(a)] asymptotically stable if $|a|+|b|<1$,
  \item[(b)] neutrally stable if $|a|+|b|=1$, 
  \item[(c)] unstable if $|a|+|b|>1$.
\end{enumerate}
\end{theo}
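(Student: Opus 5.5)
In the scalar case Proposition~\ref{BigForm} gives
\[ x_{i,j}=\sum_{k=0}^{j-1}\binom{i-1+k}{k}a^{i}b^{k}\,x_{0,j-k}+\sum_{k=0}^{i-1}\binom{k+j-1}{k}a^{k}b^{j}\,x_{i-k,0}. \]
By Remark~\ref{rem2}(1) we may assume $a=|a|\ge0$ and $b=|b|\ge0$. The whole analysis then rests on the negative binomial identity $\sum_{k\ge0}\binom{m+k}{k}t^k=(1-t)^{-m-1}$ for $0\le t<1$. Taking sup norms of the boundary data gives
\[ |x_{i,j}|\le\|\xi\|\Bigl(a^{i}\sum_{k\ge0}\binom{i-1+k}{k}b^k+b^{j}\sum_{k\ge0}\binom{j-1+k}{k}a^k\Bigr)=\|\xi\|\Bigl(\bigl(\tfrac{a}{1-b}\bigr)^{i}+\bigl(\tfrac{b}{1-a}\bigr)^{j}\Bigr), \]
valid whenever $a,b<1$. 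If $a+b\le1$, then $a\le1-b$ and $b\le1-a$, so both ratios are at most $1$; hence $|x_{i,j}|\le2\|\xi\|$ and the system is Lyapunov stable. The degenerate cases $a=1,b=0$ and $a=0,b=1$ must be handled directly: there $x_{i,j}$ simply copies boundary values and stays bounded.

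For (a), $a+b<1$ makes both ratios strictly less than $1$, so $x_{i,j}\to0$ as $i,j\to\infty$, which gives attractivity. Asymptotic stability then follows, either from the Lyapunov bound above or from Theorem~\ref{theo:att Lyap}.

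For (b), Lyapunov stability is already established, so it remains to exhibit bounded data whose solution does not tend to $0$. Take $x_{0,j}=1$ for all $j$ and $x_{i,0}=0$. If $b<1$, the first sum is $a^i\sum_{k=0}^{j-1}\binom{i-1+k}{k}b^k$, which tends to $(a/(1-b))^i=1$ as $j\to\infty$ for each fixed $i$. So for large $i$ and $j$ the value stays near $1$, and Pringsheim convergence to $0$ fails. To make this uniform, I will note that the partial sums are increasing in $j$ and use the fact that the negative binomial distribution with parameters $i$ and $b$ has mean $ib/(1-b)$. Taking $j=Ci$ with $C$ large, the tail mass is small, so $x_{i,Ci}\ge 1/2$. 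The case $b=1$ (so $a=0$) is handled symmetrically using the other axis.

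For (c), if $a>1$ or $b>1$, instability follows from the one-axis reduction in the Remark of Section~2. Otherwise $\rho(a+b)=a+b>1$ and Corollary~\ref{unstablegeneral} applies directly. In the case $a,b\le1$ with $a+b>1$, the corollary needs no further argument.

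I expect the main obstacle to be the non-attractivity in (b), namely producing indices $i,j$, both large, at which $|x_{i,j}|$ is bounded below. The tail estimate for the negative binomial sum, truncated at $j$ terms, is the delicate point. The plan is to use the probabilistic interpretation with Chebyshev's inequality: the variance is $ib/(1-b)^2$, so along $j=Ci$ the truncated sum is at least $1-O(1/i)$.
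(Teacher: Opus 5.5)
Your proof is correct, and for (a), (c) and the Lyapunov half of (b) it follows the paper's argument:
\begin{itemize}
\item rotate so that $a,b\ge 0$;
\item bound $|x_{i,j}|$ by $\bigl[(a/(1-b))^i+(b/(1-a))^j\bigr]\|\xi\|$ using the negative binomial series;
\item treat $(a,b)=(1,0)$ and $(0,1)$ directly;
\item invoke Corollary~\ref{unstablegeneral} when $a+b>1$.
\end{itemize}
Your preliminary split into $a>1$ or $b>1$ is unnecessary, since $\rho(a+b)=a+b>1$ covers every case.

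The genuine difference is the non-attractivity step in (b). The paper works along the diagonal and splits into cases:
\begin{itemize}
\item For $a=b=\tfrac12$ it puts $\delta$ on both axes and uses $\sum_{k=0}^{i-1}\binom{i-1+k}{k}2^{-k}=2^{i-1}$ to get $x_{i,i}=\delta$.
\item For $0<a<\tfrac12$ it puts $\delta$ on the $i$-axis only. It then uses the truncated-sum inequality $\sum_{k=0}^{i-1}\binom{i-1+k}{k}t^k\ge(1-t)^{-(i-1)}$ for $t\in(0,\tfrac12)$, which it asserts without proof, to get $x_{i,i}\ge b\delta$.
\item The case $0<b<\tfrac12$ follows by symmetry.
\end{itemize}

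You instead use one boundary choice for every $b<1$. For fixed $i$ the partial sums increase to $(a/(1-b))^i=1$. Pringsheim convergence requires smallness on the whole quadrant $i,j\ge N$, so fixing $i=N$ and letting $j\to\infty$ already contradicts attractivity. This needs only the full series you already used for the upper bound. It avoids the case split at $a=\tfrac12$ and the unproved truncated-sum estimate. In exchange, the paper's route shows that the solution fails to decay along the diagonal itself.

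So the step you flag as the main obstacle is not an obstacle. Your Chebyshev bound along $j=Ci$ is correct: the mean is $ib/(1-b)$ and the variance is $ib/(1-b)^2$, so $x_{i,Ci}\ge 1-O(1/i)$ once $C>b/(1-b)$. It is simply not needed for the theorem.

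Also, your sentence ``for large $i$ and $j$ the value stays near $1$'' is false as stated: for $j$ much smaller than $ib/(1-b)$ the value is close to $0$. What actually carries the argument is the fixed-$i$ limit as $j\to\infty$.
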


\begin{proof}
As a consequence of Proposition \ref{BigForm}, the solution $x_{i,j}$ admits the representation
\begin{equation} \label{Scalar eq0}  
x_{i,j} = a^i \sum_{k=0}^{j-1} \binom{i-1+k}{k} b^k x_{0,j-k} + b^j \sum_{k=0}^{i-1} \binom{j-1+k}{k} a^k x_{i-k,0}.
\end{equation}
By Remark \ref{rem2}, we may assume without loss of generality that $a$ and $b$ are non-negative real numbers.

\smallskip

\noindent \textit{(a)} Suppose $\|\{x_{i,0}\}\|_\infty, \|\{x_{0,j}\}\|_\infty \leq \delta$ for some $\delta>0$. Then, by the negative binomial series, it follows for $a,b \in [0,1)$ that 
\begin{equation} \label{Scalar eq1}
| x_{i,j} | \leq  \left[  \left( \frac{a}{1-b}  \right)^i   +  \left( \frac{b}{1-a} \right)^j   \right] \delta.
\end{equation}
Therefore, the fixed point $x=0$ is  attractive if 
\[  \frac{a}{1-b} <1 \qquad \text{and} \qquad \frac{b}{1-a}  <1, \]
which is equivalent to the condition $a+b<1$. Consequently, $\mathcal{S}(a,b)$ is asymptotically stable when $a+b<1$.

\medskip

\noindent \textit{(b)} Now, assume that $a+b=1$. We consider several distinct cases. First, suppose $a=1$, $b=0$. In this case, the recurrence relation simplifies to
\[ x_{i,j}= x_{0,j} \quad \forall i,j \in \N. \]
This clearly implies that $\mathcal{S}(a,b)$  is Lyapunov stable, but not attractive. A similar argument applies when $a=0$ and $b=1$.

Next, consider the case $a+b=1$ with $a,b \in (0,1)$. From equation \eqref{Scalar eq1}, we have
\[ |x_{i,j}| = \Oo(\delta) \quad \forall \, i,j \in \N, \]
which implies Lyapunov stability of the fixed point $x=0$. To show that $\mathcal{S}(a,b)$ is not attractive, we first consider the specific case $a=b=\frac{1}{2}$, and choose the boundary conditions
\[  x_{k,0} = x_{0,k} =  \delta>0 \quad \forall k \in \N.\]
Using equation \eqref{Scalar eq0}, the diagonal orbit $x_{i,i}$ becomes
\begin{align*}
  x_{i,i} & = 2^{-i} \sum_{k=0}^{i-1} \binom{i-1+k}{k} 2^{-k} \delta +  2^{-i} \sum_{k=0}^{i-1} \binom{i-1+k}{k} 2^{-k} \delta \\
     & = 2^{-i} 2^{i-1} \delta + 2^{-i} 2^{i-1} \delta \\
     & = \delta.
\end{align*}
Thus, the orbit remains constant along the diagonal, and the fixed point $x=0$ is not attractive in this case. 

Finally, consider the case $a+b=1$ with $0<a<1/2$, and choose boundary conditions 
\begin{equation} \label{IC 2}
 x_{k,0} =\delta>0 \qquad \text{and} \qquad x_{0,k} = 0 \qquad \forall k \in \N
\end{equation}
Then, it follows from the inequality
\begin{equation} \label{Scalar eq2}
\sum_{k=0}^{i-1} \binom{i-1+k}{k} \, t^k \geq \frac{1}{(1-t)^{i-1}} \qquad ( t \in (0,1/2))
\end{equation} 
that 
\begin{align*}
  x_{i,i} & = b^i \sum_{k=0}^{i-1} \binom{i-1+k}{k} a^k \delta  \\
   & \geq \frac{b^i}{(1-a)^{i-1}} \delta \\
   & = b \delta
\end{align*}
which implies again that $\mathcal{S}(a,b)$ is not attractive. A similar argument applies in the case $0<b<1/2$. This completes the proof of \textit{(b)}.

\medskip

\noindent \textit{(c)} To conclude, we note that Corollary~\ref{unstablegeneral} implies that $\mathcal{S}(a,b)$ is unstable when $a+b>1$.
\end{proof}

\begin{rem} \label{rem4}
	In the scalar case, the stability criterion depends on the functional setting in which the boundary-value problem is posed. For instance, under bounded boundary data and Pringsheim convergence, the corresponding asymptotic stability condition is $|a|+|b|<1$. This differs from the $c_0$-boundary setting considered in, for example, \cite{Ba19,BaCl23}, where the borderline case $|a|+|b|=1$ may still be admissible. Thus, even in the scalar case, the stability property is determined by the system only after the class of admissible boundary data and the convergence regime have been fixed.
\end{rem}

\subsection*{The matrix case}

We now turn to the matrix case when $A$ and $B$ are commuting matrices. We begin with the generic situation in which both $A$ and $B$ are diagonalizable. It is well-known that commuting diagonalizable matrices can be simultaneously diagonalized; that is, there exists an invertible matrix $X$ such that   

\begin{equation} \label{Order}
X^{-1}AX = \begin{bmatrix} a_1  \\  & \ddots \\ & &  a_n\end{bmatrix} \quad \mbox{and} \quad X^{-1}BX = \begin{bmatrix} b_1 \\  & \ddots \\ & & b_n\end{bmatrix}
\end{equation} 
where $a_1,\ldots,a_n$ and $b_1,\ldots,b_n$ are the eigenvalues of $A$ and $B$, respectively.  

It is straightforward to observe that, under these assumptions, the stability analysis of the system reduces to verifying the scalar stability conditions for each pair of eigenvalues $(a_k,b_k)$. The following result is thus an immediate consequence of Theorem \ref{Scalar}.

\begin{cor} \label{diagonal}
Let $A$, $B \in \mathcal{M}_n(\C)$ be commuting, diagonalizable matrices satisfying \eqref{Order}.  Then, the system $\mathcal{S}(A,B)$ is:
\begin{enumerate}
  \item[(a)] asymptotically stable if $|a_k| + |b_k| < 1 $ for each $k=1,\dots, n$, 
  \item[(b)] neutrally stable if $|a_k| + |b_k| \leq 1$ for each $k=1,\dots,n$, with equality for at least one $k$,
  \item[(c)] unstable if $|a_k| + |b_k| > 1$ for some $k$.
\end{enumerate}
\end{cor}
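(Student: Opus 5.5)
The plan is to reduce Corollary~\ref{diagonal} to Theorem~\ref{Scalar} by simultaneous diagonalization and the similarity invariance in Remark~\ref{rem2}. Let $X$ be as in \eqref{Order}. By Remark~\ref{rem2}(2), $\mathcal{S}(A,B)$ has the same stability type as $\mathcal{S}(D_A,D_B)$ with $D_A=X^{-1}AX$ and $D_B=X^{-1}BX$, since $y_{i,j}=X^{-1}x_{i,j}$ is a bijection on bounded boundary data that preserves boundedness and convergence. So we may assume $A=\mathrm{diag}(a_k)$ and $B=\mathrm{diag}(b_k)$. The system then decouples into $n$ independent scalar systems $\mathcal{S}(a_k,b_k)$ acting on the coordinates $x^{(k)}_{i,j}$, with coordinate boundary data $\{x^{(k)}_{i,0}\},\{x^{(k)}_{0,j}\}$ that can be chosen independently. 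We work with the max norm $\|\cdot\|_\infty$.

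The key observation is that, for the max norm, the vector system is Lyapunov stable (respectively attractive) if and only if every scalar component system is. In one direction, embed scalar data in the $k$-th coordinate and set all other coordinates to zero. In the other direction, if each scalar system satisfies $|x^{(k)}_{i,j}|<\varepsilon$ whenever its data are below $\delta_k$, take $\delta=\min_k\delta_k$; Pringsheim convergence of finitely many coordinates likewise gives convergence of the vector. With this equivalence, the three parts follow directly from Theorem~\ref{Scalar}.

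\textbf{(a)} If $|a_k|+|b_k|<1$ for all $k$, every component is asymptotically stable, so the whole system is.

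\textbf{(b)} If $|a_k|+|b_k|\le1$ for all $k$, every component is Lyapunov stable, so the whole system is. If equality holds for some $k_0$, the component $\mathcal{S}(a_{k_0},b_{k_0})$ is not attractive by Theorem~\ref{Scalar}(b). Placing that component's non-attractive boundary data in coordinate $k_0$, scaled to lie below any prescribed $\delta$ (which linearity allows), shows the vector system is not attractive. Hence it is neutrally stable.

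\textbf{(c)} If $|a_k|+|b_k|>1$ for some $k$, that component is unstable, and embedding its unbounded solution in coordinate $k$ shows $\mathcal{S}(A,B)$ is unstable.

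No step is a real obstacle. The only point needing care is that attractivity is defined for small data only, so the witness data in (b) must be rescaled, and this is harmless by linearity.
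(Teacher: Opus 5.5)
Your proposal is correct and is essentially the paper's argument. You simultaneously diagonalize via \eqref{Order}, use the similarity invariance of Remark~\ref{rem2} to decouple $\mathcal{S}(A,B)$ into the scalar systems $\mathcal{S}(a_k,b_k)$, and read off each case from Theorem~\ref{Scalar}; the paper asserts this reduction as immediate, and your max-norm component argument (taking $\delta=\min_k\delta_k$ and placing scalar witnesses in a single coordinate) supplies the details it omits.
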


\begin{ex}
Consider the system \eqref{S(A,B)} with $A$ normal and $B=A^*$ (its conjugate transpose). Then, by Corollary \ref{diagonal}, the fixed point $x=0$ is asymptotically stable if and only if the spectral radius $\rho(A)$ satisfies $\rho(A)<1/2$. The system is neutrally stable if $\rho(A)=1/2$, and unstable if $\rho(A)>1/2$. 
\end{ex}

We now turn to the general commuting case, in which one or both of the matrices $A$ and $B$ are defective. In this setting, additional care is required to analyze the system's stability. In particular, one must first establish a correspondence between the eigenvalues of $A$ and $B$. This is achieved through the following factorization result due to O'Meara and Vinsonhaler \cite{OmVi06}. 

\begin{lem} \label{OV}
Let $A,B \in \mathcal{M}_n(\C)$ be two commuting matrices. Then, there exists an invertible matrix $X \in \mathcal{M}_n(\C)$ such that both $X^{-1}AX$ and $X^{-1}BX$ are block diagonal matrices with the same block structure:
\begin{equation} \label{Factorization}
X^{-1}AX = \begin{bmatrix}
   A_1 \\
   & \ddots \\
   & &  A_m
\end{bmatrix} 
\quad \mbox{and} \quad
X^{-1}BX = \begin{bmatrix}
   B_1 \\
   &  \ddots \\
   & &  B_m
\end{bmatrix} 
\end{equation}
where each pair of corresponding blocks $A_k$ and $B_k$ commutes. Moreover, each of the blocks $A_k$ and $B_k$ has a single eigenvalue. 
\end{lem}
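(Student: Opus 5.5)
Since $A$ and $B$ commute, every generalized eigenspace of $A$ is invariant under $B$. So I would decompose $\C^n$ in two stages, first by the spectrum of $A$ and then refining by the spectrum of $B$.

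First, let $\alpha_1,\dots,\alpha_r$ be the distinct eigenvalues of $A$, and put $V_s=\ker(A-\alpha_s I)^n$. Then $\C^n=V_1\oplus\cdots\oplus V_r$. If $v\in V_s$, then $(A-\alpha_sI)^nBv=B(A-\alpha_sI)^nv=0$, so $BV_s\subseteq V_s$. Trivially $AV_s\subseteq V_s$.

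Second, let $B_s=B|_{V_s}$ and let $\beta_{s,1},\dots,\beta_{s,t_s}$ be its distinct eigenvalues. Decompose $V_s$ into the generalized eigenspaces $W_{s,\ell}=\ker(B_s-\beta_{s,\ell}I)^n$. The same argument with the roles exchanged applies, because $A|_{V_s}$ commutes with $B_s$. It shows that each $W_{s,\ell}$ is invariant under both $A$ and $B$.

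Now choose a basis of $\C^n$ by concatenating bases of all the $W_{s,\ell}$, and let $X$ be the matrix whose columns are these basis vectors. Then $X^{-1}AX$ and $X^{-1}BX$ are block diagonal with the same block structure, the blocks being the restrictions $A_k=A|_{W_{s,\ell}}$ and $B_k=B|_{W_{s,\ell}}$. These blocks commute because they are restrictions of commuting operators to a common invariant subspace.

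For the single-eigenvalue claim, $B_k$ has only the eigenvalue $\beta_{s,\ell}$ by construction. Likewise $A_k$ has only the eigenvalue $\alpha_s$, since $A_k-\alpha_sI$ is nilpotent on $W_{s,\ell}\subseteq V_s$.

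The main point needing care is the invariance step. Concretely, one must check that a generalized eigenspace of one operator is invariant under any operator commuting with it, and that the decomposition of $V_s$ into generalized eigenspaces of $B_s$ is a direct sum exhausting $V_s$. Both are the standard primary decomposition, applied to $B_s$ acting on the finite-dimensional space $V_s$, so no further obstacle arises.
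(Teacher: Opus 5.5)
Your proof is correct. The paper gives no proof of this lemma: it simply attributes the factorization to O'Meara and Vinsonhaler \cite{OmVi06}. Your two-stage primary decomposition is therefore a self-contained replacement for a citation, not a variant of an argument in the text.

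What your route buys is that it is elementary and identifies the blocks concretely as the joint generalized eigenspaces
\[ W_{s,\ell}=\ker(A-\alpha_sI)^n\cap\ker(B-\beta_{s,\ell}I)^n . \]
This description has three consequences:
\begin{enumerate}
\item the decomposition is canonical up to the choice of basis inside each block;
\item distinct blocks carry distinct eigenvalue pairs;
\item the pairing $(a_k,b_k)=(\alpha_s,\beta_{s,\ell})$ used in Theorem~\ref{commutative} is exactly the set of pairs for which this intersection is nonzero.
\end{enumerate}
The citation buys only brevity, and it places the statement inside a finer structure theory for commuting matrices that the paper never uses.

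The step you flag as delicate is handled correctly. You show that a generalized eigenspace is invariant under a commuting operator, and you apply the primary decomposition of $B|_{V_s}$ on $V_s$, noting that $A|_{V_s}$ and $B|_{V_s}$ commute. One small point is left implicit: each $W_{s,\ell}$ is nonzero, so no empty blocks arise. This holds because $\beta_{s,\ell}$ is an eigenvalue of $B|_{V_s}$.
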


We now remove the diagonalizability assumption. The simultaneous block decomposition in Lemma~\ref{OV} allows us to pair the spectral data of \(A\) and \(B\) block by block, leading to the following classification.

\begin{theo} \label{commutative}
Let $A,B \in \mathcal{M}_n(\C)$ be two commuting matrices. Let $a_k$ and $b_k$ denote the (single) eigenvalues associated with the corresponding blocks $A_k$ and $B_k$ in the block-diagonal factorization \eqref{Factorization} of $A$ and $B$ given by Lemma \ref{OV}. Then, the system $\mathcal{S}(A,B)$ is:
\begin{enumerate}
  \item[(a)] asymptotically stable if $|a_k| + |b_k| < 1$ for each $k=1,\dots,m$,  
  \item[(b)] neutrally stable if $|a_k|+|b_k| \leq 1$ for each $k=1,\dots,m$, with equality for some $k$, provided that the eigenvalues $a_k, b_k$ with $|a_k|+|b_k| = 1$ are semisimple, 
  \item[(c)] unstable otherwise. 
\end{enumerate}
\end{theo}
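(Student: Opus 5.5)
By Remark~\ref{rem2}(2) and Lemma~\ref{OV}, we may replace $(A,B)$ by the block-diagonal pair $(X^{-1}AX,X^{-1}BX)$. The recurrence then decouples into independent systems $\mathcal S(A_k,B_k)$. The stability type of $\mathcal S(A,B)$ is the "worst" of the types of the blocks: asymptotically stable if all blocks are, Lyapunov stable if all blocks are, unstable if one block is. So it suffices to analyze one pair
\[ A_k=aI+N,\qquad B_k=bI+M, \]
with $N,M$ commuting nilpotents. Proposition~\ref{BigForm} gives $W_{p,q}=\binom{p+q}{p}A_k^pB_k^q$, so
\[ x_{i,j}=A_k^i\sum_{\ell=0}^{j-1}\binom{i-1+\ell}{\ell}B_k^\ell x_{0,j-\ell}+B_k^j\sum_{\ell=0}^{i-1}\binom{j-1+\ell}{\ell}A_k^\ell x_{i-\ell,0}. \]
By Remark~\ref{rem2}(1), applied blockwise through the unimodular rescaling, we may take $a,b\ge 0$.

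\emph{Case (a).} Suppose $a+b<1$. Choose $\varepsilon>0$ with $a+b+2\varepsilon<1$ and constants $C$ with $\|A_k^p\|\le C(a+\varepsilon)^p$ and $\|B_k^q\|\le C(b+\varepsilon)^q$ (possible since $\rho(A_k)=a$ and $\rho(B_k)=b$). Then the negative binomial computation from the proof of Theorem~\ref{Scalar} goes through verbatim with $a,b$ replaced by $a+\varepsilon,b+\varepsilon$. This gives
\[ \|x_{i,j}\|\le C^2\delta\Big[\Big(\tfrac{a+\varepsilon}{1-b-\varepsilon}\Big)^i+\Big(\tfrac{b+\varepsilon}{1-a-\varepsilon}\Big)^j\Big], \]
which is uniformly bounded by a multiple of $\delta$ and tends to $0$ in the Pringsheim sense.

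\emph{Case (b).} Blocks with $a+b<1$ are asymptotically stable by (a). For a block with $a+b=1$, the hypothesis that $a,b$ are semisimple, together with the fact that each block has a single eigenvalue, forces $A_k=aI$ and $B_k=bI$. That block is therefore $n_k$ copies of the scalar system, which is neutrally stable by Theorem~\ref{Scalar}(b). Combining the blocks, $\mathcal S(A,B)$ is Lyapunov stable but not attractive.

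\emph{Case (c).} There are two subcases.

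If $|a_k|+|b_k|>1$ for some $k$, rescale by $(z_1,z_2)\in\mathbb T^2$ so that $z_1a_k=|a_k|$ and $z_2b_k=|b_k|$. Then $z_1A+z_2B$ has the eigenvalue $|a_k|+|b_k|>1$ on the $k$-th block. Corollary~\ref{unstablegeneral} together with Remark~\ref{rem2}(1) then gives instability.

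Otherwise some block has $a+b=1$ with $(N,M)\neq(0,0)$.
\begin{itemize}
\item If $b=0$ (so $a=1$) and $N\ne0$, then $\rho(A)=1$ with a nontrivial Jordan block. The 1D system associated with $A$ is unstable, and hence so is $\mathcal S(A,B)$ by the earlier remark on single-axis boundary data. The case $a=0$, $b=1$, $M\neq 0$ is symmetric.
\item In all remaining subcases I exhibit polynomial growth. Pick $v$ and the largest $r\ge1$ such that $N^pM^qv\ne0$ for some $p+q=r$. Take constant boundary data on one axis, say $x_{0,j}=v$ and $x_{i,0}=0$. Expand $A_k^iB_k^\ell=\sum_{p,q}\binom ip\binom\ell q a^{i-p}b^{\ell-q}N^pM^qv$, using $a,b>0$ when both are present.
\item Use the derivative identities of the negative binomial series. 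For fixed $i$ and $j\to\infty$, the coefficient of $N^pM^qv$ behaves like $c_{p,q}\,i^{p+q}$ with $c_{p,q}>0$; for example $a^i\sum_\ell\binom{i-1+\ell}\ell\binom\ell q b^\ell\sim (i/a)^q/q!$ when $a+b=1$.
\item Hence $x_{i,j}$ has a component of size $\asymp i^r$ along the top-level vectors $N^pM^qv$, and the solution is unbounded. In the subcase $a=1$, $b=0$, $N=0$, $M\ne0$, the same computation with $B_k^\ell=M^\ell$ gives the term $\binom{i-1+q}{q}M^qv\sim i^q$.
\end{itemize}

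The main obstacle is this last step: making the leading-order asymptotics rigorous. One must show that the top-degree terms $N^pM^qv$ with $p+q=r$ cannot cancel each other. I would handle this by noting that the combined leading coefficient is a nonzero polynomial in $i$ applied to $\sum_{p+q=r}c_{p,q}N^pM^qv$. If cancellation occurred, I would replace $v$ by a suitable $N^{p'}M^{q'}v$ so that only one top monomial survives.
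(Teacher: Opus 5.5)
Your parts (a) and (b), the subcase $|a_k|+|b_k|>1$, and the degenerate subcases with $ab=0$ follow the paper and are fine. The gap is in the main subcase $a,b>0$, $a+b=1$, $(N,M)\neq(0,0)$. Your growth mechanism is the limit $j\to\infty$ at fixed $i$, with constant data $x_{0,j}=v$. That limit can be summed exactly:
\[
\lim_{j\to\infty}x_{i,j}=A_k^i\,(I-B_k)^{-i}v=\Bigl(I+\tfrac{N}{a}\Bigr)^{i}\Bigl(I-\tfrac{M}{a}\Bigr)^{-i}v .
\]
Its top-order term in $i$ is $\frac{i^r}{r!\,a^r}(N+M)^r v$. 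So the real obstruction is not cancellation between distinct monomials but the vanishing of $(N+M)^r v$. If $N+M=0$, the limit equals $v$ for \emph{every} $v$: all top monomials are multiples of $N^r v$, and their coefficients sum to $(1-1)^r/r!=0$. Hence no choice of $v$ yields growth, and in particular the replacement $v\mapsto N^{p'}M^{q'}v$ does not. This case is not exotic. It is exactly Example~\ref{commutingblowup}, where $A=0.5I+N$ and $B=0.5I-N$, and in the paper's two-dimensional quotient it is the case $c_1+c_2=0$.

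In that case the solution is still unbounded, but it grows only like $\sqrt{j}$, and the growth comes from fluctuations rather than drift. The relevant $e_1$-coefficient is an average of $\ell c_1/a+jc_2/b$ against a negative binomial distribution, and this average vanishes when $c_1+c_2=0$. The paper handles this in two steps:
\begin{enumerate}
\item It chooses the phases of the boundary values so that all terms add in modulus. This is legitimate because the data may depend on $(i,j)$: Lyapunov stability is a uniform bound on the solution operators. The coefficient then becomes $\frac{|c_1|}{a}\,\mathbb E|X_j-\mathbb E X_j|$.
\item It bounds this mean absolute deviation below by $C\sqrt j$, using H\"older's inequality with the second and fourth central moments.
\end{enumerate}
With your constant data the growth is also present, but only near $j\approx ib/a$, where the truncated first moment of the centred negative binomial variable is of order $\sqrt i$; it never appears in the limit $j\to\infty$. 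You need an argument of this kind.

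Conversely, when $N+M\neq0$ your route does work, and it is cleaner than the monomial bookkeeping you sketch. The matrix $G=(I+N/a)(I-M/a)^{-1}$ is unipotent with $G-I=\frac1a(N+M)(I-M/a)^{-1}\neq0$. So $G^i v$ is unbounded for any $v$ with $(G-I)v\neq0$. This is the counterpart of the paper's Jensen bound in the case $c_1+c_2\neq0$.
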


\begin{proof}
As a consequence of Lemma \ref{OV}, it suffices to study the stability of the systems individually for each pair of blocks $A_k$ and $B_k$. By Remark~\ref{rem2}, we may, without loss of generality, assume for each fixed block that the single eigenvalues $a_k$ and $b_k$ associated with $A_k$ and $B_k$, respectively, are non-negative; that is $a_k,b_k \geq 0$.  We denote by $x_{i,j}^{(k)}$ the vector corresponding to the system $\mathcal{S}(A_k,B_k)$. 

\smallskip

\noindent \textit{(a)} Let $\delta>0$, and assume that the boundary conditions satisfy 
\[  \| x_{i,0}^{(k)}\|_\infty <\delta, \ \|x_{0,j}^{(k)}\|_\infty <\delta \]
for all $i,j \in \N$. Let $\varepsilon>0$ be chosen sufficiently small such that 
\[   a_k+b_k+2\varepsilon<1.\]
By Lemma  5.6.10 on page 297 of \cite{HoJo85}, there exist constants $C_1=C_1(A_k,\varepsilon)>0$ and $C_2=C_2(B_k,\varepsilon)>0$ such that 
\begin{equation*}
\|A_k^r\| \leq C_1 ( a_k+\varepsilon)^r \quad \text{ and } \quad \| B_k^r \| \leq C_2 ( b_k+\varepsilon)^r
\end{equation*}
for all $r \in \N$. Applying Proposition \ref{BigForm} together with these bounds, we obtain the following estimate
\begin{align} \label{positive}
    \|x_{i,j}^{(k)} \|_\infty & \leq C_1 C_2 \delta (a_k+\varepsilon)^i  \sum_{l=0}^{j-1} \binom{i-1+l}{l} (b_k+\varepsilon)^l \\
     & \ \ + C_1 C_2 \delta (b_k+\varepsilon)^j \sum_{l=0}^{i-1} \binom{j-1+l}{l} (a_k+\varepsilon)^l.
\end{align}
Then, by the negative binomial series, it follows that
\begin{equation*}
  \| x_{i,j}^{(k)} \|_\infty = \Oo \left( \, \left( \frac{a_k+ \varepsilon}{1-(b_k+\varepsilon)} \right)^i  \right)+ \Oo \left( \, \left( \frac{b_k+ \varepsilon}{1-(a_k+\varepsilon)} \right)^j \, \right)
\end{equation*}
which decays to zero as $i,j \to \infty$. This implies that ${\cal S}(A,B)$ is asymptotically stable. 

\medskip 

\noindent \textit{(b)} Now, suppose that $a_k+b_k \leq  1$ for all $k \in \{1,\ldots,m\}$, with equality holding for at least one index $k$. By part \textit{(a)}, it suffices to focus on the blocks $A_k$ and $B_k$ for which $a_k+b_k=1$. By assumption, the eigenvalues $a_k$ and $b_k$ are semisimple; that is, their geometric multiplicities equal their algebraic multiplicities. Consequently, the blocks $A_{k}$ and $B_{k}$ can be simultaneously diagonalized. Since each block has only one eigenvalue, semisimplicity implies \(A_k=a_k I\) and \(B_k=b_k I\) on that block. Hence the conclusion follows directly from part \textit{(b)} of Corollary \ref{diagonal}. 

\medskip 

\noindent \textit{(c)} First, suppose that $a_k+b_k>1$ for some $k \in \{1,\ldots,m\}$. Since $A_k$ and $B_k$ commute, they must share a common eigenvector, which we denote by $v_k$. Then $(A_k+B_k)v_k = (a_k+b_k)v_k$, so $\rho(A_k+B_k)>1$. The system $\mathcal{S}(A_k,B_k)$ is therefore unstable. The block decomposition ~\eqref{Factorization} shows that $\mathcal{S}(A_k,B_k)$ is an invariant subsystem of  $\mathcal{S}(A,B)$. Choosing boundary data supported entirely in the $k$th block therefore produces an unbounded solution of the full system. Hence $\mathcal{S} (A,B)$ is unstable.  

Next, we consider the situation where $a_k+b_k=1$, but either $a_k$ or $b_k$ is not semisimple. By Theorem 2.3.3 in \cite{HoJo85}, we may assume without loss of generality that $A_k$ and $B_k$ are simultaneously upper triangular.  Hence, in a suitable basis of the $k$-th block, we may write
\[ A_k=a_kI+N_k,\qquad B_k=b_kI+M_k, \]
where $N_k$ and $M_k$ are strictly upper triangular nilpotent commuting matrices. Since either $A_k$ or $B_k$ is not semisimple, at least one of $N_k$ and $M_k$ is nonzero. Let $r$ be the first column for which either $N_k$ or $M_k$ has a nonzero entry. Then $N_k$ and $M_k$ vanish on
\[ W=\operatorname{span} \{e_1,\ldots,e_{r-1} \}. \]
Choose $s<r$ such that the $e_s$-coefficient of either $N_ke_r$ or $M_ke_r$ is nonzero, and set
\[ L=\operatorname{span} \{e_1,\ldots,e_{s-1},e_{s+1},\ldots,e_{r-1}\}. \]
Since $N_k$ and $M_k$ vanish on $W$, the subspace $L\subset W$ is invariant under both $N_k$ and $M_k$. Let
\[ U=W+\operatorname{span} \{e_r\}. \]
Then $U$ is also invariant under $N_k$ and $M_k$, and therefore the quotient space $U/L$ is invariant under the induced maps. The quotient $U/L$ is two-dimensional, with basis given by the classes of $e_s$ and $e_r$. In this basis, the induced nilpotent maps have the form
\[ \widetilde N_k=
\begin{bmatrix}
	0&c_1\\
	0&0
\end{bmatrix},
\qquad
\widetilde M_k=
\begin{bmatrix}
	0&c_2\\
	0&0
\end{bmatrix}, \]
where $(c_1,c_2)\neq(0,0)$. Consequently, the induced pair is
\[ \widetilde A_k=
\begin{bmatrix}
	a_k&c_1\\
	0&a_k
\end{bmatrix},
\qquad
\widetilde B_k=
\begin{bmatrix}
	b_k&c_2\\
	0&b_k
\end{bmatrix}. \]
Moreover, if the original block system ${\cal S}(A_k,B_k)$ were Lyapunov stable, then every quotient system obtained from common invariant subspaces would also be Lyapunov stable. Therefore, it is enough to prove that the quotient system ${\cal S}(\widetilde A_k,\widetilde B_k)$ is not Lyapunov stable.

We first assume that \(a_k > 0\) and \(b_k > 0\).  For every $i,j \in \N$, we have 
	\[ \widetilde A_k^i \widetilde B_k^j e_2 = a_k^ib_k^j e_2+ a_k^i b_k^j \left( \frac{i c_1}{a_k}+\frac{j c_2}{b_k} \right)e_1, \]
and therefore
	\begin{align*} 
	 \sum_{l=0}^{i-1} W_{l,j-1}(\widetilde A_k, \widetilde B_k) \widetilde B_k e_2  
	 & =  \sum_{l=0}^{i-1} \binom{j-1+l}{l}  \widetilde A_k^l \widetilde B_k^j e_2 \\
	 & =  \sum_{l=0}^{i-1} \binom{j-1+l}{l} \left( a_k^l b_k^j e_2+ a_k^l b_k^j \left( \frac{l c_1}{a_k}+\frac{j c_2}{b_k} \right)  e_1 \right)
	\end{align*}
	For each fixed \(j\), set \(x_{0,s}=0\) for all \(s\geq1\), and choose the lower boundary values \(x_{i-l,0}\), \(0\leq l\leq i-1\), to be unimodular multiples of \(e_2\) so that all nonzero \(e_1\)-components in the sum have the same complex direction. Then, the $e_1$-direction of $x_{i,j}$ satisfies
	\[ \bigl|[x_{i,j}]_{e_1}\bigr|  =  \sum_{l=0}^{i-1}  \binom{j-1+l}{l} a_k^l b_k^j \left| \frac{l c_1}{a_k}+\frac{j c_2}{b_k} \right| \]
	Letting \(i\) be large enough, it is therefore enough to show that
	\[ S_j:= \sum_{l=0}^{\infty} \binom{j-1+l}{l} a_k^l b_k^{j} \left| \frac{l c_1}{a_k}+\frac{ j c_2}{b_k} \right| \]
	is unbounded as \( j \to\infty\). Indeed, for each fixed \(j\), the finite sums above increase to \(S_j\); hence we may choose \(i=i_j\) large enough so that the finite sum is at least
	\(S_j/2\). The boundary data may depend on \(j\), since Lyapunov stability would require a uniform bound over all admissible bounded boundary data.
	
	Now let $X_j$ be a random variable with negative binomial probability distribution with parameters $j$ and $b_k$.  The expected value and variance of $X_j$ are
	\[ \mathbb E [ X_j] =\frac{ja_k}{b_k}, \qquad \operatorname{Var}[X_j]=\frac{j a_k}{b_k^2}. \]
	Hence, $S_j$ can be expressed as 
	\[ S_j = \mathbb E \left[  \left| \frac{c_1}{a_k}X_j+\frac{jc_2}{b_k} 	\right| \right]  \]
	If $c_1 + c_2 \neq 0$, then 
	\[ S_j \geq \left| \mathbb E \left[  \frac{c_1}{a_k}X_j+\frac{jc_2}{b_k} \right] \right| = \frac{j}{b_k} |c_1 + c_2|  \]
	so \(S_j \to\infty\).
	
	On the other hand, if $c_1 + c_2 =0$, then necessarily \(c_1\neq0\), because \((c_1,c_2)\neq(0,0)\). In this case
	\[ S_j = \frac{|c_1|}{a_k} \mathbb E \left[ \, \left|X_j-\mathbb E [X_j ] \right| \, \right].\]
	The random variable  \(X_j\) is a sum of \(j\) independent geometric random variables with fixed success probability \(b_k \), so
	\(\operatorname{Var}[X_j] \sim j\) and its fourth central moment is \(O(j^2)\). By Hölder's inequality,
	\[ \mathbb E [ \, |W| \,] \ge \frac{(\mathbb E [ \, W^2 \, ] )^{3/2}}{(\mathbb E [ \,  W^4 \, ] )^{1/2}}, \qquad W=X_j-\mathbb E [ \,  X_j \, ], \]
	which gives
	\[ \mathbb E [ \, |X_j-\mathbb E [ \, X_j \, ]  \, | \, ] \ge C\sqrt{j}. \]
	Thus \(S_j \to\infty\) in this case as well.
	
	It remains to consider the degenerate cases. Suppose first that \(a_k=0\) -- the case where $b_k=0$ is similar. Then \(b_k=1\). If \(c_2\neq0\), then \( \widetilde B_k\) is a nontrivial Jordan block with unimodular eigenvalue. Taking \(x_{1,0}=e_2\), \(x_{r,0}=0\) for \(r\neq1\), and \(x_{0,s}=0\), Proposition~\ref{BigForm} gives
	\[ x_{1,j}=\widetilde B_k^j e_2 = b_k^j e_2+j b_k^{j-1}c_2 e_1, \]
	which is unbounded. If \(c_2=0\), then \(c_1\neq0\). Taking \(x_{1,0}=e_2\), \(x_{r,0}=0\) for \(r\neq1\), and \(x_{0,s}=0\), we obtain, for \(i=2\),
	\[ x_{2,j} = W_{1,j-1}(\widetilde A_k, \widetilde B_k) \widetilde B_k e_2 = j b_k^j c_1 e_1,\]
	which is unbounded. 	
	
	Thus, in every case with \(|a_k|+|b_k|=1\) and \((c_1,c_2)\neq(0,0)\), bounded boundary data can produce an unbounded solution. Hence \(\mathcal S(A,B)\) is not Lyapunov stable.
\end{proof}

As a simple corollary of the preceding results, we provide some useful sufficient conditions for the stability of the fixed point $x=0$ in terms of the spectral radii of $A$ and $B$. 

\begin{cor} \label{spectral radius cond}
Let $A$, $B \in \mathcal{M}_n(\C)$ be two commuting matrices.  If $\rho(A)+\rho(B)<1$, then $\mathcal{S}(A,B)$ is asymptotically stable. 
\end{cor}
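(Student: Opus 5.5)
The plan is to reduce Corollary~\ref{spectral radius cond} to part (a) of Theorem~\ref{commutative}. Since $A$ and $B$ commute, Lemma~\ref{OV} supplies an invertible $X$ such that $X^{-1}AX=\operatorname{diag}(A_1,\dots,A_m)$ and $X^{-1}BX=\operatorname{diag}(B_1,\dots,B_m)$. Each block $A_k$ has a single eigenvalue $a_k$, and each block $B_k$ has a single eigenvalue $b_k$. It then suffices to verify that $|a_k|+|b_k|<1$ for every $k$, and Theorem~\ref{commutative}(a) gives asymptotic stability.

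The key observation is that every eigenvalue of a diagonal block is an eigenvalue of the full matrix. The spectrum of a block diagonal matrix is the union of the spectra of its blocks, and similarity preserves the spectrum. Hence $a_k\in\sigma(A)$ and $b_k\in\sigma(B)$, which gives $|a_k|\le\rho(A)$ and $|b_k|\le\rho(B)$. Consequently
\[ |a_k|+|b_k|\le \rho(A)+\rho(B)<1 \qquad (k=1,\dots,m), \]
and the hypothesis of Theorem~\ref{commutative}(a) holds. No further estimates are needed, because all the analytic work (the bounds $\|A_k^r\|\le C_1(a_k+\varepsilon)^r$ and the negative binomial series) was already done in the proof of that theorem.

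There is no real obstacle; the only point to state explicitly is that the pairing of eigenvalues produced by Lemma~\ref{OV} plays no role here. The hypothesis bounds every pair $(a_k,b_k)$ simultaneously, regardless of how the eigenvalues are matched, and this is what makes the condition merely sufficient rather than sharp. The spectral radius bound $\rho(A)+\rho(B)<1$ can be much stronger than the paired condition. For example, in the diagonal case $A=\operatorname{diag}(a_1,a_2)$, $B=\operatorname{diag}(b_1,b_2)$ with $|a_1|,|b_2|$ close to $1$ and $|a_2|,|b_1|$ close to $0$, the paired condition holds while $\rho(A)+\rho(B)$ is close to $2$. This could be added as a brief remark after the proof.
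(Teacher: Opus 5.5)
Your proposal is correct and is essentially the paper's proof: the paper also takes the paired eigenvalues $a_k,b_k$ from the block decomposition of Lemma~\ref{OV}, notes that $|a_k|+|b_k|\le\rho(A)+\rho(B)<1$, and invokes Theorem~\ref{commutative}(a). Your justification that $a_k\in\sigma(A)$ and $b_k\in\sigma(B)$ (the spectrum of a block-diagonal matrix is the union of the block spectra, and similarity preserves spectra) spells out a step the paper leaves implicit.
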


\begin{proof}
	Let $a_k$ and $b_k$ be as in Theorem \ref{commutative}. Since
	\[  |a_k| + |b_k| \leq \rho(A) + \rho(B) < 1\]
	for all $k$, the conclusion follows from part \emph{(a)} of the same theorem. 
\end{proof}

\begin{ex} \label{ex1}
The previous corollary does not hold if $A$ and $B$ do not commute. To see this,  consider the nilpotent matrices
\[ A = \begin{bmatrix} 0 & 1 \\ 0 & 0 \end{bmatrix} \quad \mbox{ and } \quad 
B = \begin{bmatrix} 0 & 0 \\ 2 & 0 \end{bmatrix}
\]
Then $\rho(A)=\rho(B)=0$, but $\rho(A+B) = \sqrt{2}$,  so, by Corollary~\ref{unstablegeneral}, ${\cal S}(A,B)$ is unstable.
\end{ex} 

\begin{ex}
If $A$ and $B$ commute, $A$ is nilpotent and $\rho(B)<1$, then ${\cal S}(A,B)$ is asymptotically stable since $\rho(A)=0$. As the previous example shows, this is not true if $A$ and $B$ do not commute. 
\end{ex}

As the following examples illustrate, outside the sufficient condition in Corollary \ref{spectral radius cond}, no definitive conclusion regarding the stability of ${\cal S}(A,B)$ can be made solely in terms of the spectral radii of $A$ and $B$. 

\begin{ex} \label{ex2}
When $\rho(A)+\rho(B) = 1$, the fixed point $x=0$ may be asymptotically stable, neutrally stable or unstable, depending on the specific structure of the matrices. For instance, by Theorem \ref{commutative}:
\begin{itemize}
\item ${\cal S}(A,B)$ is asymptotically stable for 
\[  A = \begin{bmatrix} 0.2 & 0 \\ 0 & 0.5 \end{bmatrix} \quad \mbox{and} \quad  B = \begin{bmatrix} 0.5 & 0 \\ 0 & 0.4\end{bmatrix}, \]
\item ${\cal S}(A,B)$ is neutrally stable for 
\[  A = \begin{bmatrix} 0.2 & 0 \\ 0 & 0.5 \end{bmatrix} \quad \mbox{and} \quad  B = \begin{bmatrix} 0.5 & 0 \\ 0 & 0.5\end{bmatrix}.\]
\item ${\cal S}(A,B)$ is unstable for 
\[  A = \begin{bmatrix} 0.5 & 0 \\ 0 & 0.5 \end{bmatrix} \quad \mbox{and} \quad  B = \begin{bmatrix} 0.5 & 1 \\ 0 & 0.5\end{bmatrix}.\]
\end{itemize}
\end{ex}

\begin{ex} \label{ex3} 
Similarly, no definitive conclusion can be drawn regarding the stability of the fixed point $x=0$ when $\rho(A)+\rho(B)>1$, even if $\rho(A),\rho(B)<1$. In this regime, ${\cal S}(A,B)$ may  be asymptotically stable, neutrally stable or unstable, as illustrated by the following three examples:
\begin{itemize}
\item ${\cal S}(A,B)$ is asymptotically stable for
\[  A = \begin{bmatrix} 0.2 & 0 \\ 0 & 0.5 \end{bmatrix} \quad \mbox{and} \quad  B = \begin{bmatrix} 0.6 & 0 \\ 0 & 0.4\end{bmatrix}, \]
\item ${\cal S}(A,B)$ is neutrally stable for 
\[  A = \begin{bmatrix} 0.2 & 0 \\ 0 & 0.5 \end{bmatrix} \quad \mbox{and} \quad  B = \begin{bmatrix} 0.6 & 0 \\ 0 & 0.5\end{bmatrix}, \]
\item ${\cal S}(A,B)$ is unstable for
\[  A = \begin{bmatrix} 0.2 & 0 \\ 0 & 0.5 \end{bmatrix} \quad \mbox{and} \quad  B = \begin{bmatrix} 0.5 & 0 \\ 0 & 0.6\end{bmatrix}.\]
\end{itemize}
\end{ex}

\section{The general matrix case}

In this section, we derive several general results concerning the behavior of the two-dimensional system \(\mathcal{S}(A,B)\), where \(A\) and \(B\) are arbitrary complex matrices. We begin with a fundamental result giving several equivalent characterizations of asymptotic stability. Some of these equivalences are classical. In particular, the equivalence between asymptotic stability and the determinant condition in part~\((d)\) is closely related to the internal-stability criterion of Fornasini and Marchesini \cite{FoMa78,FoMa80}, although their formulation uses boundary conditions prescribed along diagonals rather than along the coordinate axes used here. The spectral-radius condition in part~\((b)\) also appears in the literature; see, for instance, \cite{EbItHa06,FoMa78}. The purpose of the theorem below is to record these criteria in the bounded-boundary/Pringsheim framework of the present paper and to include the torus condition in part~\((c)\), which, to the best of our knowledge, is new.

\begin{theo}[Fornasini--Marchesini]  \label{bigTH}
Let $A,B \in \mathcal{M}_n(\C)$. The following conditions are equivalent:
\begin{enumerate}
 \item[(a)] The system $\mathcal{S}(A,B)$ is asymptotically stable;
 \item[(b)] $\rho(A+e^{i\omega}B)<1$ for every $\omega \in \R$;
 \item[(c)] $\rho(z_1 A+z_2 B)<1$ for every $(z_1,z_2) \in  \overline{\mathbb D}^2$;
 \item[(d)] $\det(I-z_1 A - z_2 B ) \neq 0$ for every $(z_1,z_2) \in \overline{\mathbb{D}}^2$.
\end{enumerate}
\end{theo}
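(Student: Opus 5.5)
The plan is to prove the cycle of equivalences \((c)\Leftrightarrow(d)\), \((b)\Leftrightarrow(c)\), \((c)\Rightarrow(a)\) and \((a)\Rightarrow(c)\). The last of these is the real content.

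\emph{\((c)\Leftrightarrow(d)\) and \((b)\Leftrightarrow(c)\).} If \(\det(I-z_1A-z_2B)=0\) then \(1\) is an eigenvalue of \(z_1A+z_2B\), so \((c)\Rightarrow(d)\). Conversely, suppose \(\rho(z_1A+z_2B)\ge 1\) for some \((z_1,z_2)\in\overline{\mathbb D}^2\). Pick an eigenvalue \(\lambda\) with \(|\lambda|\ge1\); then \((z_1/\lambda,z_2/\lambda)\in\overline{\mathbb D}^2\) and \(\det(I-\tfrac{z_1}{\lambda}A-\tfrac{z_2}{\lambda}B)=0\), so \((d)\Rightarrow(c)\).

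The direction \((c)\Rightarrow(b)\) is trivial. For \((b)\Rightarrow(c)\) I would use Vesentini's theorem that \(z\mapsto\rho(F(z))\) is subharmonic for a holomorphic matrix family \(F\). Fix \(z_2\in\overline{\mathbb D}\); the maximum principle in \(z_1\) moves the maximum of \(\rho(z_1A+z_2B)\) to \(|z_1|=1\). Then, for \(|z_1|=1\), apply it again in \(z_2\) to move it to \(\mathbb T^2\). On \(\mathbb T^2\) homogeneity gives \(\rho(z_1A+z_2B)=\rho(A+\bar z_1z_2B)<1\) by \((b)\). Strictness is preserved because the maximum over the compact set \(\mathbb T^2\) is attained, so the supremum over the bidisk is that attained value, which is \(<1\).

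\emph{\((c)\Rightarrow(a)\).} Since \(\det(I-z_1A-z_2B)\) is continuous and, by \((d)\), nonvanishing on the compact set \(\overline{\mathbb D}^2\), it is nonvanishing on a polydisk of radius \(r>1\). Hence the formal series \((I-z_1A-z_2B)^{-1}=\sum W_{p,q}z_1^pz_2^q\) from the proof of Proposition~\ref{BigForm} is holomorphic there. The Cauchy estimates give \(\|W_{p,q}\|\le C r^{-p-q}\). Inserting this into \eqref{Tij}, the first sum is bounded by \(C\|A\|\,r^{-(i-1)}(1-r^{-1})^{-1}\|\xi\|\) and the second by \(C\|B\|\,r^{-(j-1)}(1-r^{-1})^{-1}\|\xi\|\). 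So \(\|x_{i,j}\|\le C'\|\xi\|\) uniformly, and \(x_{i,j}\to0\) in the Pringsheim sense; this is Lyapunov stability and attractivity at once. Alternatively one may appeal to Theorem~\ref{theo:att Lyap} for the Lyapunov part.

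\emph{\((a)\Rightarrow(c)\), the main obstacle.} The goal is to show that asymptotic stability forces exponential decay of \(\sum_q\|W_{p,q}\|\) and \(\sum_p\|W_{p,q}\|\). That decay makes the Neumann series converge on a neighbourhood of \(\overline{\mathbb D}^2\), which gives \((d)\).

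I would proceed in three steps.

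1. Test the first contribution \(U_{i,j}\) in \eqref{Tij} against \(\ell^\infty\) data on the \(j\)-axis, with \(j\to\infty\). The rows of the matrix sequence \((W_{i-1,k}A)_k\) lie in \(\ell^1\). Attractivity says these sequences tend to \(0\) when paired with every element of \(\ell^\infty=(\ell^1)^*\); that is, they tend to \(0\) weakly in \(\ell^1\). By Schur's property of \(\ell^1\) they then tend to \(0\) in norm, giving \(a_p:=\sum_k\|W_{p,k}A\|\to0\). The symmetric argument gives \(b_q:=\sum_p\|W_{p,q}B\|\to0\).

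2. Obtain an approximate submultiplicativity \(a_{p+p'}\le K\,a_{p}\,a_{p'}\). The idea is to cut each word after its \(p\)-th letter \(A\), using the word-splitting identity \(W_{i,j}=W_{i-1,j}A+W_{i,j-1}B\) iterated. The constant \(K\) should be controlled by \(\sup_p\sum_q\|W_{p,q}\|\), which is finite by Lemma~\ref{lem:Attractive} and the negative binomial estimate in the proof of Theorem~\ref{theo:att Lyap}.

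3. Once \(a_P<1/K\), conclude that \(a_p\) decays geometrically, and likewise \(b_q\). From this, together with \(W_{p,q}=W_{p-1,q}A+W_{p,q-1}B\), derive a bound \(\|W_{p,q}\|\le C r^{-p-q}\) for some \(r>1\). That bound gives \((d)\) on \(\overline{\mathbb D}^2\).

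The splitting estimate in step 2, and making the geometric decay joint in \((p,q)\) in step 3, are where I expect the difficulty to lie. If this route stalls, the fallback is to argue by contradiction. Suppose \(\rho(A+e^{i\omega}B)\ge1\); by Remark~\ref{rem2} take \(\omega=0\), and let \(\lambda\), \(|\lambda|\ge1\), be an eigenvalue of \(A+B\) with eigenvector \(v\). Then construct bounded boundary data with unimodular phases, in the spirit of Lemma~\ref{lem:Attractive}, so that the anti-diagonal sums of Proposition~\ref{anti} stay comparable to \(n\|v\|\). Combined with the uniform bound from Theorem~\ref{theo:att Lyap}, this would force \(\|x_{i,j}\|\) to stay bounded away from zero deep in the quadrant, contradicting attractivity.
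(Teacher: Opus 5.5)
Your arguments for $(b)\Leftrightarrow(c)\Leftrightarrow(d)$ and for $(c)\Rightarrow(a)$ are sound. The scaling $(z_1,z_2)\mapsto(z_1/\lambda,z_2/\lambda)$ for $(d)\Rightarrow(c)$ is fine. So is your observation that the Cauchy-estimate bound on $x_{i,j}$ is already uniform in $i,j\ge 1$; that is, if anything, more direct than routing through Theorem~\ref{theo:att Lyap}.

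\textbf{The gap.} It lies in the direction you yourself call the real content, $(a)\Rightarrow(c)$: neither of your routes is actually carried out.
\begin{enumerate}
\item In Step~1, attractivity does not directly say that $(W_{p,k}A)_k\to 0$ weakly in $\ell^1$. The data enter $U_{i,j}$ as $x_{0,j-k}$, read backwards from $j$, so a single bounded boundary sequence does not realize the pairing with a fixed $y\in\ell^\infty$. You would need a gliding-hump construction: reversed copies of $y$ placed in blocks at widely separated positions $j_m$, with cross terms killed using the per-$p$ summability of $\sum_k\|W_{p,k}\|$.
\item Step~2 is only an intention, and the constant you propose is not justified. The negative-binomial bound $C_B^{p+1}\|A\|^p(1-\beta)^{-p-1}$ grows exponentially in $p$, so it does not give $\sup_p\sum_q\|W_{p,q}\|<\infty$.
\item Step~3 is left open.
\item The fallback never specifies the boundary data. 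It also does not separate off the case $|\lambda|>1$, where the natural data $\lambda^k v$ are unbounded and one must instead use compactly supported data as in Corollary~\ref{unstablegeneral}.
\end{enumerate}

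\textbf{The missing idea.} It is the one the paper uses, and it makes all of this machinery unnecessary.
\begin{itemize}
\item If $\rho(A+e^{i\omega}B)>1$, Corollary~\ref{unstablegeneral} together with Remark~\ref{rem2} gives instability.
\item If $\rho(A+e^{i\omega}B)=1$, take $(A+e^{i\omega}B)v=\lambda v$ with $|\lambda|=1$ and $v\neq 0$. Then $x_{i,j}=\lambda^{i+j}e^{-i\omega j}v$ is an exact solution of the recurrence. Its boundary data are bounded and can be scaled below any $\delta$, while $\|x_{i,j}\|=\|v\|$ for all $i,j$. Hence the system is not attractive.
\end{itemize}
Your fallback, made concrete with $\omega=0$ and $x_{k,0}=x_{0,k}=\lambda^k v$, produces exactly this solution. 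Through Proposition~\ref{anti} it gives $y_{n+1}=(n+2)\lambda^{n+1}v$, but at that point the anti-diagonal counting is superfluous.

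\textbf{Your main route can be repaired, at much greater cost.}
\begin{itemize}
\item Cutting each word at its $(p+1)$-st letter $A$ (counting from the left) gives the exact identity $W_{p+p',k}=\sum_{k_1+k_2=k}W_{p,k_1}A\,W_{p'-1,k_2}$. Hence $a_{p+p'}\le a_p\,a_{p'-1}$ with no constant at all.
\item For $(d)$ you only need $\sum_{p,q}\|W_{p,q}\|<\infty$, not a joint bound $Cr^{-p-q}$. This follows from summability of $a_p$ and $b_q$ together with $W_{p,q}=W_{p-1,q}A+W_{p,q-1}B$.
\end{itemize}
Combined with the gliding-hump step and the Schur property, this is a valid but far heavier proof than the two-line plane-wave argument.
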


\begin{proof}
$[(a) \Rightarrow (b)]$. Assume that $\rho(A+e^{i\omega}B) \geq 1$ for some $\omega \in \R$. If $\rho(A+e^{i\omega}B)>1$, then ${\cal S}(A,B)$ is unstable by Corollary \ref{unstablegeneral} and Remark~\ref{rem2}. 

Otherwise, $A+e^{i\omega}B$ has an eigenvalue $\lambda$ with $|\lambda|=1$ and a corresponding eigenvector $v\neq 0$. Define
\[ x_{i,j}=\lambda^{i+j}e^{-i\omega j}v. \]
Then
\begin{align*}
	A x_{i-1,j}+B x_{i,j-1} & = \lambda^{i+j-1}e^{-i\omega j}Av + \lambda^{i+j-1}e^{-i\omega(j-1)}Bv  \\
	& = \lambda^{i+j-1}e^{-i\omega j}(A+e^{i\omega}B)v  \\
	& = \lambda^{i+j}e^{-i\omega j}v = x_{i,j}.
\end{align*}
Thus we obtain a solution of $\mathcal S(A,B)$, with bounded boundary data, that does not converge to zero. Hence ${\cal S}(A,B)$ is not attractive, contradicting asymptotic stability.

\medskip

\noindent $[(b) \Leftrightarrow (c)]$. Clearly, \textit{(c)} implies \textit{(b)}. Conversely, let 
\[  u(z_1,z_2) = \log \rho(z_1 A + z_2 B). \]
By Vesentini's Theorem \cite{ve68}, $u$ is plurisubharmonic on $\C^2$, in the extended-valued sense. In particular, for each fixed value of one variable, it is subharmonic in the other variable. 

Now, if $|z_1|=|z_2|=1$, write $z_2 = e^{i\theta} z_1$, so that 
\[  \rho(z_1 A + z_2 B) = |z_1| \, \rho(  A + e^{i\theta} B) = \rho(A+e^{i\theta}  B) <1. \]
Hence, $u(z_1,z_2)<0$ for all $|z_1|=|z_2|=1$. 

Fix $z_2$ with $|z_2| \leq 1$. Applying the maximum principle for subharmonic functions, in the extended-valued form, to the restriction $z_1 \mapsto u(z_1,z_2)$, we obtain
\[  \max_{|z_1| \leq 1} u(z_1,z_2) = \max_{|z_1|=1} u(z_1,z_2). \]
Now fix $|z_1|=1$. Similarly, $z_2 \mapsto u(z_1,z_2)$ is subharmonic, so
\[  \max_{|z_2| \leq 1} u(z_1,z_2) = \max_{|z_2|=1} u(z_1,z_2). \]
It then follows
\[  \max_{|z_1| \leq 1, |z_2| \leq 1} u(z_1,z_2) = \max_{|z_1|=1, |z_2|=1} u(z_1,z_2). \] 
Hence, $u(z_1,z_2)<0$ for all $(z_1,z_2) \in \overline{\mathbb{D}}^2$, i.e.
\[  \rho(z_1 A + z_2 B) <1 \quad \forall (z_1,z_2) \in \overline{\mathbb{D}}^2.\]

\medskip

\noindent $[(c) \Rightarrow (d)]$. This implication is immediate: if  $\rho(z_1A+z_2B)<1$ for all $|z_1| \leq 1, |z_2| \leq 1$, then the matrix $I-(z_1A+z_2B)$ is invertible whenever $|z_1| \leq 1$, $|z_2| \leq 1$, so $\det(I-z_1A-z_2B) \neq 0$.  

\medskip

\noindent $[(d) \Rightarrow (a)]$. Assume $\det(I-z_1A-z_2B) \neq 0$ for all $(z_1,z_2) \in \overline{\mathbb{D}}^2$. Then, the matrix-valued function
\begin{equation*} 
 F(z_1,z_2) = (I-z_1A-z_2B)^{-1}
\end{equation*} 
is analytic in a neighborhood of $\overline{\mathbb{D}}^2$. In particular,  $F$ admits a power series expansion: 
\begin{equation} \label{Series}
F(z_1,z_2) = \sum_{r=0}^\infty \sum_{s=0}^\infty C_{r,s} z_1^r z_2^s
\end{equation} 
that converges absolutely for $|z_1|,|z_2| \leq  R$ for some $R>1$. We claim that $C_{r,s}=W_{r,s}(A,B)$ for every $r,s$. Indeed, for $|z_k|$ sufficiently small we have the Neumann-series expansion
\[ F(z_1,z_2)=\sum_{k=0}^\infty (z_1A+z_2B)^k. \]
Expanding each power $(z_1A+z_2B)^k$, the coefficient of the monomial $z_1^r z_2^s$ is precisely the sum of all words in the matrices $A,B$ in which $A$ appears exactly $r$ times and $B$ exactly $s$ times, namely $W_{r,s}(A,B)$.  Thus, in a neighborhood of the origin,
\[ F(z_1,z_2)= \sum_{r=0}^\infty \sum_{s=0}^\infty W_{r,s}(A,B) \,  z_1^r z_2^s. \]
By uniqueness of the Taylor expansion of an analytic function, it follows that $C_{r,s}=W_{r,s}(A,B)$ for all $r,s$. Consequently,
\begin{equation}   \label{Series-W}
	F(z_1,z_2)=  \sum_{r=0}^\infty \sum_{s=0}^\infty W_{r,s}(A,B) \,  z_1^r z_2^s
\end{equation}
throughout the closed bidisk of convergence $|z_k| \leq R$. Therefore,
\begin{equation} \label{MR}
M:=\sum_{i=0}^\infty \sum_{j=0}^\infty \| W_{i,j}(A,B) \| R^{i+j} < \infty.
\end{equation}
It follows that
\begin{equation*}
 \| W_{i,j}(A,B) \| < M R^{-(i+j)} \qquad \forall \, i,j \in \N.
\end{equation*}
Thus, if 
\[ \sup_{i\geq 1} \| x_{i,0}\| < \delta \quad \text{ and } \quad
\sup_{j\geq 1} \|x_{0,j}\| < \delta,
\]
the representation formula \eqref{BigFormEqn} gives us
\begin{align*}
 \|x_{i,j} \| & \leq  M \delta \|A \| \sum_{k=0}^{j-1} R^{-(i-1+k)}  + M \delta \| B \| \sum_{k=0}^{i-1} R^{-(j-1+k)} \\ 
  & < M \left(\| A \| R^{1-i}+ \| B\| R^{1-j} \right) \delta \, \frac{R}{R-1}
\end{align*}
which implies that $x_{i,j} \to 0$ as $i,j \to \infty$.  It follows that ${\cal S}(A,B)$ is attractive, and thus asymptotically stable by Theorem \ref{theo:att Lyap}.  
\end{proof}

\begin{rem}
	The proof above  is independent of the original argument of Fornasini and Marchesini, which relies on Huang's result concerning zeros of matrix polynomials in the closed bidisk. In particular, our proof uses the explicit representation formula for the solutions and the analytic expansion of \((I-z_1A-z_2B)^{-1}\).
\end{rem}

\begin{rem}
The asymptotic stability of $\mathcal{S}(A,B)$ in the commutative case is a simple consequence of the Fornasini--Marchesini Theorem. Indeed, if we denote by $a_k$ and $b_k$ the respective eigenvalues of $A$ and $B$ as in Theorem \ref{commutative}, then we have
\[  |a_k|+|b_k| \leq \max_{(z_1,z_2) \in \overline{\mathbb{D}}^2 } \max_{k} |z_1 a_k +z_2 b_k| = \max_{(z_1,z_2) \in \overline{\mathbb{D}}^2} \rho(z_1 A + z_2 B) < 1. \]
\end{rem}

\begin{rem} 
The proof of Theorem~\ref{bigTH} also shows that the maximum of \(\rho(z_1A+z_2B)\) over the closed bidisk is attained on the torus. More precisely,
\[ \max_{(z_1,z_2)\in \overline{\mathbb{D}}^2} \rho(z_1A+z_2B) = \max_{(z_1,z_2)\in \mathbb{T}^2} \rho(z_1A+z_2B). \]
\end{rem}

The next corollary implies that asymptotic stability is maintained if $A$ and $B$ are slightly perturbed. That is, if ${\cal S}(A,B)$ is asymptotically stable, then ${\cal S}(A',B')$ will be asymptotically stable, provided that $\|A-A'\|$ and $\|B-B'\|$ are small enough.

\begin{cor} \label{open}
For $n\ge 2$, the set of pairs $(A,B)\in \mathcal{M}_n(\C)\times\mathcal{M}_n(\C)$ for which ${\cal S}(A,B)$ is asymptotically stable is an unbounded, open, star-shaped subset of $\mathcal{M}_n(\C)\times\mathcal{M}_n(\C)$. 
\end{cor}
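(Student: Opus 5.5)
We would work throughout with the characterization in Theorem~\ref{bigTH}(c). Let $\mathcal{A}$ denote the set of pairs $(A,B)$ for which $\mathcal{S}(A,B)$ is asymptotically stable. By the theorem, $(A,B)\in\mathcal{A}$ if and only if
\[ m(A,B):=\max_{(z_1,z_2)\in\overline{\mathbb D}^2}\rho(z_1A+z_2B)<1 . \]
The three properties (open, star-shaped, unbounded) would then be verified separately from this description.

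\emph{Openness.} The map $(A,B,z_1,z_2)\mapsto \rho(z_1A+z_2B)$ is continuous, since eigenvalues depend continuously on the entries. The bidisk $\overline{\mathbb D}^2$ is compact, so $m$ is continuous in $(A,B)$; this is the standard fact that the maximum over a compact parameter set of a jointly continuous function is continuous. Hence $\mathcal{A}=m^{-1}([0,1))$ is open. If one prefers to avoid invoking that fact, a direct argument works. Suppose $\rho(z_1A+z_2B)<1$ on the compact bidisk. Then $\det(I-z_1A'-z_2B')$ is uniformly close to $\det(I-z_1A-z_2B)$ on $\overline{\mathbb D}^2$ for $(A',B')$ near $(A,B)$. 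The latter determinant is bounded away from zero there by compactness and Theorem~\ref{bigTH}(d), so condition (d) persists under small perturbations.

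\emph{Star-shapedness.} We take $(0,0)$ as the center; it clearly lies in $\mathcal{A}$. For $t\in[0,1]$ we have $\rho(z_1tA+z_2tB)=t\,\rho(z_1A+z_2B)<1$ on the bidisk. So $(tA,tB)\in\mathcal{A}$ whenever $(A,B)\in\mathcal{A}$.

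\emph{Unboundedness.} This is the only step that needs $n\ge 2$, and it is the main point requiring a construction. We would take nilpotent commuting pairs, for instance $A=cN$ and $B=0$ with $N=\begin{bmatrix}0&1\\0&0\end{bmatrix}$, embedded in the top-left corner of an $n\times n$ zero matrix, and $c>0$ arbitrary. Then $z_1A+z_2B=z_1cN$ is nilpotent for every $(z_1,z_2)$, so $\rho\equiv 0<1$. By Theorem~\ref{bigTH}, or alternatively Corollary~\ref{spectral radius cond} since $A$ and $B$ commute with $\rho(A)+\rho(B)=0$, the pair $(A,B)$ lies in $\mathcal{A}$, while $\|A\|=c\to\infty$.

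For $n=1$ no such example exists. In the scalar case $\mathcal{A}=\{|a|+|b|<1\}$ by Theorem~\ref{Scalar}, which is bounded; this explains the hypothesis $n\ge2$. The only subtle point in the whole argument is justifying the continuity of $m$, and the determinant route above sidesteps it.
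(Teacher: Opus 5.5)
Your proposal is correct and follows the paper's proof essentially step for step: openness from the continuity of $\max_{\overline{\mathbb D}^2}\rho(z_1A+z_2B)$ over the compact bidisk, star-shapedness from the homogeneity of $\rho$, and unboundedness from a scaled nilpotent $A$ with $B=0$. The paper uses a full $n\times n$ nilpotent Jordan block where you embed a $2\times 2$ one, and your determinant-based alternative for openness and your remark on why $n\ge 2$ is needed are valid but inessential additions.
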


\begin{proof}
By Theorem~\ref{bigTH}, it suffices to prove that 
\[  \mathcal{A} := \{ (A,B) : \rho(z_1 A +z_2 B) < 1 \text{ for all } (z_1,z_2) \in \overline{\mathbb{D}}^2 \} \]
is an unbounded open, star-shaped set.  The closed bidisk $\overline{\mathbb{D}}^2$ is compact, and since the spectral radius is a continuous function of matrices, $\rho(z_1 A +z_2B)$ achieves its maximum on $|z_1|, |z_2|\leq 1$. Thus
\[ f(A,B) = \max_{|z_1|, |z_2| \leq 1} \rho(z_1A+z_2 B) =  \max_{|z_1| = |z_2| = 1} \rho(z_1A+z_2 B) \]
is continuous, and ${\cal S}(A,B)$ is asymptotically stable if and only if $f(A,B) < 1$. Since $f$ is continuous, $\mathcal{A} = f^{-1}((-\infty,1))$ is an open set.

 The homogeneity of the spectral radius implies that 
\[  \rho( z_1 (tA) + z_2 (tB)) = t \rho(z_1 A + z_2 B) <1 \]
for every $t \in [0,1]$ and $(z_1,z_2) \in \overline{\mathbb{D}}^2$. Hence,  $(tA,tB) \in \mathcal{A}$ whenever $(A,B) \in \mathcal{A}$, i.e. $\mathcal{A}$ is star-shaped with center at the origin. 

Finally,  let $N \in \mathcal{M}_n(\C)$ be a  single Jordan block with zeros along the diagonal. For any $t>0$, set $A=tN$ and $B=0$. It follows that 
\[  \rho(z_1 A + z_2 B) = \rho( tN) =0 <1, \] 
for every $(z_1,z_2) \in \overline{\mathbb{D}}^2$, so $(A,B) \in \mathcal{A}$.  But $\| A\|  = t \|N\| \to \infty$, so $\mathcal{A}$ is unbounded.
\end{proof}

We now turn our attention to the situations where $\rho(z_1 A +z_2 B) \geq 1$ for some $(z_1,z_2) \in \partial \overline{\mathbb{D}}^2$, the boundary of the closed  bidisk $\overline{\mathbb{D}}^2$. From the proof of the Fornasini--Marchesini Theorem, it suffices to consider the case $(z_1,z_2) \in \mathbb{T}^2$.  We begin by stating conditions that guarantee the instability of $\mathcal{S}(A,B)$. 

\begin{theo} \label{unstabilityTH}
Let $A,B \in \mathcal{M}_n(\C)$. Then, ${\mathcal S}(A,B)$ is unstable if one of the following conditions holds:
\begin{itemize}
  \item[(a)] $\rho(z_1A+z_2B) >1$ for some $(z_1,z_2) \in \overline{\mathbb{D}}^2$.
  \item[(b)] $\rho(z_1A+z_2B)=1$ for some $(z_1,z_2) \in \mathbb{T}^2$ provided that at least one of the unimodular eigenvalues of $z_1 A + z_2 B$ is defective. 
\end{itemize}
In particular, if the 1D system associated with $z_1A+z_2B$ is unstable for some $(z_1,z_2)\in\mathbb{T}^2$, then ${\cal S}(A,B)$ is unstable.
\end{theo}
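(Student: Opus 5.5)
For part (a), I will first reduce to the torus. If $\rho(z_1A+z_2B)>1$ for some $(z_1,z_2)\in\overline{\mathbb D}^2$, then by the maximum-principle argument in the proof of Theorem~\ref{bigTH} (plurisubharmonicity of $\log\rho(z_1A+z_2B)$ via Vesentini's theorem) there is $(w_1,w_2)\in\mathbb T^2$ with $\rho(w_1A+w_2B)>1$. By Remark~\ref{rem2}, $\mathcal S(A,B)$ and $\mathcal S(w_1A,w_2B)$ have the same stability type. Corollary~\ref{unstablegeneral} applied to the pair $(w_1A,w_2B)$ then gives instability.

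For part (b), I again use Remark~\ref{rem2} to replace $(A,B)$ by $(z_1A,z_2B)$. This reduces to $C:=A+B$ with $\rho(C)=1$ and a defective unimodular eigenvalue $\lambda$. Choose a Jordan chain $v_1,v_2$ with $Cv_1=\lambda v_1$ and $Cv_2=\lambda v_2+v_1$. Then
\[ C^m v_2=\lambda^m v_2+m\lambda^{m-1}v_1, \]
so $\|C^m v_2\|$ grows linearly in $m$. I then mimic the proof of Corollary~\ref{unstablegeneral}: take boundary data $x_{1,0}=x_{0,1}=v_2$ and all other boundary values zero. By Proposition~\ref{anti} this gives $f_0=2v_2$, $f_1=-Cv_2$ and $f_k=0$ for $k\ge2$, hence
\[ y_n=C^{n-1}v_2=\lambda^{n-1}v_2+(n-1)\lambda^{n-2}v_1, \]
so $\|y_n\|\ge c\,n$ for large $n$.

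The main obstacle is here. The bound $\|y_n\|\le (n+1)\sup\|x_{k,n-k}\|$ only rules out boundedness when the anti-diagonal sums grow faster than linearly, so linear growth alone gives no contradiction. I will instead use bounded boundary data supported on all of the boundary, so that the forcing terms $f_k$ accumulate. The first choice to try is $x_{k,0}=\mu^k u$ and $x_{0,k}=\mu^k u$ with $|\mu|=1$, for a suitable vector $u$. For $k\ge1$ this gives $f_k=\mu^{k}(\mu-C)u$. Taking $\mu=\lambda$ and $u=v_2$ yields $(\lambda-C)v_2=-v_1$, and then
\[ y_{n+1}=\sum_{k\le n} C^{n-k}f_k=-\sum_{k} \lambda^{k}C^{n-k}v_1+\dots=-(n+1)\lambda^{n}v_1+\cdots. \]
The boundary terms $f_0$ and the $v_2$ contributions contribute $O(n)$ as well, so this choice does not beat the factor $n+1$ either. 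I therefore expect to need a longer chain ($C^{n}$ on a Jordan block of size at least $3$ gives $n^2$ growth), or a direct argument instead. The direct argument goes as follows. Suppose $\mathcal S$ is Lyapunov stable, so that $\|T_{i,j}\|\le M$. Then the averaged quantity $y_n/(n+1)$ stays bounded for every bounded boundary data. Since $y_n$ depends linearly on the data, I will choose data whose forcing $f_k$ aligns with $\lambda^{k}(\text{generalized eigenvector})$ at the right resonant rate; this should produce $y_n\sim n^2$, because resonant forcing of a $2$-dimensional Jordan block yields quadratic growth. Concretely, I will take $f_k\approx\lambda^k v_2$ (achievable since $f_k=\lambda^k(\lambda-C)u$ can be made proportional to $v_2$ when $v_2\in\operatorname{Ran}(\lambda-C)$, or by using a different $\mu$ close to $\lambda$ and optimizing). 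Then
\[ \sum_k C^{n-k}\lambda^k v_2=(n+1)\lambda^n v_2+\binom{n+1}{2}\lambda^{n-1}v_1, \]
which is quadratic in $n$ and contradicts the linear bound $\|y_n\|\le (n+1)M\|\xi\|$.

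If the range condition fails, the fallback is to use $\mu=\lambda e^{i\epsilon}$ with an $\epsilon$-dependent normalization and let $\epsilon\to0$. Alternatively, I will exhibit the unboundedness directly via Lemma~\ref{lem:uniform}-type uniform bounds on $T_{i,j}$. The final sentence of the theorem then follows, since instability of the 1D system for $z_1A+z_2B$ with $(z_1,z_2)\in\mathbb T^2$ is exactly case (a) or case (b).
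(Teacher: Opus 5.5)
\textbf{Part (a)} is fine and matches the paper: the maximum of $\rho(z_1A+z_2B)$ over $\overline{\mathbb D}^2$ is attained on $\mathbb T^2$, and then Remark~\ref{rem2} and Corollary~\ref{unstablegeneral} apply. The final sentence also follows as you say.

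\textbf{Part (b) does not close as written}, because of a computational slip. For $x_{k,0}=x_{0,k}=\mu^k u$ and $k\ge1$, both axes contribute a term $\mu^{k+1}u$, so
\[ f_k=x_{k+1,0}+x_{0,k+1}-Ax_{k,0}-Bx_{0,k}=\mu^k(2\mu I-C)u, \]
not $\mu^k(\mu I-C)u$. Because the factor $2$ is missing, you discarded the choice $\mu=\lambda$, $u=v_2$. You replaced it with a construction that needs $v_2\in\operatorname{Ran}(\lambda I-C)$.

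That condition fails whenever the largest Jordan block of $\lambda$ has size $2$, which is the most basic defective case. Indeed, $v_2=(C-\lambda I)w$ would give $(C-\lambda I)^2w=v_1\neq0$, i.e.\ a chain of length three. Your fallbacks ($\mu=\lambda e^{i\epsilon}$ with $\epsilon\to0$, or ``Lemma~\ref{lem:uniform}-type bounds'') are not actual arguments. So the one step that carries the content of (b) is not established: producing superlinear growth of $y_n$ to beat $\|y_n\|\le (n+1)\max\{M,1\}\|\xi\|$.

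\textbf{The repair is immediate.} Since $|2\lambda|=2>\rho(C)$, the matrix $2\lambda I-C$ is invertible. Set $u=(2\lambda I-C)^{-1}v_2\in\operatorname{span}\{v_1,v_2\}$. This gives exactly $f_k=\lambda^k v_2$ for $k\ge1$, and your own computation yields
\[ y_{n+1}=\binom{n}{2}\lambda^{n-1}v_1+O(n). \]
Your original choice $u=v_2$ also works. Then $f_k=\lambda^k(\lambda v_2-v_1)$, and $y_{n+1}=\binom{n}{2}\lambda^{n}v_1+O(n)$. In both cases the growth is quadratic, which contradicts Lyapunov stability.

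With this correction, your argument is essentially the paper's. The paper normalizes $\lambda=1$ and takes constant boundary data equal to the top vector $w_d$ of a Jordan chain; this is your $\mu=\lambda$, $u=v_2$. It then detects the growth through $X(z,z)=\sum_n y_nz^n$, which has a pole of order $d+2\ge3$ at $z=1$. A bounded solution would instead force $\|X(r,r)\|\le C(1-r)^{-2}$. Your direct use of Proposition~\ref{anti} with a length-two chain is a more explicit version of the same resonance mechanism.
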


\begin{proof}
\emph{(a)} By Remark 7, $\rho(z_1 A + z_2 B)>1$ for some $(z_1,z_2) \in \mathbb{T}^2$. The conclusion then follows as a simple consequence of Corollary \ref{unstablegeneral} and Remark \ref{rem2}.\\

\smallskip

\noindent \textit{(b)}  Let $\lambda$ be a defective unimodular eigenvalue of $z_1A+z_2B$. By Remark~\ref{rem2}, after replacing $A$ and $B$ by
$\lambda^{-1}z_1A$ and $\lambda^{-1}z_2B$, respectively, we may assume that $z_1=z_2=1$ and that $\lambda=1$.

Since $\lambda=1$ is a defective eigenvalue of $A+B$, choose a Jordan chain  $w_0\neq 0,\; w_1,\dots,w_d$, $d \geq 1$ such that
\[ (A+B-I)w_0=0, \qquad (A+B-I)w_r=w_{r-1}, \quad r=1,\dots,d. \]

Choose bounded boundary data by setting
\[ x_{0,0}=w_d \qquad x_{i,0}=x_{0,j}=w_d \qquad (i,j\geq1). \]
Since $x_{0,0}\neq0$, the generating-function identity
\eqref{U and V} takes the form
\[ (I-z_1A-z_2B)X(z_1,z_2) = x_{0,0}+U(z_1)+V(z_2). \]
For the above boundary data,
\[ U(z) = \sum_{i\geq1}(x_{i,0}-Ax_{i-1,0})z^i = \frac{z}{1-z}(I-A)w_d, \]
and similarly
\[ V(z) = \frac{z}{1-z}(I-B)w_d. \]
Hence, setting $M=A+B$,
\[ X(z,z) = (I-zM)^{-1} \left[ w_d+\frac{z}{1-z}(2I-M)w_d \right]. \]

Set
\[ N=M-I. \]
Since $w_0,\ldots,w_d$ is a Jordan chain corresponding to the eigenvalue $1$,
\[ Nw_0=0, \qquad Nw_r=w_{r-1}, \quad r=1,\ldots,d. \]
Moreover,
\[ (2I-M)w_d=(I-N)w_d=w_d-w_{d-1}. \]
Let
\[ q=(2I-M)w_d=w_d-w_{d-1}. \]
Then
\[ N^dq=w_0\neq0, \qquad N^{d+1}q=0. \]

Since
\[ I-zM=(1-z)I-zN, \]
we have, on the generalized eigenspace generated by the Jordan chain,
\[ (I-zM)^{-1}q = \sum_{k=0}^{d} \frac{z^k}{(1-z)^{k+1}}N^kq. \]
It follows from Theorem 2.2.10 in \cite{Ch12} that $(I-zM)^{-1}q$ has a pole of order exactly $d+1$ at $z=1$. On the other hand, $(I-zM)^{-1}w_d$ has a pole of order
at most $d+1$. Therefore
\[ X(z,z) = (I-zM)^{-1}w_d + \frac{z}{1-z}(I-zM)^{-1}q \]
has a pole of order exactly $d+2$ at $z=1$. More precisely,
\[ \lim_{z\to1}(1-z)^{d+2}X(z,z) = w_0\neq0. \]

Suppose, toward a contradiction, that the corresponding solution is bounded. Then there exists $C>0$ such that
\[ \|x_{i,j}\|\leq C \qquad (i,j\geq0). \]
For $0<r<1$, the series \eqref{genfun1} defining $X(r,r)$ is absolutely convergent, and 
\[ \|X(r,r)\| \leq \sum_{i,j\geq0}\|x_{i,j}\|r^{i+j} \leq C\sum_{i,j\geq0}r^{i+j} = \frac{C}{(1-r)^2}. \]
Consequently,
\[ (1-r)^{d+2}\|X(r,r)\| \leq C(1-r)^d \longrightarrow 0 \qquad (r\to1^-),\]
since $d\geq1$. This contradicts
\[ \lim_{r\to1^-}(1-r)^{d+2}X(r,r)=w_0\neq0. \]
Thus the corresponding solution is unbounded.

Finally, by linearity, the boundary data may be multiplied by an arbitrarily small positive constant while the corresponding solution remains unbounded. Hence $\mathcal S(A,B)$ is unstable.
\end{proof}

Having characterized asymptotic stability, we now turn to the borderline case in which the spectral radius reaches the unit circle. Accordingly, suppose that $A$ and $B$ satisfy
\begin{align*}
	\text{\emph{(SC1)}} \quad & \max_{(z_1,z_2)\in \overline{\mathbb{D}}^2} \rho(z_1A+z_2B)=1, \\
	\text{\emph{(SC2)}} \quad & \text{for every }(z_1,z_2)\in\mathbb{T}^2, \text{ each } \lambda\in\sigma(z_1A+z_2B) \cap \mathbb{T}	\text{ is semisimple}.
\end{align*}
Here $\sigma(z_1A+z_2B)$ denotes the spectrum of $z_1A+z_2B$. By \textup{(SC1)}, every matrix $z_1A+z_2B$, $(z_1,z_2)\in\mathbb T^2$, has spectral radius at most one, while \textup{(SC2)} excludes defective eigenvalues on the unit circle. Hence the one-dimensional system
\[ x_{n+1}=(z_1A+z_2B)x_n\]
is Lyapunov stable for every fixed $(z_1,z_2)\in\mathbb T^2$. Moreover, in the borderline regime \textup{(SC1)}, condition \textup{(SC2)} is necessary for Lyapunov stability of $\mathcal S(A,B)$ by Theorem~\ref{unstabilityTH}.

The next two examples show, however, that these one-dimensional spectral conditions are not sufficient for Lyapunov stability of the full two-dimensional system, either in the commutative or in the noncommutative setting. Thus, stability of every fixed system generated by the pencil $z_1A+z_2B$ on $\mathbb T^2$ does not by itself control the propagation of arbitrary bounded boundary data in two dimensions.

\begin{ex}\label{AIB0}
	As a simple example for which \textup{(SC1)} and \textup{(SC2)} hold and the system is neutrally stable, one may take $A=I$ and $B=0$. A noncommutative example is given by
	\[ A= \begin{bmatrix}
		0.5 & 1\\
		0 & 0
	\end{bmatrix},
	\qquad
	B= \begin{bmatrix}
		0.5 & 0\\
		0 & 0
	\end{bmatrix}. \]
	Indeed, $AB\neq BA$, and
	\[ z_1A+z_2B = \begin{bmatrix}
		z_1/2+z_2/2 & z_1\\
		0 & 0
	\end{bmatrix}.\]
	Its eigenvalues are $0$ and $(z_1+z_2)/2$. Hence
	\[ \rho(z_1A+z_2B) = \left|\frac{z_1+z_2}{2}\right| \leq 1 \qquad \text{for all }(z_1,z_2)\in\overline{\mathbb D}^{\,2}, \]
	and equality is attained, for example, at $(z_1,z_2)=(1,1)$.
	Thus \textup{(SC1)} holds. If $(z_1,z_2)\in\mathbb T^2$ and
	\[ \left|\frac{z_1+z_2}{2}\right|=1,\]
	then equality in the triangle inequality implies $z_1=z_2$. In this case the unimodular eigenvalue is distinct from the other eigenvalue $0$, and hence is semisimple. Therefore \textup{(SC2)} also holds.
	
	We now show that $\mathcal S(A,B)$ is neutrally stable. Write
	\[ x_{i,j}= \begin{bmatrix}
		u_{i,j}\\
		v_{i,j}
	\end{bmatrix}. \]
	For $i,j\geq1$, the recurrence gives
	\[ v_{i,j}=0 \]
	and
	\[ u_{i,j} = \frac12u_{i-1,j} + \frac12u_{i,j-1} + v_{i-1,j}. \]
	In particular, for $i\geq2$,
	\[ u_{i,j} = \frac12u_{i-1,j} + \frac12u_{i,j-1}.\]
	It remains only to observe that bounded boundary data for the original system give bounded boundary data for this scalar recurrence. For $i=1$,
	\[ u_{1,j} = \frac12u_{0,j}+v_{0,j} + \frac12u_{1,j-1}. \]
	Since the original boundary data are bounded, there exists $M>0$ such that
	\[ \left|\frac12u_{0,j}+v_{0,j}\right|\leq M \qquad (j\geq1). \]
	Iterating the preceding recurrence gives
	\[ u_{1,j} = \left(\frac12\right)^j u_{1,0} + \sum_{k=1}^j \left(\frac12\right)^{j-k} \left(\frac12u_{0,k}+v_{0,k}\right),\]
	and therefore
	\[ |u_{1,j}| \leq |u_{1,0}|+2M. \]
	Hence $\{u_{1,j}\}_{j\geq0}$ is bounded. On the region $i\geq 2$, the first component satisfies the scalar system $\mathcal S(0.5,0.5)$ with bounded boundary data. By Theorem~\ref{Scalar}, its solutions remain bounded. Hence $\mathcal S(A,B)$ is neutrally stable.
\end{ex}
	
\begin{ex} \label{commutingblowup}
	Consider the commuting matrices
	\[ A  = \begin{bmatrix}
		0.5 & 1 \\
		0 & 0.5
	\end{bmatrix}  \qquad \text{and} \qquad 
	B = \begin{bmatrix}
		0.5 & -1 \\
		0 & 0.5
	\end{bmatrix}. 
	\]
	The only eigenvalue of $z_1A+z_2B$ is 
	\[ \lambda(z_1,z_2) = \frac{z_1+z_2}{2}, \]
	hence 
	\[ \rho(z_1A+z_2B) \leq 1 \qquad \mbox{ for all } (z_1,z_2) \in \overline{\mathbb{D}}^2. \]
	If $|\lambda(z_1,z_2)| = 1$ with $ (z_1,z_2) \in \mathbb{T}^2$, then equality holds in the triangle inequality, and therefore $z_1 = z_2 = e^{i\theta}$ for some $\theta$. In this case 
	\[ z_1 A + z_2 B = e^{i\theta}(A+B) = e^{i\theta}I \]
	so the unimodular eigenvalue is semisimple. Thus conditions $(SC1)$ and $(SC2)$ are both satisfied. Nevertheless, by Theorem~\ref{commutative}(c), ${\cal S}(A,B)$ is unstable since the eigenvalue $0.5$ is not semisimple for either $A$ or $B$.
\end{ex}

In the preceding commutative example, Lyapunov stability fails because of defective behavior of the coefficient matrices. It is therefore natural to ask whether this is the only obstruction. More precisely, suppose that both $A$ and $B$ are diagonalizable and satisfy the conditions (SC1) and (SC2). In the commutative case these conditions, together with the diagonalizability of $A$ and $B$,  are sufficient for Lyapunov stability, by Theorem~\ref{commutative}. The following example shows that this conclusion fails in the noncommutative case.

\begin{ex}  \label{ex non-comm}
	Consider the noncommutative matrices
	\[ A=
	\begin{bmatrix}
		0 & 1\\
		0 & a
	\end{bmatrix} \qquad
	B=
	\begin{bmatrix}
		1 & 0\\
		0 & 0
	\end{bmatrix}, \qquad 0<|a|<1. \]
	Then $A$ and $B$ are diagonalizable.  The eigenvalues of $z_1A+z_2B$ are $z_2$ and $az_1$. Hence
	\[ \rho(z_1A+z_2B) = \max\{|z_2|,|a||z_1|\}\leq 1 \qquad \text{for all }(z_1,z_2)\in\overline{\mathbb D}^{\,2}, \]
	and equality is attained when $|z_2|=1$. Thus \textup{(SC1)} holds. Moreover, since $|a|<1$, the eigenvalue $az_1$ is never unimodular. If $z_2$ is unimodular, then $|az_1|\leq |a|<1$, so $z_2$ is a simple, and hence semisimple, eigenvalue. Therefore \textup{(SC2)} also holds.
	
	We now show that $\mathcal{S}(A,B)$ is nevertheless not Lyapunov stable. Write
	\[ x_{i,j}=
	\begin{bmatrix}
		u_{i,j}\\
		v_{i,j}
	\end{bmatrix}.\]
	The recurrence
	\[ x_{i,j}=Ax_{i-1,j}+Bx_{i,j-1} \]
	is equivalent to
	\[ v_{i,j}=a v_{i-1,j} \qquad u_{i,j}=u_{i,j-1}+v_{i-1,j}. \]
	Thus
	\[ v_{i,j}=a^i v_{0,j}, \]
	and consequently
	\[ u_{i,j} = u_{i,0} + \sum_{k=1}^{j} v_{i-1,k} = u_{i,0} + a^{i-1}\sum_{k=1}^{j} v_{0,k}. \]
	
	Choose bounded boundary data by setting
	\[ x_{i,0}=0 \qquad x_{0,j}= \frac{1}{j}
	\begin{bmatrix}
		0\\
		1
	\end{bmatrix} \quad (j\geq 1). \]
	These boundary data belong to $c_0$. However, for $i=1$ we obtain
	\[ u_{1,j} = \sum_{k=1}^{j} v_{0,k} = \sum_{k=1}^j\frac{1}{k}.\]
	Hence
	\[ \|x_{1,j}\|\longrightarrow \infty \qquad \text{as } j\to\infty. \]
	Therefore, bounded boundary data---or even data in $c_0$---may produce an unbounded solution. By linearity, these boundary data may be multiplied by an arbitrarily
	small positive constant while the corresponding solution remains unbounded. Hence $\mathcal S(A,B)$ is not Lyapunov stable.
\end{ex}

\begin{rem}
	In the last example, if the boundary data are compactly supported, then $v_{i,j}=0$ for all sufficiently large $j$, and the partial sums $\sum_{k=1}^j v_{0,k}$ are bounded. Hence $u_{i,j}\to 0$ as
	$i,j\to\infty$, so $x_{i,j}\rightarrow 0$ as $i,j\rightarrow\infty$.
\end{rem}

Thus, the instability mechanism in Example~\ref{ex non-comm} depends on the boundary-data class: it occurs for bounded data, and even for data in $c_0$, but disappears for compactly supported data. 

More importantly, Example~\ref{ex non-comm} shows that in the noncommutative setting, conditions \textup{(SC1)} and \textup{(SC2)} are not sufficient for Lyapunov stability, even when both coefficient matrices $A$ and $B$ are diagonalizable. The obstruction is not caused
by the unimodular spectrum of the pencil. Indeed, for
\[ (z_1,z_2)=(1,a)\in\partial\overline{\mathbb D}^{\,2}, \]
we have
\[ A+aB= \begin{bmatrix}
	a&1\\
	0&a
\end{bmatrix}, \]
so that the pencil has the defective eigenvalue $a$. Since $|a|<1$, this defect is not detected by \textup{(SC2)}. Thus, even requiring semisimplicity of every unimodular eigenvalue of $z_1A+z_2B$ on the entire boundary $\partial\overline{\mathbb D}^{\,2}$ would not exclude the instability exhibited by this example.

This suggests strengthening the spectral requirement further by asking that the entire matrix $z_1A+z_2B$ be diagonalizable for every
\[ (z_1,z_2)\in\partial\overline{\mathbb D}^{\,2}. \]
The next proposition shows, however, that this condition is much more restrictive than it may first appear: it forces $A$ and $B$ to commute, and therefore reduces the problem to the commutative case.

\begin{Prop} \label{diag-bd-poly}
Let $A,B \in \mathcal{M}_n(\C)$. Suppose that $(SC1)$ holds and  $z_1 A +z_2 B$ is diagonalizable for every $(z_1,z_2) \in \partial \overline{\mathbb{D}}^2$. Then $A$ and $B$ commute. Consequently, ${\cal S}(A,B)$ is neutrally stable. 
\end{Prop}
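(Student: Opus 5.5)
The plan is to reduce the hypothesis to the classical Motzkin--Taussky theorem: \emph{if every member $A+tB$, $t\in\C$, of a matrix pencil (including the member $B$ at $t=\infty$) is diagonalizable, then $AB=BA$.}

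\textbf{Step 1: the hypothesis covers the whole projective pencil.} The boundary $\partial\overline{\mathbb D}^{\,2}$ consists of the points with $|z_1|=1$ or $|z_2|=1$.
\begin{itemize}
\item Taking $z_2=1$ and $z_1=t$ with $|t|\le 1$ shows that $B+tA$ is diagonalizable for all $|t|\le1$. In particular $t=0$ gives that $B$ is diagonalizable.
\item Taking $z_1=1$ and $z_2=s$ with $|s|\le1$ shows that $A+sB$ is diagonalizable, which includes $A$ itself at $s=0$.
\item For $s\neq0$, diagonalizability is invariant under nonzero scaling, so $A+sB$ diagonalizable gives $B+s^{-1}A$ diagonalizable. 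As $s$ ranges over $0<|s|\le 1$, the parameter $t=s^{-1}$ ranges over all $|t|\ge1$.
\end{itemize}
Combining these, $B+tA$ is diagonalizable for every $t\in\C$, and $A$ is diagonalizable as well. Hence the hypothesis of Motzkin--Taussky holds on the entire projective line.

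\textbf{Step 2: apply Motzkin--Taussky.} This gives $AB=BA$. If a self-contained argument is wanted, I would proceed as follows.
\begin{itemize}
\item Away from a finite exceptional set $E$, the matrix $B+tA$ has a constant number of distinct eigenvalues, and the associated Riesz spectral projections $P_k(t)$ are analytic and locally single-valued.
\item Diagonalizability at the exceptional points is used to show that the $P_k(t)$ stay bounded near $E$ and near $t=\infty$. This is where $A$ being diagonalizable enters: it controls the behavior of $t^{-1}(B+tA)=A+t^{-1}B$ as $t\to\infty$. Monodromy around the points of $E$ can only permute the $P_k$, which is handled by passing to symmetric functions of them or by passing to a finite cover.
\item By Riemann's removable singularity theorem and Liouville's theorem, each $P_k$ is then constant.
\item The pencil therefore has a fixed set of spectral projections, and both $A$ and $B$ are combinations of them, so $A$ and $B$ commute.
\end{itemize}
I expect this step, namely proving that the eigenprojections do not blow up at branch points, to be the main obstacle. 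I would simply cite Motzkin--Taussky rather than redo it.

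\textbf{Step 3: neutral stability.} Since $A$ and $B$ are commuting and diagonalizable, they are simultaneously diagonalizable as in \eqref{Order}, with eigenvalue pairs $(a_k,b_k)$. For each $k$,
\[
\max_{(z_1,z_2)\in\overline{\mathbb D}^{\,2}}|z_1a_k+z_2b_k|=|a_k|+|b_k|,
\]
so
\[
\max_{(z_1,z_2)\in\overline{\mathbb D}^{\,2}}\rho(z_1A+z_2B)=\max_k\bigl(|a_k|+|b_k|\bigr).
\]
By (SC1) this maximum equals $1$. Thus $|a_k|+|b_k|\le1$ for all $k$, with equality for some $k$, and Corollary~\ref{diagonal}(b) yields that $\mathcal S(A,B)$ is neutrally stable.
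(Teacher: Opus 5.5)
Your proposal is correct and follows the paper's proof: every nonzero combination $\alpha A+\beta B$ is a nonzero multiple of some $z_1A+z_2B$ with $(z_1,z_2)\in\partial\overline{\mathbb D}^{\,2}$, so Motzkin--Taussky gives $AB=BA$, and Corollary~\ref{diagonal}(b) then yields neutral stability; your explicit check that (SC1) forces $\max_k(|a_k|+|b_k|)=1$ fills in a step the paper leaves implicit. Keep the citation rather than your sketch, because the sketch's second bullet is false as stated: diagonalizability at an exceptional point does not keep the eigenprojections bounded there (for example, for $\operatorname{diag}(0,0,1)+tC$ with $C$ a $3\times3$ cyclic permutation matrix, every member with $|t|<1/2$ is diagonalizable, yet the two small eigenvalues behave like $\pm i\,t^{3/2}$ and their eigenprojections blow up like $|t|^{-1/2}$ as $t\to0$), so the global hypothesis has to enter in an essential way.
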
 

\begin{proof}
For every $(\alpha, \beta) \neq (0,0)$, let $m=\max\{|\alpha|,|\beta|\}$. It follows that 
\[  \alpha A + \beta B = m ( z_1 A +z_2 B) \]
where $(z_1,z_2) \in \partial \overline{\mathbb{D}}^2$. Since $\alpha A + \beta B$ is a scalar multiple of the diagonalizable matrix $z_1 A + z_2 B$, it is also diagonalizable. Moreover, taking $(z_1,z_2)=(1,0)$ and $(0,1)$ shows that $A$ and $B$ are individually diagonalizable. By the Motzkin-Taussky Theorem \cite{MoTa55}, the matrices $A$ and $B$ must commute. Since they commute, they are simultaneously diagonalizable. The conclusion follows from Corollary~\ref{diagonal}(b). 
\end{proof}

Proposition \ref{diag-bd-poly} shows that requiring diagonalizability of the pencil $z_1A+z_2B$ on the entire boundary $ \partial \overline{\mathbb{D}}^2$ is substantially stronger than necessary, as it forces the matrices $A$ and $B$ to commute. On the other hand, Example \ref{ex non-comm} shows that the spectral conditions (SC1) and (SC2) alone are insufficient. Thus, in the noncommutative setting, the problem of finding a necessary and sufficient spectral characterization of Lyapunov stability remains open.

\subsection{Some additional stability results}

We now derive several results that provide simple sufficient conditions for the stability of the system $\mathcal{S}(A,B)$. 

\subsection*{Norm conditions}

The first one gives easily verified sufficient conditions in terms of the induced matrix norm of $A$ and $B$. 

\begin{Prop} \label{General}
Let $A, B \in \mathcal{M}_n(\C)$. ${\cal S}(A,B)$ is asymptotically stable if $\|A\| + \|B\|<1$ for some induced matrix norm. It is Lyapunov stable if $\|A\| + \|B\| \leq 1$. 
\end{Prop}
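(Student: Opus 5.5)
The plan is to work directly from the representation formula \eqref{BigFormEqn} of Proposition~\ref{BigForm}, bounding the word sums $W_{p,q}(A,B)$ in the chosen induced norm. Since the norm is submultiplicative, every word with $p$ copies of $A$ and $q$ copies of $B$ has norm at most $\|A\|^p\|B\|^q$. There are $\binom{p+q}{p}$ such words, so
\[ \|W_{p,q}(A,B)\| \le \binom{p+q}{p}\, a^p b^q, \qquad a:=\|A\|,\ b:=\|B\|. \]
This reduces the matrix problem to the scalar estimates already used in the proof of Theorem~\ref{Scalar}. We use the vector norm that induces the given matrix norm.

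For the asymptotic case $a+b<1$, assume the boundary data are bounded by $\delta$. Substituting the bound into \eqref{BigFormEqn} gives
\[ \|x_{i,j}\| \le \delta\, a^{i}\sum_{k\ge 0}\binom{i-1+k}{k} b^k + \delta\, b^{j}\sum_{k\ge0}\binom{j-1+k}{k}a^k. \]
The negative binomial series evaluates the right-hand side exactly as in \eqref{Scalar eq1}:
\[ \delta\left[\Bigl(\tfrac{a}{1-b}\Bigr)^i+\Bigl(\tfrac{b}{1-a}\Bigr)^j\right]. \]
Here $b<1$ and $a<1$, and $a+b<1$ makes both ratios lie in $[0,1)$. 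The bound is therefore at most $2\delta$, which gives Lyapunov stability with $\delta=\varepsilon/2$, and it tends to $0$ as $i,j\to\infty$, which gives attractivity. Alternatively, attractivity alone suffices by Theorem~\ref{theo:att Lyap}.

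For the Lyapunov case $a+b\le 1$, the same estimate applies, but the boundary cases need care.
\begin{itemize}
\item If $a,b<1$, then $a/(1-b)\le 1$ and $b/(1-a)\le 1$, so $\|x_{i,j}\|\le 2\delta$.
\item If $b=1$, then $a=0$, so $A=0$. The recurrence reduces to $x_{i,j}=Bx_{i,j-1}$ with $\|B\|=1$, hence $\|x_{i,j}\|\le\|x_{i,0}\|\le\delta$. The case $a=1$, $b=0$ is symmetric.
\end{itemize}
In every case the interior is bounded by $2\delta$, which gives Lyapunov stability.

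A more robust route avoids splitting into cases: induction directly on the recurrence. If all $\|x_{i-1,j}\|,\|x_{i,j-1}\|\le\delta$, then $\|x_{i,j}\|\le (a+b)\delta\le\delta$, and induction over $i+j$ propagates the bound from the axes. I would present this induction as the main argument for Lyapunov stability, since it handles all cases at once. For attractivity I would keep the binomial estimate, or argue directly by setting $m_N=\sup_{i,j\ge N}\|x_{i,j}\|$; the contraction then yields decay. The only mild obstacle is the degenerate cases where the negative binomial series diverges, and the induction argument sidesteps them.
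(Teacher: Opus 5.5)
Your proposal is correct. For the Lyapunov part with $\|A\|+\|B\|\le 1$, your first argument matches the paper's. Both insert the bound $\|W_{p,q}(A,B)\|\le\binom{p+q}{p}\|A\|^p\|B\|^q$ into \eqref{BigFormEqn}, apply the negative binomial series as in \eqref{Scalar eq1}, and split off the degenerate cases $(\|A\|,\|B\|)=(1,0)$ or $(0,1)$ in the same way.

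Where you differ is the asymptotic part. The paper obtains it in one line from Theorem~\ref{bigTH}, since $\rho(z_1A+z_2B)\le\|z_1A+z_2B\|\le\|A\|+\|B\|<1$ on $\overline{\mathbb D}^{\,2}$. You instead read attractivity and Lyapunov stability directly off the explicit bound $\delta\bigl[(a/(1-b))^i+(b/(1-a))^j\bigr]$. You therefore never need the analytic expansion of $(I-z_1A-z_2B)^{-1}$ or Theorem~\ref{theo:att Lyap}.

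Your preferred induction on $i+j$ is more elementary than either route. It uses only the recurrence, not Proposition~\ref{BigForm}, and needs no case distinction. It also gives the sharper bound $\|x_{i,j}\|\le\max\{\sup_r\|x_{r,0}\|,\sup_s\|x_{0,s}\|\}$.

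The contraction you only sketch also goes through. For $i,j\ge N+1$, both predecessors $(i-1,j)$ and $(i,j-1)$ lie in $\{i,j\ge N\}$, so $m_{N+1}\le(\|A\|+\|B\|)\,m_N$. Hence $m_N\le(\|A\|+\|B\|)^N m_0\to 0$, with $m_0<\infty$ by the induction.

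The trade-off is this. The paper's route is shortest once Theorem~\ref{bigTH} is available, and it shows how norm conditions are subsumed by the spectral criterion. Yours is self-contained and gives explicit decay rates. The binomial rates $a/(1-b)$ and $b/(1-a)$ are slightly better than the rate $a+b$ from the contraction, since $a\le(a+b)(1-b)$ whenever $a+b\le 1$.
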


\begin{proof}
The asymptotic stability is an immediate consequence of the Fornasini--Marchesini Theorem since 
\[  \rho(z_1 A + z_2 B) \leq \| z_1 A + z_2 B \| \leq \|A\| +\|B \| <1\]
for every $(z_1, z_2) \in \overline{\mathbb{D}}^2$. 

If $\|A\|=1$ and $\|B\|=0$, or if $\|A\|=0$ and $\|B\|=1$, Lyapunov stability follows immediately from the one-dimensional case. We may therefore assume that $\|A\|<1$ and $\|B\|<1$. The triangle inequality applied to \eqref{BigFormEqn}   
\begin{align*}
 \| x_{i,j}\| & \leq  \sum_{k=0}^{j-1}  \binom{i-1+k}{k}  \|A\|^i \|B\|^k \| x_{0,j-k} \| \\   
   & + \sum_{k=0}^{i-1} \binom{j-1+k}{k} \|A\|^k \|B\|^j \|x_{i-k,0}\|.
\end{align*}
It then follows from \eqref{Scalar eq1} that 
\[  \| x_{i,j} \| = \left[ \Oo \left( \left( \frac{\|A\|}{1-\|B\|} \right)^i \right) + \Oo \left( \left( \frac{\|B\|}{1-\|A\|} \right)^j \right) \right] \delta\]
whenever $\|x_{i,0}\|,  \|x_{0,j}\| <\delta$. This bound, together with the scalar case treated in Theorem~\ref{Scalar}, immediately implies that the fixed point $x=0$ of ${\cal S}(A,B)$ is attractive
if $\|A\|+\|B\|<1$, and Lyapunov stable if $\|A\|+\|B\|\leq 1$.
\end{proof}

\medskip

\begin{ex}
No conclusion stronger than Lyapunov stability can be drawn if $\|A \|+\|B\| = 1$. Indeed, Example \ref{ex2} shows that $\mathcal{S}(A,B)$ can either be asymptotically stable or neutrally stable in that situation. Similarly, Example~\ref{ex3} shows that no general conclusion can be drawn when \(\|A\|+\|B\|>1\), since \(\mathcal S(A,B)\) may be asymptotically stable, neutrally stable, or unstable in that case.
\end{ex}

\begin{ex}
In the special case where both $A$ and $B$ are normal matrices, the matrix norm in Proposition \ref{General} may be replaced by the spectral radius. Example \ref{ex1} demonstrates that this substitution is not valid in general when either $A$ or $B$ is not normal. 
\end{ex}

\subsection*{Spectral radius conditions}

In order to obtain some sufficient conditions in terms of the spectral radius of $A+B$, it is useful to introduce the following notation. For every matrix $A= [ a_{ij}] \in \mathcal{M}_n(\C)$, we denote by $|A|$ the matrix with entries $|a_{ij}|$. By Theorem 8.1.18 in \cite{HoJo85}, we have for every $|z_1| \leq 1$, $|z_2| \leq 1$ that 
\[  \rho(z_1 A + z_2 B) \leq \rho(|z_1 A+z_2 B|) \leq \rho(|A|+|B|).\]
The next corollary is  an immediate consequence of Theorem~\ref{bigTH}.

\begin{cor} 
Let $A,B \in \mathcal{M}_n(\C)$. Then ${\mathcal S}(A,B)$ is asymptotically stable if $\rho(|A|+|B|)<1$. 
\end{cor}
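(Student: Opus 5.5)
The plan is to verify condition (c) of Theorem~\ref{bigTH} directly and then invoke that theorem. Fix $(z_1,z_2)\in\overline{\mathbb D}^{\,2}$ and set $M=z_1A+z_2B$.

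The first step is an entrywise comparison. By the triangle inequality, every entry of $M$ satisfies
\[ |z_1a_{ij}+z_2b_{ij}|\le |z_1||a_{ij}|+|z_2||b_{ij}|\le |a_{ij}|+|b_{ij}|. \]
Hence $|M|\le |A|+|B|$ entrywise, and both matrices are nonnegative.

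The second step combines two standard facts from \cite{HoJo85}, namely Theorem 8.1.18, which the paper has just recalled. First, $\rho(M)\le\rho(|M|)$. Second, the spectral radius is monotone on nonnegative matrices, so $0\le C\le D$ entrywise implies $\rho(C)\le\rho(D)$. Together these give
\[ \rho(z_1A+z_2B)\le\rho(|A|+|B|)<1 . \]

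The final step is to note that $(z_1,z_2)\in\overline{\mathbb D}^{\,2}$ was arbitrary. So condition (c) of Theorem~\ref{bigTH} holds, and by the implication (c)$\Rightarrow$(a) the system $\mathcal S(A,B)$ is asymptotically stable.

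There is no real obstacle in this argument. The only point to watch is that the monotonicity of $\rho$ is applied only to nonnegative matrices, namely $|M|$ and $|A|+|B|$, which is exactly the setting where it is valid.
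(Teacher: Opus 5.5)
Your proof is correct and matches the paper's argument: the entrywise bound $|z_1A+z_2B|\le |A|+|B|$ and Theorem 8.1.18 of \cite{HoJo85} give $\rho(z_1A+z_2B)\le\rho(|A|+|B|)<1$ for every $(z_1,z_2)\in\overline{\mathbb D}^{\,2}$. The implication (c)$\Rightarrow$(a) of Theorem~\ref{bigTH} then gives asymptotic stability.
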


\begin{ex}
The condition $\rho(|A|+|B|)<1$ is sufficient, but not necessary. Indeed, let
\[  A = \begin{bmatrix}
  0.6 & 0 \\
  0 & -0.6
\end{bmatrix} \qquad \text{and} \qquad
B = \begin{bmatrix}
  0 & 0.5 \\
  0.5 & 0
\end{bmatrix}\]
The system is asymptotically stable since
\[  \rho(z_1 A + z_2 B) = \sqrt{ |(0.6)^2 z_1^2 + (0.5)^2 z_2^2 |} \leq \sqrt{0.36+0.25}<1,\]
for every $(z_1,z_2) \in \overline{\mathbb{D}}^2$, but
\[  \rho(|A|+|B|) = 1.1>1.\] 
In general, no conclusion can be made when $\rho(|A|+|B|) \geq 1$. 
\end{ex}

\subsection*{Numerical radius conditions}

For any given matrix $A \in \mathcal{M}_n(\C)$, the numerical radius is defined as the function
\[  r(A) = \max_{ x^*x \leq 1} | x^* A x|.\]
For every matrix, it is easy to see that 
\[  \rho(A) \leq r(A) \leq \|A\|_2 \leq 2 r(A).\]
 
\begin{cor}
Let $A,B \in \mathcal{M}_n(\C)$. If $r(A)+r(B)<1$, then $\mathcal{S}(A,B)$ is asymptotically stable. 
\end{cor}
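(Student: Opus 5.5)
The plan is to reduce the statement to the spectral criterion of Theorem~\ref{bigTH}, exactly as was done for the norm condition in Proposition~\ref{General}. By the equivalence (a)$\Leftrightarrow$(c) of that theorem, it suffices to show that
\[ \rho(z_1A+z_2B)<1 \qquad \text{for all } (z_1,z_2)\in\overline{\mathbb D}^{\,2}. \]
This will follow from the chain of inequalities $\rho(M)\le r(M)$, valid for every square matrix $M$, together with subadditivity and absolute homogeneity of the numerical radius.

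First I will record that the numerical radius is a seminorm (in fact a norm) on $\mathcal M_n(\C)$. Subadditivity comes from the triangle inequality applied pointwise: for each unit vector $x$,
\[ |x^*(M_1+M_2)x|\le |x^*M_1x|+|x^*M_2x|. \]
Taking the maximum over $x$ gives $r(M_1+M_2)\le r(M_1)+r(M_2)$. Homogeneity is immediate: $r(zM)=|z|\,r(M)$.

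Hence, for $|z_1|,|z_2|\le1$,
\[ \rho(z_1A+z_2B)\le r(z_1A+z_2B)\le |z_1|\,r(A)+|z_2|\,r(B)\le r(A)+r(B)<1. \]
Here the first inequality is the stated fact $\rho\le r$. It holds because, for a unit eigenvector $x$ with eigenvalue $\lambda$, we have $x^*Mx=\lambda$, so $|\lambda|\le r(M)$. Condition (c) of Theorem~\ref{bigTH} then holds, and $\mathcal S(A,B)$ is asymptotically stable.

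There is no real obstacle here. The only point needing care is to justify subadditivity and homogeneity of $r$, since the paper states only the comparison $\rho\le r\le\|\cdot\|_2$ and not that $r$ is a norm. Both properties are one-line consequences of the definition.
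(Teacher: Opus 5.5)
Your proposal is correct and follows the paper's proof exactly: both bound $\rho(z_1A+z_2B)\le r(z_1A+z_2B)\le |z_1|\,r(A)+|z_2|\,r(B)<1$ on the closed bidisk and then invoke Theorem~\ref{bigTH}. Your extra one-line justifications of $\rho\le r$ and of subadditivity and homogeneity of $r$ are the facts the paper uses implicitly.
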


\begin{proof}
For every $|z_1|, |z_2| \leq 1$, we have 
\begin{align*}
  \rho(z_1 A + z_2 B) & \leq r(z_1 A+ z_2 B) \leq |z_1| r(A) + |z_2| r(B) < 1.
\end{align*}
Hence, ${\cal S}(A,B)$ is asymptotically stable by Theorem \ref{bigTH}.  
\end{proof}

\begin{rem}
By examples \ref{ex2} and \ref{ex3}, no conclusion regarding the stability of $\mathcal{S}(A,B)$ can be made when $r(A)+r(B) \geq 1$. 
\end{rem}

\subsection*{Joint spectral radius conditions}

We conclude this section by presenting a new sufficient condition for the stability of the fixed point $x=0$ in terms of the \emph{joint spectral radius} \cite{Ju09}, a concept introduced by Rota and Strang \cite{RoSt60}. Given matrices $A$ and $B$, the joint spectral radius $\rho(A,B)$ is defined by 
\[  \rho(A,B) = \lim_{k \to \infty} \max\{ \| M_1 M_2 \cdots M_k\|^{1/k} : M_i=A \text{ or } B \}.\]
This limit is known to exist and to be independent of the matrix norm chosen. 

\begin{Prop}
Let $A,B \in \mathcal{M}_n(\C)$. Then ${\cal S}(A,B)$ is  asymptotically stable if $\rho(A,B)<1/2$.
\end{Prop}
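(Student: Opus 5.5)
The plan is to reduce everything to the spectral criterion of Theorem~\ref{bigTH}, using \((a)\Leftrightarrow(c)\). Two routes are available, and I would present the first and keep the second as a backup.

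\emph{First route: direct spectral bound.} Fix \((z_1,z_2)\in\overline{\mathbb D}^{\,2}\) and set \(M=z_1A+z_2B\). Expanding \(M^k\) gives a sum of \(2^k\) words in \(A\) and \(B\), each multiplied by a scalar of modulus at most \(1\). Hence
\[ \|M^k\|\le 2^k \max\{\|M_1\cdots M_k\| : M_i\in\{A,B\}\}. \]
Take \(k\)-th roots and let \(k\to\infty\). Gelfand's formula on the left and the definition of \(\rho(A,B)\) on the right then give
\[ \rho(z_1A+z_2B)\le 2\rho(A,B)<1. \]
This holds for every point of the closed bidisk, so condition \((c)\) of Theorem~\ref{bigTH} is satisfied. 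Asymptotic stability of \(\mathcal S(A,B)\) follows.

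\emph{Second route: direct estimate.} If one prefers an argument that avoids Theorem~\ref{bigTH}, I would estimate the representation formula \eqref{BigFormEqn} directly. Choose \(\gamma\) with \(\rho(A,B)<\gamma<1/2\). By the definition of the joint spectral radius there is a constant \(C\) such that every word of length \(m\) has norm at most \(C\gamma^m\). The word sum \(W_{p,q}(A,B)\) contains \(\binom{p+q}{p}\) words, so
\[ \|W_{p,q}(A,B)\|\le C\binom{p+q}{p}\gamma^{p+q}. \]
Substituting this into \eqref{BigFormEqn} reduces the estimate to the scalar case with \(a=b=\gamma\). Using the negative binomial series exactly as in \eqref{Scalar eq1}, one obtains
\[ \|x_{i,j}\|=\mathcal O\!\left(\left(\tfrac{\gamma}{1-\gamma}\right)^i\right)+\mathcal O\!\left(\left(\tfrac{\gamma}{1-\gamma}\right)^j\right). \]
Since \(\gamma<1/2\), we have \(\gamma/(1-\gamma)<1\), so the solution tends to zero. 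Attractivity follows, and Theorem~\ref{theo:att Lyap} upgrades it to asymptotic stability.

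\emph{Expected difficulty.} The only delicate step is justifying the uniform bound \(\|M_1\cdots M_k\|\le C\gamma^k\) for all \(k\) from the limit defining \(\rho(A,B)\). This is standard: by submultiplicativity the limit is an infimum, so some finite \(k_0\) gives maximal product norm at most \(\gamma^{k_0}\). Splitting any product into blocks of length \(k_0\) plus a bounded remainder then yields the constant \(C\). With that in hand, both routes are routine.
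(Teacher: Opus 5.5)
Your proposal is correct. Your second route is essentially the paper's own proof. The paper bounds $\|W_{i,j}(A,B)\|\le C_\varepsilon\binom{i+j}{i}r^{i+j}$ with $r=\rho(A,B)+\varepsilon<1/2$, inserts this into \eqref{BigFormEqn}, and sums with the negative binomial series to get decay of order $(r/(1-r))^{i-1}+(r/(1-r))^{j-1}$.

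Your first route is genuinely different. You obtain $\rho(z_1A+z_2B)\le 2\rho(A,B)$ on $\overline{\mathbb D}^{\,2}$ from the $2^k$-word expansion of $(z_1A+z_2B)^k$ and Gelfand's formula. This places the hypothesis inside condition (c) of Theorem~\ref{bigTH}, so the proposition becomes a corollary of the main spectral criterion. That is how the paper itself treats the norm, numerical-radius, $\rho(|A|+|B|)$ and Lyapunov conditions.

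This route buys you several things. It explains the threshold $1/2$: the factor $2$ is just the number of letters in $\{A,B\}$. It gives a quantitative margin, $\max_{\overline{\mathbb D}^{\,2}}\rho(z_1A+z_2B)\le 2\rho(A,B)$. It also uses only the limit defining $\rho(A,B)$, so the uniform word bound you flagged as the delicate step is not needed there.

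The cost is dependence on the chain $(c)\Rightarrow(d)\Rightarrow(a)$ of Theorem~\ref{bigTH}. Its proof uses the power-series expansion of $(I-z_1A-z_2B)^{-1}$ on a bidisk of radius $R>1$, followed by Theorem~\ref{theo:att Lyap}. The paper's direct estimate needs only Proposition~\ref{BigForm} and gives explicit rates and constants. Because that bound is at most a fixed multiple of $\delta$ for all $i,j\ge 1$, it yields Lyapunov stability directly, without appeal to Theorem~\ref{theo:att Lyap}.
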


\begin{proof} Assume $\rho(A,B) = \rho <1/2$.  Choose $\varepsilon>0$ small enough so that $r:=\rho + \varepsilon < 1/2$.  Then, for all $i,j \in \N$, it follows from the definition of the joint spectral radius that 
\[  \| W_{i,j}(A,B) \| \leq \binom{i+j}{i} C_\varepsilon r^{i+j} \]
for some positive constant $C_\varepsilon$. Applying Proposition \ref{BigForm} and the bound in \eqref{Scalar eq1}, we obtain
\begin{align*}
  \| x_{i,j} \| &  \leq \sum_{k=0}^{j-1}\| W_{i-1,k}(A,B) \| \|A\| \|x_{0,j-k}\| + 
     \sum_{k=0}^{i-1}  \| W_{k,j-1}(A,B) \| \|B\| \|x_{i-k,0}\| \\
     & \leq  \left( \sum_{k=0}^{j-1}  \binom{i+k-1}{k}  r^{i+k-1} + \sum_{k=0}^{i-1} \binom{j+k-1}{k}  r^{j+k-1}  \right) C'_\varepsilon \delta\\
     &\leq \frac{C'_\varepsilon\delta}{1-r} \left( \left[\frac{r}{1-r}\right]^{i-1} + \left[\frac{r}{1-r}\right]^{j-1}\right).
\end{align*}     
where $C'_\varepsilon=C_\varepsilon\max\{\|A\|,\|B\|\}$.  Since $r<1/2$, the quantity $r/(1-r)<1$, and hence $\|x_{i,j}\| \to 0$ as $i,j \to \infty$. 
\end{proof}

\begin{ex}
No general stability conclusion can be drawn from the condition $\rho(A,B)=1/2$ alone. Indeed, if $A$ and $B$ commute, then their joint spectral radius satisfies
\[ \rho(A,B)=\max\{\rho(A),\rho(B)\}. \]
For the matrices in Example~\ref{ex2}, we have $\rho(A,B)=1/2$ in all three cases. Nevertheless, the corresponding fixed point $x=0$ is, respectively, asymptotically stable, neutrally stable, and unstable. Thus the borderline condition $\rho(A,B)=1/2$ does not determine the stability type of ${\cal S}(A,B)$.	
\end{ex}

\subsection*{A Lyapunov-type sufficient condition}

We next record a simple sufficient condition based on a quadratic Lyapunov function. This condition is not intended to be sharp; rather, it provides a direct Lyapunov-type criterion adapted to the local structure of the Fornasini--Marchesini recurrence
\[ x_{i,j}=Ax_{i-1,j}+Bx_{i,j-1}. \]
For every \(x\in\C^n\), let
\[ V(x)=x^*Px, \]
where \(P\) is a positive definite matrix. The natural decrement associated with the above recurrence compares the value of \(V\) at the node \((i,j)\) with the two incoming values at \((i-1,j)\) and \((i,j-1)\). Thus we consider
\begin{equation} \label{Lyapunov function}
V(x_{i,j})-V(x_{i-1,j})-V(x_{i,j-1}). 
\end{equation}
This choice is local for the two-dimensional dynamics. By contrast, a diagonal decrement such as \(V(x_{i,j})-V(x_{i-1,j-1})\) compares \(x_{i,j}\) with a state that does not determine \(x_{i,j}\) directly through the recurrence.

Assume that there exists \(0<\alpha<1/2\) such that
\begin{equation} \label{LFC}
	A^*PA+B^*PB < \alpha P.
\end{equation}
Then, for all \(u,v\in\C^n\), using the \(P\)-norm
\[ \|x\|_P^2:=x^*Px, \]
we have
\[ V(Au+Bv)=\|Au+Bv\|_P^2 \leq 2\bigl(\|Au\|_P^2+\|Bv\|_P^2\bigr). \]
Moreover, \eqref{LFC} implies
\[ \|Au\|_P^2 = u^*A^*PAu \leq \alpha u^*Pu=\alpha V(u), \]
and
\[ \|Bv\|_P^2 = v^*B^*PBv \leq \alpha v^*Pv=\alpha V(v). \]
Therefore
\[ V(Au+Bv)\leq 2\alpha\bigl(V(u)+V(v)\bigr). \]
Since \(2\alpha<1\), applying this inequality with \(u=x_{i-1,j}\) and \(v=x_{i,j-1}\) gives
\begin{equation} \label{LFD}
	V(x_{i,j})-V(x_{i-1,j})-V(x_{i,j-1})\leq 0.
\end{equation}
Thus \(V\) provides a Lyapunov-type sufficient condition for stability in the present two-dimensional setting. The condition \eqref{LFC} is conservative, but it has the advantage of being easy to verify and of leading directly to the spectral condition used in Theorem~\ref{bigTH}.

\begin{cor}
	Suppose there exists a positive definite matrix \(P\) such that \eqref{LFC} holds. Then, \(\mathcal{S}(A,B)\) is asymptotically stable.
\end{cor}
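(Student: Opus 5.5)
The plan is to verify condition (c) of Theorem~\ref{bigTH}, namely $\rho(z_1A+z_2B)<1$ for all $(z_1,z_2)\in\overline{\mathbb D}^{\,2}$. The Fornasini--Marchesini theorem then gives asymptotic stability at once. The local inequality \eqref{LFD} on its own only shows that $V$ cannot grow faster than the sum of the two incoming values. That does not directly give uniform boundedness over the lattice, because the number of paths grows binomially. So I would not try to iterate \eqref{LFD}. I would instead turn \eqref{LFC} into a spectral bound for the pencil.

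Fix $(z_1,z_2)\in\overline{\mathbb D}^{\,2}$, and let $\lambda$ be an eigenvalue of $M=z_1A+z_2B$ with eigenvector $x\neq 0$. Work in the $P$-norm $\|\cdot\|_P$, which comes from the inner product $\langle u,w\rangle_P=w^*Pu$. Since $Mx=\lambda x$,
\[ |\lambda|^2\|x\|_P^2=\|z_1Ax+z_2Bx\|_P^2\le \bigl(\|Ax\|_P+\|Bx\|_P\bigr)^2\le 2\bigl(\|Ax\|_P^2+\|Bx\|_P^2\bigr). \]
By \eqref{LFC}, the right-hand side equals $2\,x^*(A^*PA+B^*PB)x$, and this is strictly less than $2\alpha\,x^*Px$ because $x\neq0$ and the matrix inequality is strict. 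Hence $|\lambda|^2<2\alpha<1$. Equivalently, one can apply the bound $V(Au+Bv)\le 2\alpha(V(u)+V(v))$ from the preceding discussion with $u=z_1x$ and $v=z_2x$, using $|z_k|\le1$.

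The eigenvalue and the point $(z_1,z_2)$ were arbitrary, so $\rho(z_1A+z_2B)\le\sqrt{2\alpha}<1$ on the whole closed bidisk. Theorem~\ref{bigTH}, (c)$\Rightarrow$(a), then completes the proof.

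There is no real obstacle here. The one point needing care is to use the strict inequality in \eqref{LFC} on a nonzero vector, and to note that the factor $2$ from $(a+b)^2\le 2(a^2+b^2)$ is exactly compensated by the hypothesis $\alpha<1/2$. The bound holds uniformly in $(z_1,z_2)$, so no compactness argument is needed.
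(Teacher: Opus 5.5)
Your proof is correct and follows essentially the same route as the paper: both combine $(a+b)^2\le 2(a^2+b^2)$ with \eqref{LFC} to get $\|(z_1A+z_2B)x\|_P^2\le 2\alpha\|x\|_P^2$ on the closed bidisk, and then invoke Theorem~\ref{bigTH}. The only cosmetic difference is that the paper bounds the induced norm, $\|z_1A+z_2B\|_P\le\sqrt{2\alpha}<1$, for all $x$, whereas you apply the same estimate directly to an eigenvector.
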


\begin{proof}
	Fix \(|z_1|,|z_2|\leq 1\) and set
	\[ M=z_1A+z_2B. \]
	For every \(x\in\C^n\), we have
	\begin{align*}
		\|Mx\|_P^2
		&= \|z_1Ax+z_2Bx\|_P^2  \\
		&\leq 2\bigl(\|z_1Ax\|_P^2+\|z_2Bx\|_P^2\bigr) \\
		&\leq 2\bigl(\|Ax\|_P^2+\|Bx\|_P^2\bigr) \\
		&= 2x^*(A^*PA+B^*PB)x \\
		&\leq 2\alpha x^*Px \\
		&= 2\alpha \|x\|_P^2.
	\end{align*}
	Hence the induced \(P\)-norm satisfies
	\[ \|M\|_P\leq \sqrt{2\alpha}<1. \]
	Consequently,
	\[ \rho(z_1A+z_2B)=\rho(M)<1 \]
	for every \(|z_1|,|z_2|\leq 1\). In particular, this holds for every \((z_1,z_2)\in\mathbb{T}^2\). By Theorem~\ref{bigTH}, \(\mathcal{S}(A,B)\) is asymptotically stable.
\end{proof}

\section{Some BIBO results}

In this section, we obtain several results about BIBO \cite{Zh17} (bounded input, bounded output) stability of the system $\mathcal{S}(A,B)$ when the boundary conditions are assumed to be in $\ell^1$. We start with a simple consequence of Proposition \ref{General}.  

\begin{Prop}
Suppose $\|A\| + \|B \| <1$. If the boundary conditions $\{ x_{i,0}\}$, $\{ x_{0,j}\}$ are in  $\ell^1(\N)$, then $\{ x_{i,j}\} \in \ell^1(\N^2)$. 
\end{Prop}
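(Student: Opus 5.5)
The plan is to bound $\sum_{i,j\ge1}\|x_{i,j}\|$ directly from the representation formula \eqref{BigFormEqn} in Proposition~\ref{BigForm}, after replacing the word sums by scalar majorants. Set $a=\|A\|$ and $b=\|B\|$, so $a+b<1$. Every word with $p$ copies of $A$ and $q$ copies of $B$ has norm at most $a^pb^q$, and there are $\binom{p+q}{p}$ such words. Hence
\[ \|W_{p,q}(A,B)\|\le \binom{p+q}{p}a^pb^q. \]
The triangle inequality then gives
\[ \|x_{i,j}\|\le \sum_{k=0}^{j-1}\binom{i-1+k}{k}a^{i}b^k\|x_{0,j-k}\| + \sum_{k=0}^{i-1}\binom{j-1+k}{k}a^kb^{j}\|x_{i-k,0}\|. \]
The two terms are symmetric, so it suffices to show that the first, summed over all $i,j\ge1$, is at most a constant times $\sum_{s\ge1}\|x_{0,s}\|$.

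For the first term, substitute $s=j-k\ge1$, so $j=s+k$, and interchange the order of summation; all terms are nonnegative, so Tonelli applies. This gives
\[ \sum_{s\ge1}\|x_{0,s}\|\sum_{i\ge1}\sum_{k\ge0}\binom{i-1+k}{k}a^ib^k. \]
For fixed $i$, the negative binomial series used in \eqref{Scalar eq1} gives $\sum_{k\ge0}\binom{i-1+k}{k}b^k=(1-b)^{-i}$. The inner double sum is therefore
\[ \sum_{i\ge1}\Bigl(\frac{a}{1-b}\Bigr)^i=\frac{a}{1-a-b}, \]
which is finite precisely because $a+b<1$. By symmetry, the second term summed over $i,j\ge1$ is at most $\frac{b}{1-a-b}\sum_{r\ge1}\|x_{r,0}\|$.

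Adding the boundary values themselves, which lie in $\ell^1$ by hypothesis, yields
\[ \sum_{i,j\ge0}\|x_{i,j}\|\le \Bigl(1+\frac{\max\{a,b\}}{1-a-b}\Bigr)\bigl(\|\{x_{i,0}\}\|_{\ell^1}+\|\{x_{0,j}\}\|_{\ell^1}\bigr)<\infty, \]
which is a BIBO-type estimate with an explicit constant. There is no real obstacle here. The only points that need care are the change of variables $s=j-k$, ensuring each pair $(i,j)$ is counted exactly once, and justifying the rearrangement of the series by nonnegativity. In the degenerate cases $a=0$ or $b=0$ the same computation goes through, with the convention $0^0=1$.
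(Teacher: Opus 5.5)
Your proof is correct. It reaches exactly the paper's constant,
\[ \sum_{i,j\ge1}\|x_{i,j}\|\le \frac{\|A\|\,\|\{x_{0,j}\}\|_1+\|B\|\,\|\{x_{i,0}\}\|_1}{1-\|A\|-\|B\|}, \]
but by a different route.

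The paper never uses the representation formula. It sums the recurrence itself over a finite rectangle $1\le i\le n$, $1\le j\le m$ and applies the triangle inequality. It then shifts indices so that the right-hand side is $(\|A\|+\|B\|)$ times the same rectangle sum plus boundary sums. Since the rectangle sum is finite, it can be absorbed, and letting $n,m\to\infty$ finishes. That argument is shorter and uses only the local recurrence. It therefore carries over verbatim to models with no closed-form solution, such as the diagonal-term system \eqref{ABC} under $\|A\|+\|B\|+\|C\|<1$.

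Your route uses more machinery (Proposition~\ref{BigForm}, word counting, the negative binomial series, Tonelli), but it gains something. Before you majorize $\|W_{p,q}(A,B)\|$ by $\binom{p+q}{p}\|A\|^p\|B\|^q$, your change of variables $s=j-k$ and Tonelli step already give
\[ \sum_{i,j\ge1}\|x_{i,j}\|\le\bigl(\|A\|\,\|\{x_{0,j}\}\|_1+\|B\|\,\|\{x_{i,0}\}\|_1\bigr)\sum_{p,q\ge0}\|W_{p,q}(A,B)\|. \]
By \eqref{MR} in the proof of Theorem~\ref{bigTH}, the last series converges whenever $\mathcal{S}(A,B)$ is asymptotically stable. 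So your method yields the $\ell^1$ conclusion under the spectral condition of Theorem~\ref{bigTH}. That condition is strictly weaker than $\|A\|+\|B\|<1$ for some induced norm. For example, the asymptotically stable pair in Example~\ref{ex3} has $\rho(A)+\rho(B)>1$, so no induced norm satisfies $\|A\|+\|B\|<1$ there. The paper's absorption step, by contrast, genuinely needs $\|A\|+\|B\|<1$.
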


\begin{proof}
We have
\begin{align*}
  \sum_{i=1}^n \sum_{j=1}^m \| x_{i,j} \| & = \sum_{i=1}^n \sum_{j=1}^m \| A x_{i-1,j} + B x_{i,j-1} \| \\
    & \leq \|A\| \, \sum_{i=1}^n \sum_{j=1}^m \| x_{i-1,j} \| + \|B\| \, \sum_{i=1}^n \sum_{j=1}^m \|x_{i,j-1}\| \\
    & = \|A\| \, \sum_{i=0}^{n-1} \sum_{j=1}^m \|x_{i,j}\| + \|B \| \, \sum_{i=1}^n \sum_{j=0}^{m-1} \|x_{i,j} \| \\
    & \leq \|A\| \, \sum_{i=1}^{n} \sum_{j=1}^m \|x_{i,j}\| + \|B \| \, \sum_{i=1}^n \sum_{j=1}^{m} \|x_{i,j} \|  \\
    & \quad + \| A\| \sum_{j=1}^m \|x_{0,j}\| + \|B\| \, \sum_{i=1}^n \|x_{i,0} \| 
\end{align*}
Consequently,
\[  \sum_{i=1}^n \sum_{j=1}^m \| x_{i,j} \| \leq \frac{\|A\| \, \| \{x_{0,j}\} \|_1 + \|B \| \, \| \{x_{i,0}\} \|_1}{1-\|A\|-\|B\|} \]
which implies that $\{ x_{i,j} \} \in \ell^1(\N^2)$ as desired.
\end{proof}

Under the assumption that 
\[ \rho(z_1A+z_2B)\leq 1 \qquad \text{for all } (z_1,z_2)\in\mathbb T^2, \]
and that every eigenvalue \(\lambda\) of \(z_1A+z_2B\) with
\(|\lambda|=1\) is semisimple, the resolvent, for each fixed
\((z_1,z_2)\in{\mathbb T}^2\),
\[ \bigl(zI-(z_1A+z_2B)\bigr)^{-1} \]
has at most a simple pole at the eigenvalues lying on the unit circle. Hence
there exists a constant \(C(z_1,z_2)>0\) such that
\begin{equation} \label{Kreiss}
	\bigl\| (zI-(z_1A+z_2B))^{-1} \bigr\|
	\leq \frac{C(z_1,z_2)}{|z|-1},
	\qquad |z|>1.
\end{equation}
The next result shows that, if the constant in \eqref{Kreiss} can be chosen uniformly with respect to \((z_1,z_2)\in\mathbb T^2\), then \(\mathcal{S}(A,B)\) is Lyapunov stable for \(\ell^1\)-boundary data. This should be viewed primarily as a resolvent characterization, in the spirit of the classical Kreiss matrix theorem, rather than as a computationally optimal
test. Although the uniform bound in \eqref{Kreiss} may be difficult to verify directly, it provides a useful theoretical counterpart to the spectral criteria of Theorem~\ref{bigTH}.

\begin{theo}
Let $A,B \in \mathcal{M}_n(\C)$.  Suppose that the Kreiss condition holds:
\[  K:= \sup_{(z_1,z_2)\in\mathbb T^2} \sup_{|z|>1} (|z|-1)\, \left\| \bigl(zI-(z_1A+z_2B)\bigr)^{-1} \right\| <\infty.\]
Then \({\cal S}(A,B)\) is Lyapunov stable for \(\ell^1\)-boundary data.
\end{theo}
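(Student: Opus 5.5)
The plan is to turn the uniform resolvent bound into a uniform power bound for the pencil, and then pass to the word sums $W_{r,s}(A,B)$ by taking Fourier coefficients on the torus.

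\emph{Step 1: uniform power bound.} For each fixed $(z_1,z_2)\in\mathbb T^2$, set $M(z_1,z_2)=z_1A+z_2B$. The hypothesis says that $M(z_1,z_2)$ satisfies the Kreiss resolvent condition with constant at most $K$. By the Kreiss matrix theorem, in the sharp form of LeVeque--Trefethen and Spijker, this gives
\[ \|M(z_1,z_2)^k\|_2 \leq e\,n\,K \qquad (k\geq 0). \]
The constant depends only on $n$ and $K$, not on $(z_1,z_2)$. Hence
\[ \sup_{(z_1,z_2)\in\mathbb T^2}\ \sup_{k\geq0}\|(z_1A+z_2B)^k\|\leq enK. \]
I expect this to be the main obstacle. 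The entire argument rests on the Kreiss constant being uniform, and on invoking the theorem with an explicit dimension-dependent constant rather than a qualitative one. I would cite the theorem rather than reprove it. If a self-contained argument is wanted, I would follow the usual route: write $M^k$ as a Cauchy integral of $z^k(zI-M)^{-1}$ over the circle $|z|=1+1/(k+1)$, and integrate by parts entrywise. This yields a constant of order $nK$, which suffices here.

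\emph{Step 2: Fourier coefficients.} Expand $(z_1A+z_2B)^k$ by noncommutative multiplication. This gives
\[ (z_1A+z_2B)^k=\sum_{r+s=k}W_{r,s}(A,B)\,z_1^rz_2^s, \]
which is the same word-counting used in the proof of Theorem~\ref{bigTH}. Orthogonality of monomials on $\mathbb T^2$ then gives, for $k=r+s$,
\[ W_{r,s}(A,B)=\frac{1}{4\pi^2}\int_0^{2\pi}\!\!\int_0^{2\pi}(e^{i\theta_1}A+e^{i\theta_2}B)^{k}\,e^{-ir\theta_1}e^{-is\theta_2}\,d\theta_1\,d\theta_2. \]
Combining this with Step 1 yields the uniform estimate
\[ \|W_{r,s}(A,B)\|\leq enK \qquad \text{for all } r,s\geq 0. \]

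\emph{Step 3: conclusion via the representation formula.} Insert this bound into \eqref{BigFormEqn}. For all $i,j\geq1$ this gives
\[ \|x_{i,j}\|\leq enK\Bigl(\|A\|\sum_{k\geq1}\|x_{0,k}\|+\|B\|\sum_{k\geq1}\|x_{k,0}\|\Bigr). \]
Now suppose both boundary sequences have $\ell^1$ norm less than $\delta$. Then $\sup_{i,j}\|x_{i,j}\|\leq enK(\|A\|+\|B\|)\delta$. Choosing $\delta=\varepsilon/(1+enK(\|A\|+\|B\|))$ gives $\|x_{i,j}\|<\varepsilon$ for all $i,j$, and the boundary values are automatically bounded by $\delta\leq\varepsilon$. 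This is Lyapunov stability in the $\ell^1$-boundary sense. Norm equivalence on $\C^n$ absorbs the constants from replacing $\|\cdot\|_2$ by the chosen norm.
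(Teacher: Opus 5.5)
Your proof is correct. It rests on the same key input as the paper: the Kreiss matrix theorem, applied uniformly over the torus, gives $\sup_{t,k}\|(e^{it}A+B)^k\|<\infty$. The difference is where you apply Fourier inversion.

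The paper does not use the representation formula itself, only the generating-function identity from the proof of Proposition~\ref{BigForm}. It forms the anti-diagonal transforms $S_n(t)=\sum_k x_{k,n-k}e^{ikt}$ and derives the one-dimensional relation $S_n(t)=\sum_{s=1}^n (e^{it}A+B)^{n-s}c_s(t)$. Here the forcing $c_s(t)$ is built from $x_{s,0}-Ax_{s-1,0}$ and $x_{0,s}-Bx_{0,s-1}$. The paper then bounds $\|S_n(t)\|$ uniformly and recovers $x_{k,n-k}$ by Fourier inversion of the solution.

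You invert on the kernel instead. $W_{r,s}(A,B)$ is the $e^{irt}$-coefficient of $(e^{it}A+B)^{r+s}$; your double integral over $\mathbb T^2$ is fine, though one variable suffices. This gives $\sup_{r,s}\|W_{r,s}(A,B)\|\le enK$, and you then read the estimate off Proposition~\ref{BigForm}.

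What each route buys:
\begin{itemize}
\item Your route is slightly shorter.
\item It gives a marginally better constant, with $\|A\|,\|B\|$ in place of $1+\|A\|,1+\|B\|$.
\item It isolates a statement independent of the boundary data: uniform boundedness of the word sums. This is the natural quantity here, since the $\ell^1$-to-pointwise norm of the solution map at $(i,j)$ is controlled by $\sup_k\|W_{i-1,k}A\|$ and $\sup_k\|W_{k,j-1}B\|$.
\item The paper's route avoids the combinatorial word sums and stays in the transfer-function picture $(I-z_1A-z_2B)^{-1}$.
\end{itemize}

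Two minor remarks. First, any version of the Kreiss theorem (for instance Morton's, which the paper cites) gives a power bound depending only on $n$ and $K$. The sharp constant $enK$ is therefore convenient but not what secures uniformity over $(z_1,z_2)$. Second, if $\|\cdot\|$ is not the spectral norm, either convert $K$ by norm equivalence before invoking Spijker's bound, or note that the Cauchy-integral/arc-length argument works for any norm by duality.
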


\begin{proof}
	Define
	\[ S_n(t):=\sum_{k=0}^n x_{k,n-k}e^{ikt}. \]
	By the formal generating-function identity obtained in the proof of
	Proposition 3.1, we have
	\[ X(z_1,z_2) = (I-z_1A-z_2B)^{-1}\bigl(U(z_1)+V(z_2)\bigr) \]
	as an identity of formal power series. Substituting \(z_1=e^{it}z\) and
	\(z_2=z\), and collecting coefficients according to total degree, gives
	\[ \sum_{n\ge0}S_n(t)z^n = \bigl(I-z(e^{it}A+B)\bigr)^{-1} \bigl(U(e^{it}z)+V(z)\bigr). \]
	Write
	\[ U(e^{it}z)+V(z)=\sum_{n\ge1}c_n(t)z^n, \]
	where
	\[ c_n(t) = e^{int}\bigl(x_{n,0}-Ax_{n-1,0}\bigr) + \bigl(x_{0,n}-Bx_{0,n-1}\bigr). \]
	Since the boundary data belong to \(\ell^1\), we have
	\[ \sum_{n\ge1}\|c_n(t)\| \le \sum_{n\ge1}\|x_{n,0}-Ax_{n-1,0}\| + \sum_{n\ge1}\|x_{0,n}-Bx_{0,n-1}\|. \]
	Using \(x_{0,0}=0\), it follows that
	\[ \sum_{n\ge1}\|x_{n,0}-Ax_{n-1,0}\| \le \sum_{n\ge1}\|x_{n,0}\| + \|A\|\sum_{n\ge1}\|x_{n-1,0}\| \le (1+\|A\|)\|\{x_{n,0}\}\|_1, \]
	and similarly
	\[ \sum_{n\ge1}\|x_{0,n}-Bx_{0,n-1}\| \le (1+\|B\|)\|\{x_{0,n}\}\|_1. \]
	Therefore
	\[ \sum_{n\ge1}\|c_n(t)\| \le (1+\|A\|)\|\{x_{n,0}\}\|_1 + (1+\|B\|)\|\{x_{0,n}\}\|_1, \]
	uniformly in \(t\).
	
	By the first theorem in \cite{Mo64}, the Kreiss condition implies that the powers of \(e^{it}A+B\) are uniformly bounded. Hence there exists \(M>0\), independent of \(n\) and \(t\), such that
	\[ \|(e^{it}A+B)^n\|\le M \qquad (n\ge0,\ t\in\mathbb R). \]
	Consequently, for \(|z|<1\),
	\[ \bigl(I-z(e^{it}A+B)\bigr)^{-1} = \sum_{m\ge0}(e^{it}A+B)^m z^m. \]
	Thus
	\[ \sum_{n\ge0}S_n(t)z^n = \left(\sum_{m\ge0}(e^{it}A+B)^m z^m\right) \left(\sum_{s\ge1}c_s(t)z^s\right). \]
	Comparing coefficients gives
	\[ S_n(t) = \sum_{s=1}^n (e^{it}A+B)^{n-s}c_s(t).\]
	Therefore
	\[ \|S_n(t)\| \le M\sum_{s=1}^n\|c_s(t)\| \le M\sum_{s\ge1}\|c_s(t)\|. \]
	Using the estimate above, we obtain
	\[ \|S_n(t)\| \le M\Bigl( (1+\|A\|)\|\{x_{n,0}\}\|_1 + (1+\|B\|)\|\{x_{0,n}\}\|_1 \Bigr), \]
	uniformly in \(n\) and \(t\).
	
	Finally, by Fourier inversion,
	\[ x_{k,n-k} = \frac{1}{2\pi}\int_0^{2\pi} S_n(t)e^{-ikt}\,dt. \]
	Hence
	\[ \|x_{k,n-k}\| \le \frac{1}{2\pi}\int_0^{2\pi}\|S_n(t)\|\,dt \le M\Bigl( (1+\|A\|)\|\{x_{n,0}\}\|_1 + (1+\|B\|)\|\{x_{0,n}\}\|_1 \Bigr). \]
	This estimate is uniform in \(k\) and \(n\). Therefore, given \(\varepsilon>0\),
	choosing \(\delta>0\) such that
	\[ M\bigl((1+\|A\|)+(1+\|B\|)\bigr)\delta<\varepsilon \]
	shows that
	\[ \|x_{i,j}\|<\varepsilon \]
	for all \(i,j\), whenever
	\[ \|\{x_{i,0}\}\|_1<\delta \qquad\text{and}\qquad \|\{x_{0,j}\}\|_1<\delta.\]
	Thus \({\cal S}(A,B)\) is Lyapunov stable for \(\ell^1\)-boundary data.
\end{proof}

\section{Concluding remarks and future directions}

We close by placing the preceding results in the broader context of multidimensional system theory. The examples discussed above should not be regarded as new phenomena. Rather, they illustrate a standard feature of $nD$ systems: stability of multidimensional recursions is not determined by the individual spectra of the coefficient matrices. In the present setting, the relevant object is the matrix pencil
\[ z_1A+z_2B, \qquad |z_1|=1,\quad |z_2| = 1. \]
Thus the examples are included only to connect the criterion of Theorem~\ref{bigTH} with the familiar $nD$ intuition that stability is governed by the joint spectral behavior of the family $z_1A+z_2B$, rather than by $A$ and $B$ separately.

This perspective explains several examples above. In Example~\ref{ex3}, one has $\rho(A)+\rho(B)>1$, while the system is nevertheless asymptotically stable. Hence the inequality $\rho(A)+\rho(B)>1$ is not, by itself, an obstruction to stability. Conversely, Example~\ref{ex1} shows the complementary phenomenon: $A$ and $B$ are both nilpotent, so that $\rho(A)=\rho(B)=0$, yet the corresponding system is unstable. Thus neither large values of $\rho(A)+\rho(B)$ nor small values of the individual spectral radii provide a reliable stability criterion. What matters is the spectral behavior of the full family $z_1A+z_2B$ on the closed bidisk.

Theorem~\ref{bigTH} also shows that this behavior is not merely an artifact of decoupled diagonal examples. Indeed, ${\cal S}(A,B)$ remains asymptotically stable if we replace the matrices in Example~\ref{ex3} by
\[ A=\begin{bmatrix}
	0.2 & \alpha \\
	0 & 0.5 
\end{bmatrix}
\qquad \text{and} \qquad
B=\begin{bmatrix}
	0.6 & \beta\\
	0 & 0.4
\end{bmatrix}, \]
for arbitrary complex numbers $\alpha$ and $\beta$, since $\rho(z_1A+z_2B)$ is independent of $\alpha$ and $\beta$. Thus, for generic $\alpha$ and $\beta$, the matrices $A$ and $B$ do not commute and $\rho(A)+\rho(B)>1$, yet ${\cal S}(A,B)$ is still asymptotically stable. Moreover, the matrices $A$ and $B$ may be replaced by $XAX^{-1}$ and $XBX^{-1}$ for any invertible matrix $X$ without altering stability, and by Corollary~\ref{open}, small perturbations of $A$ and $B$ do not destroy the asymptotic stability of ${\cal S}(A,B)$.

We have presented in this paper a fairly complete picture of the stability of $x=0$ for the system \eqref{S(A,B)}. One aspect that remains incomplete concerns Lyapunov stability in the noncommutative case. In that setting, we derived several sufficient conditions ensuring Lyapunov stability, as well as several sufficient conditions for instability, including criteria based on matrix norms, numerical radius, Lyapunov functions, and the joint spectral radius. However---contrary to the asymptotic stability theory developed here---we do not obtain a necessary and sufficient condition for Lyapunov stability when $A$ and $B$ do not commute. 
In particular, when the spectral conditions SC1 and SC2 hold, the system may be either neutrally stable or unstable, as illustrated by Examples~\ref{AIB0}--\ref{ex non-comm}.
Determining a complete characterization of Lyapunov stability in the noncommutative case appears to be an interesting open problem and is the principal missing ingredient for a complete stability classification of the general Fornasini--Marchesini model.

We have touched only briefly on the difference between $\ell^\infty$ and $c_0$ boundary conditions. Our focus has been on $\ell^\infty$ boundary data, for which Pringsheim convergence is the natural setting. We expect the distinction to be substantial: in the \(c_0\)-boundary framework, the borderline cases that give neutral stability in the \(\ell^\infty\)/Pringsheim setting may behave
quite differently. In particular, the spectral condition in Theorem~\ref{bigTH}(c) remains a natural sufficient condition for asymptotic stability, but it need not be necessary once the admissible boundary data are required to vanish at infinity. We intend to explore the differences between boundary conditions that are bounded and those that vanish at infinity in a future paper.

The system \eqref{S(A,B)} admits several natural extensions. A frequently studied variant incorporates an additional coupling term along the diagonal:
\begin{equation} 	\label{ABC}
	x_{i,j} = A x_{i-1,j} + B x_{i,j-1} + C x_{i-1,j-1}.
\end{equation}
The presence of the diagonal matrix $C$ fundamentally alters the structure of the dynamics. In particular, neither the closed-form representation \eqref{BigFormEqn} for $x_{i,j}$ nor the anti-diagonal recurrence in \eqref{anti} appears to extend in a straightforward manner to \eqref{ABC}. This raises a natural question: can the system \eqref{ABC} be transformed, in a suitable sense, into an instance of \eqref{S(A,B)}, or does the diagonal interaction lead to genuinely new behavior that cannot be captured by the original model?

One may also consider the more general $nD$ system \cite{Ga01} of tensors
\[ x_{i_1, i_2, \dots, i_n } = A_1 x_{i_1-1,i_2,\dots, i_n} + A_2 x_{i_1,i_2-1,\dots,i_n} + \cdots + A_n x_{i_1,i_2,\dots, i_n-1}. \]
A natural extension of Theorem~\ref{bigTH} suggests itself. We conjecture that $x=0$ is asymptotically stable if and only if
\[ \rho(z_1 A_1 + z_2 A_2 + \cdots + z_n A_n) < 1\]
for all $(z_1,\dots,z_n)$ in the $n$th torus $\mathbb{T}^n$. 

Finally, we briefly comment on the relationship with the classical 2D \emph{Roesser} model \cite{Ro75}, which is another standard state-space representation for multidimensional discrete systems. In its homogeneous form, the Roesser model propagates a horizontal state along the $i$-direction and a vertical state along the $j$-direction, leading to coupled recursions of the form
\[ x^h_{i+1,j}=A_{11}x^h_{i,j}+A_{12}x^v_{i,j},  \qquad x^v_{i,j+1}=A_{21}x^h_{i,j}+A_{22}x^v_{i,j}, \]
together with boundary data prescribed along the axes. Although this is not immediately of the form \eqref{S(A,B)}, because the two components are advanced on different indices, it can be embedded into a doubly indexed system of the type studied in this paper by working with an augmented state and an appropriate re-indexing, or lifting, of the lattice dynamics. In this way, stability questions for the Roesser model under bounded boundary data can be reduced to the analysis of an associated system $\mathcal{S}(A,B)$ on a larger state space. Consequently, the spectral criteria and sufficient conditions developed here should yield corresponding verifiable stability conditions for Roesser systems, and provide a direct bridge between the two most commonly used 2D state-space formalisms.

We intend to address these and related questions in future work.

\section*{Acknowledgments}

The authors would like to thank both anonymous referees for their careful reading of the manuscript and for several helpful comments and suggestions that improved the presentation of the paper.

\end{document}